\newtheorem{theorem}{Theorem}[section]
\newtheorem{prop}[theorem]{Proposition}
\newtheorem{cor}[theorem]{Corollary}
\newtheorem{assm}[theorem]{Assumption}
\theoremstyle{remark}
\newtheorem{dfn}[theorem]{Definition}
\newtheorem{remark}[theorem]{Remark}
\def\co{\colon\thinspace}
\def\delbar{\bar{\partial}}
\def\ep{\epsilon}
\def\R{\mathbb{R}}
\def\Z{\mathbb{Z}}
\title{Hofer geometry and cotangent fibers}
\author{Michael Usher}
\address{Department of Mathematics\\University of Georgia\\Athens, GA 30602}
\email{usher@math.uga.edu}
\begin{document}

\begin{abstract}
For a class of Riemannian manifolds that include products of arbitrary compact manifolds with manifolds of nonpositive sectional curvature on the one hand, or with certain positive-curvature examples such as spheres of dimension at least $3$ and compact semisimple Lie groups on the other, we show that the Hamiltonian diffeomorphism group of the cotangent bundle contains as subgroups infinite-dimensional normed vector spaces that are bi-Lipschitz embedded with respect to Hofer's metric; moreover these subgroups can be taken to consist of diffeomorphisms supported in an arbitrary neighborhood of the zero section.  In fact, the orbit of a fiber of the cotangent bundle with respect to any of these subgroups is quasi-isometrically embedded with respect to the induced Hofer metric on the orbit of the fiber under the whole group.  The diffeomorphisms in these subgroups are obtained from reparametrizations of the geodesic flow.  Our proofs involve a study of the Hamiltonian-perturbed Floer complex of a pair of cotangent fibers (or, more generally, of a conormal bundle together with a cotangent fiber).  Although the homology of this complex  vanishes, an analysis of its boundary depth yields the lower bounds on the Lagrangian Hofer metric required for our main results.
\end{abstract}

\maketitle

\section{Introduction}

For a symplectic manifold $(P,\omega)$, let $Ham(P,\omega)$ denote the group of Hamiltonian diffeomorphisms $\phi$ of $P$ which may be obtained as time-one maps of compactly-supported smooth functions $H\co [0,1]\times P\to \R$.  (Thus where $X_{H}(t,\cdot)$ is the time-dependent vector field given by $\omega(\cdot,X_{H}(t,\cdot))=d(H(t,\cdot))$ and where $\{\phi_{H}^{t}\}_{0\leq t\leq 1}$ is defined by $\phi_{H}^{0}=1_P$ and $\frac{d\phi_{H}^{t}}{dt}=X_{H(t,\cdot)}\circ\phi_{H}^{t}$ we have $\phi_{H}^{1}=\phi$).    According to \cite{Ho},\cite{LM}, the ``Hofer norm'' \[ \|\phi\|=\inf\left\{\left.\int_{0}^{1}\left(\max_P H(t,\cdot)-\min_P H(t,\cdot)\right)dt\right|\phi_{H}^{1}=\phi\right\} \] gives rise via the formula $d(\phi,\psi)=\|\phi\circ\psi^{-1}\|$ to a bi-invariant metric $d$ on $Ham(P,\omega)$.  

Despite substantial progress, much remains unknown about the large-scale properties of this metric.  In particular, it is still unknown even whether $d$ is unbounded for every symplectic manifold, though unboundedness has now been established in many cases (see, \emph{e.g.}, \cite{M09},\cite{U11b} and references therein).

In the present note we concentrate on the case where the sympelctic manifold $(P,\omega)$ is the cotangent bundle $(T^*N,d\hat{\theta})$ of a compact smooth manifold $N$, with its standard symplectic structure.  In this case the unboundedness of the Hofer metric has been known at least since  \cite{Mili}, but many more refined questions remain.  We investigate some of these here by considering the geodesic flow on $N$ with respect to a suitable Riemannian metric; in particular, under a Morse-theoretic assumption on the geodesics in $N$, we will exhibit infinite-rank subgroups of $Ham(T^*N,d\hat{\theta})$ which are ``large'' with respect to Hofer's metric.

To set up terminology for one of our main results, if $(N,g)$ is a compact Riemannian manifold, $Q\subset N$ is a compact submanifold, and $x_1\in N\setminus Q$, a \emph{geodesic from $Q$ to $x_1$} is by definition a solution $\eta\co [0,1]\to N$ to the geodesic equation $\nabla_{\eta'}\eta'=0$ subject to the boundary conditions $\eta(0)\in Q$, $\eta'(0)\perp T_{\eta(0)}Q$, and $\eta(1)=x_1$.  Letting $\mathcal{P}_N(Q,x_1)$ denote the space of all piecewise-smooth paths $\eta\co [0,1]\to N$ such that $\eta(0)\in Q$ and $\eta(1)=x_1$, the geodesics from $Q$ to $x_1$ are precisely the critical points of the energy functional $E\co\mathcal{P}_N(Q,x_1)\to\R$ defined by $E(\eta)=\int_{0}^{1}|\eta'(t)|^2dt$.  In particular, any geodesic from $Q$ to $x_1$ has a well-defined Morse index, which will be referred to in Theorem \ref{mainham} below.

Also, let $\R^{\infty}$ denote the direct sum of a collection of copies of $\R$ indexed by $\Z_+$, and for $\vec{a}=\{a_k\}_{k=1}^{\infty}\in \R^{\infty}$ define $osc(\vec{a})=\max_{i,j}|a_i-a_j|$ and $\|\vec{a}\|_{\infty}=\max_i|a_i|$ (these maxima are well-defined since, by definition, any $\vec{a}\in \R^{\infty}$ as all but finitely many $a_i=0$). Obviously one has $\|\vec{a}\|_{\infty}\leq osc(\vec{a})\leq 2\|\vec{a}\|_{\infty}$. We prove:

\begin{theorem}\label{mainham} Let $(N,g)$ be a compact connected Riemannian manifold and suppose that there is a compact submanifold $Q\subset N$, a point $x_1\in N\setminus Q$ which is not a focal point of $Q$, and a homotopy class $\frak{c}\in \pi_0(\mathcal{P}_N(Q,x_1))$ such that: \begin{itemize} \item No geodesics from $Q$ to $x_1$ representing the class $\frak{c}$ have Morse index one. \item Only finitely many geodesics from $Q$ to $x_1$ representing the class $\frak{c}$ have Morse index in $\{0,2\}$.  \item $\dim Q\neq \dim N-2$.  \end{itemize}
 Then for any neighborhood $U$ of the zero section $0_N\subset T^*N$ there is a homomorphism $\Phi\co \R^{\infty}\to  Ham(T^*N,d\hat{\theta})$ such that every diffeomorphism $\Phi(\vec{a})$ has support contained in $U\setminus 0_N$ and such that, for all $\vec{a},\vec{b}\in \R^{\infty}$ \begin{equation}\label{maineq} \|\vec{a}-\vec{b}\|_{\infty}\leq d(\Phi(\vec{a}),\Phi(\vec{b}))\leq osc(\vec{a}-\vec{b}) \end{equation}
\end{theorem}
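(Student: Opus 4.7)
The plan is to construct $\Phi$ from pairwise Poisson-commuting reparametrizations of the geodesic flow, verify the right-hand inequality of \eqref{maineq} by a direct max/min computation, and extract the left-hand inequality from boundary-depth estimates on a Hamiltonian-perturbed Lagrangian Floer complex. For the construction, I would fix a narrow interval $[e_0,e_1]\subset(0,\infty)$ so that the annulus $\{(q,p)\in T^*N:|p|^2_g/2\in[e_0,e_1]\}$ is contained in $U\setminus 0_N$, choose smooth bump functions $\chi_k\co [0,\infty)\to[0,1]$ with pairwise disjoint compact supports in $[e_0,e_1]$, and set $F_k(q,p)=\chi_k(|p|^2_g/2)$. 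Being smooth functions of the kinetic energy, the $F_k$ pairwise Poisson-commute, so $\Phi(\vec a):=\phi^1_{\sum_k a_kF_k}$ is a well-defined group homomorphism $\R^\infty\to Ham(T^*N,d\hat\theta)$ whose image is supported in $U\setminus 0_N$; on each energy level the flow of $\sum_k a_kF_k$ is a reparametrization of the geodesic flow.

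The upper bound follows immediately: bi-invariance of $d$ together with the homomorphism property yields $d(\Phi(\vec a),\Phi(\vec b))=\|\Phi(\vec c)\|$ for $\vec c=\vec a-\vec b$, and since the $F_k$ have disjoint supports taking values in $[0,1]$ while $\vec c$ has zero entries at all but finitely many indices, the autonomous Hamiltonian $H_{\vec c}=\sum c_kF_k$ takes values in $[\min_k c_k,\max_k c_k]$ with both extrema attained; thus $\max H_{\vec c}-\min H_{\vec c}=osc(\vec c)$ and $\|\Phi(\vec c)\|\leq osc(\vec c)$.

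For the lower bound, choose $k_0$ with $|c_{k_0}|=\|\vec c\|_\infty$. Since $x_1\notin Q$, the Lagrangians $\nu^*Q$ and $T^*_{x_1}N$ are transverse, and I would study the Hamiltonian-perturbed Floer complex $CF(\nu^*Q,T^*_{x_1}N;H_{\vec c})$, restricted to the subcomplex generated by Hamiltonian chords that represent the homotopy class $\mathfrak c$ and whose momentum magnitude lies in the support of $\chi_{k_0}$. Because $H_{\vec c}$ is a function of $|p|^2_g/2$, its time-$1$ flow is a reparametrization of geodesic flow on each energy level, and the chord equation translates into the statement that the underlying curve in $N$ is a geodesic from $Q$ to $x_1$ (in class $\mathfrak c$) whose initial speed satisfies a $c_{k_0}$-dependent level-set condition; the generators of the subcomplex are thus in explicit bijection with such geodesics, and their symplectic actions are computable functions of $c_{k_0}$ and the geodesic's length.

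The main obstacle is to extract a lower bound of $|c_{k_0}|$ on the boundary depth of this filtered subcomplex; granting that, the general inequality $\beta(CF)\leq\|H\|$ for Hamiltonian-perturbed Floer complexes, after taking the infimum over Hamiltonians generating $\Phi(\vec c)$, gives $\|\Phi(\vec c)\|\geq|c_{k_0}|=\|\vec c\|_\infty$. The three Morse-theoretic hypotheses combine to produce this boundary-depth estimate: the absence of index-$1$ geodesics in class $\mathfrak c$ rules out Floer differentials that would cancel generators across action-separated clusters; the finiteness of index-$0$ and index-$2$ geodesics makes the relevant subcomplex finite-rank and structurally tractable; and $\dim Q\neq\dim N-2$ ensures that the standard shift relating the Floer/Maslov degree of a chord to the Morse index of its underlying geodesic from $Q$ to $x_1$ is unambiguous. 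The non-compactness of $\nu^*Q$ and $T^*_{x_1}N$ is controlled by the cotangent-bundle maximum principle, which applies because the flow of $H_{\vec c}$ preserves $|p|^2_g$ and confines all Floer trajectories to a fixed compact region.
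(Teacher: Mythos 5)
Your high-level approach---a homomorphism built from commuting reparametrizations of the geodesic flow, the upper bound from the osc of a single autonomous generator, and a lower bound from the boundary depth of the Floer complex $CF(\nu^*Q,T^*_{x_1}N;H)$---matches the paper's. However, there are substantive gaps in the lower-bound argument.

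The claimed ``subcomplex generated by Hamiltonian chords\ldots whose momentum magnitude lies in the support of $\chi_{k_0}$'' does not exist. The Floer differential counts pseudo-holomorphic strips with boundary on $\nu^*Q$ and $T^*_{x_1}N$, and both Lagrangians meet every momentum level; nothing prevents such a strip from traversing the regions where $H_{\vec c}$ vanishes and connecting chords in different shells. (The cotangent-bundle maximum principle you cite only bounds $|p|$ from above; there is no ``minimum principle'' confining trajectories to a narrow annulus.) Thus the low-action cycle you locate in the $k_0$-th shell could \emph{a priori} have primitives anywhere, and one must bound the action of \emph{all} generators of the adjacent Floer grading throughout the complex $CF_{\frak{c}}$. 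The paper does precisely this via the Maslov-index computation of Proposition \ref{gradecompQ} together with Proposition \ref{k-C}: the grading forces candidate primitives to correspond to geodesics of Morse index $k$ or $k+2$ with $f''<0$ (the absence of index-$1$ geodesics and the condition $\dim Q\neq\dim N-2$ serve to eliminate grading cases, not to ``make the degree shift unambiguous''), the finiteness hypothesis bounds the lengths of those geodesics and hence $|f'(r)|$, and then the specific single-bump, controlled-inflection structure of the model function $h$ yields a bound on $f(r)$ in terms of $|f'(r)|$. Your generic $\chi_k$ carry no such structural guarantee, and you explicitly flag this as the ``main obstacle'' without supplying the argument. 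Finally, the two cases $\|\vec c\|_\infty=-\min_k c_k$ and $\|\vec c\|_\infty=\max_k c_k$ are not symmetric: the first produces a low-action cycle whose primitives must be controlled, the second a high-action element of the image whose preimages must be controlled. The paper handles the second case via the duality theorem for boundary depth from \cite{U10}, a step your outline omits.
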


Thus for all manifolds $N$ admitting metrics $g$, submanifolds $Q$, and points $x_1$ as in Theorem \ref{mainham}, the Hamiltonian diffeomorphism group of an arbitrarily small deleted neighborhood $U\setminus 0_N$ of the zero section contains as a subgroup an infinite-dimensional normed vector space which is bi-Lipschitz embedded in $Ham(T^*N,d\hat{\theta})$ with respect to the Hofer metric.  The hypotheses of Theorem \ref{mainham} can at least formally be weakened somewhat (see Assumption \ref{assm} below; the hypotheses of Theorem \ref{mainham} amount to Assumption \ref{assm} being satisfied with $k=0$), though I do not know any examples of manifolds that satisfy Assumption \ref{assm} for some nonzero $k$ but do not also satisfy it for $k=0$.  Let us point out here some  examples of classes of manifolds to which Theorem \ref{mainham} applies; see the discussion following Assumption \ref{assm} for proofs and additional remarks:
\begin{itemize} \item Any compact Riemannian manifold $(N,g)$ of nonpositive sectional curvature (see Proposition \ref{nonpos}).  Indeed, if $\dim N\neq 2$ we can take $Q$ to consist of an arbitrary single point distinct from $x_1$, while if $\dim N=2$ we can take $Q$ to be a closed geodesic not containing $x_1$.
\item Many positively-curved symmetric spaces, including spheres of dimension at least $3$, all compact semisimple Lie groups, and quaternionic Grassmannians (see Proposition \ref{groupsphere} and Remark \ref{curvop}). 
\item All products $N\times N'$, where $N$ is a compact smooth manifold admitting a Riemannian metric $g$, submanifold $Q$, and point $x_1$ as in the hypothesis of Theorem \ref{mainham}, and where $N'$ is \emph{any}  compact smooth manifold (see Remark \ref{product}).

\end{itemize}


In the course of proving Theorem \ref{mainham}, we establish a result concerning the behavior of the Hofer metric with respect to Lagrangian submanifolds of $T^*N$ that is also of interest.  In general, if $(P,\omega)$ is a symplectic manifold and if $S\subset P$ is a closed subset, let \[ \mathcal{L}(S)=\{\phi(S)|\phi\in Ham(P,\omega)\} \] denote the orbit of $S$ under the Hamiltonian diffeomorphism group.  The Hofer norm $\|\cdot\|$ on $Ham(P,\omega)$ induces a $Ham(P,\omega)$-invariant pseudometric $\delta$ on $\mathcal{L}(S)$, via the formula \[ \delta(S_0,S_1)=\inf\{\|\phi\||\phi\in Ham(P,\omega),\,\phi(S_0)=S_1\}.\]  Here we will consider the case where $(P,\omega)=(T^*N,d\hat{\theta})$ and $S$ is the fiber $T_{x_1}^{*}N$ over a point $x_1\in N$ or, more generally, the \emph{conormal bundle} \[ \nu^*Q=\left\{(x,p)\in T^*N|_Q\left| p|_{T_{x}Q}=0\right.\right\}\] of a submanifold $Q\subset N$.


\begin{theorem}\label{mainlag} Let $(N,g)$, $Q\subset N$, and $x_1\in N\setminus Q$ be as in Theorem \ref{mainham}, and let $U\subset T^*N$ be a neighborhood of the zero section $0_N$.  Then there is a linear map \[  F\co \R^{\infty}\to C^{\infty}(T^*N) \] such that each $F(\vec{a})$ has compact support contained in $U\setminus 0_N$, such that for all $\vec{a}\in \R^{\infty}$ \begin{equation}\label{fv} \max F(\vec{a})-\min F(\vec{a})=osc(\vec{a}), \end{equation} and such that, for some constant $C>0$ and for all $\vec{a},\vec{b}\in \R^{\infty}$ we have \begin{equation} \label{hmorph} \phi_{F(\vec{a})}^{1}\circ \phi_{F(\vec{b})}^{1}=\phi_{F(\vec{a}+\vec{b})}^{1},\end{equation}  \begin{equation}\label{lagest} \delta\left(\phi_{F(\vec{a})}^{1}(T_{x_1}^{*}N),\phi_{F(\vec{b})}^{1}(T_{x_1}^{*}N)\right)\geq \|\vec{a}-\vec{b}\|_{\infty}-C \end{equation} and \begin{equation}\label{lagest2} \delta\left(\phi_{F(\vec{a})}^{1}(\nu^*Q),\phi_{F(\vec{b})}^{1}(\nu^*Q)\right)\geq \|\vec{a}-\vec{b}\|_{\infty}-C. \end{equation}
\end{theorem}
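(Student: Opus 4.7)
The proof splits into (a) the direct construction of $F$ yielding (\ref{fv}) and (\ref{hmorph}), and (b) the Floer-theoretic lower bounds (\ref{lagest}) and (\ref{lagest2}), where the real work lies. For (a) I would pick a sequence of pairwise disjoint closed intervals $\{I_k\}_{k\in\Z_+}\subset(0,\infty)$ accumulating at $0$, chosen so that each shell $S_k=\{(x,p)\in T^*N\mid |p|^2/2\in I_k\}$ is contained in $U\setminus 0_N$, together with smooth $h_k\co\R\to[0,1]$ supported in $I_k$, vanishing near $\partial I_k$, and attaining the value $1$; then set
\[
F(\vec{a})(x,p)=\sum_k a_k\, h_k(|p|^2/2).
\]
Linearity of $F$ and compact support inside $U\setminus 0_N$ are immediate. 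Each summand depends only on $|p|^2/2$, so the summands pairwise Poisson-commute, and the autonomous commuting-Hamiltonian identity $\phi_{H+K}^1=\phi_H^1\circ\phi_K^1$ gives (\ref{hmorph}). On $S_k$ we have $F(\vec{a})\in[\min(0,a_k),\max(0,a_k)]$ and $F(\vec{a})\equiv 0$ off $\bigcup_k S_k$; since $\vec{a}$ has only finitely many nonzero entries, the value $0$ belongs to the multiset $\{a_i\}_{i\in\Z_+}$ used to define $osc(\vec{a})$, so $\max F(\vec{a})-\min F(\vec{a})=\max(0,\max_k a_k)-\min(0,\min_k a_k)=osc(\vec{a})$, yielding (\ref{fv}).

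For (b), Hamiltonian invariance of $\delta$ combined with (\ref{hmorph}) reduces both inequalities to showing, for $\vec{c}=\vec{b}-\vec{a}$ and $L\in\{T_{x_1}^*N,\nu^*Q\}$, that
\[
\delta(L,\phi_{F(\vec{c})}^{1}(L))\geq\|\vec{c}\|_\infty-C.
\]
I would establish this via the Hamiltonian-perturbed Lagrangian Floer complex $CF(\nu^*Q,T_{x_1}^*N;K)$ with an autonomous base Hamiltonian $K(x,p)=\kappa(|p|^2/2)$ chosen so that $\phi_K^1$ is a suitable reparametrization of the geodesic flow and the Hamiltonian chords of $K$ from $\nu^*Q$ to $T_{x_1}^*N$ are in bijection with the geodesics from $Q$ to $x_1$ in the class $\frak{c}$, with Maslov grading agreeing with Morse index up to a shift that is integral exactly because $\dim Q\neq\dim N-2$. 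The remaining two hypotheses then translate into: no degree-$1$ generators, finitely many in degrees $\{0,2\}$, and (possibly) infinitely many of degree $\geq 3$.

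The central estimate is the boundary-depth calculation for $CF(\nu^*Q,T_{x_1}^*N;K+F(\vec{c}))$. Since $K$ and $F(\vec{c})$ Poisson-commute, $\phi_{K+F(\vec{c})}^1=\phi_K^1\circ\phi_{F(\vec{c})}^1$, and standard Lagrangian Hofer--Floer estimates (together with a comparison between compactly supported and quadratic-at-infinity Hamiltonians, contributing only a $\vec{c}$-independent additive error) yield $\|H\|\geq \beta-\mathrm{const}$ for any compactly supported $H$ with $\phi_H^1(L)=\phi_{F(\vec{c})}^1(L)$, where $\beta$ is the boundary depth of the perturbed complex. The action of a $K$-chord lying in $S_k$ is shifted by a positive multiple of $c_k$ under the perturbation, so the action filtration on the degree-$\geq 3$ part of the complex spreads over the full interval $[\min_k c_k,\max_k c_k]$; since the Floer homology vanishes (as asserted in the abstract), every generator must eventually cancel under $\partial$, forcing $\beta\geq\|\vec{c}\|_\infty-C$ modulo a bounded error coming from the finitely many low-degree generators.

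The main obstacle is this last boundary-depth estimate. The no-index-$1$ hypothesis is precisely what prevents Floer differentials from connecting the infinite-dimensional high-degree block to the low-degree block through a degree-$1$ intermediary, so the action spread picked up from the perturbation cannot leak into the finite-dimensional low-degree part; meanwhile, the finiteness assumption on index-$\{0,2\}$ generators ensures that whatever cancellations involve the low block contribute only a bounded error independent of $\vec{c}$. Carrying this out rigorously requires an explicit analysis of how the Floer action functional transforms under the commuting autonomous perturbation $F(\vec{c})$ and a careful verification that the filtered quasi-isomorphism type of the complex behaves as dictated by the index hypotheses—this is the technical content promised in the abstract.
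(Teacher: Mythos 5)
Your part (a) construction of $F$ is essentially sound and close in spirit to the paper's, though the paper is considerably more rigid: it builds $f_{\vec{a}}$ from dyadic rescalings of a single bump $h$ whose second derivative is controlled to change sign at precisely specified points, and this rigidity is not cosmetic but is used directly in the action bounds below. Your part (b), however, has two significant gaps.

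First, you never carry out the grading computation, and this is where the entire weight of the argument lies. The Floer-theoretic degree of a generator $\gamma_{(r,y)}$ of $CF_{\frak{c}}(\nu^*Q,T_{x_1}^*N;H_{f_{\vec{a}}})$ is not simply ``Morse index up to a fixed shift''; Proposition~\ref{gradecompQ} shows it depends on the Morse index of the underlying geodesic \emph{and} on the signs of $f_{\vec{a}}'(r)$ and $f_{\vec{a}}''(r)$, taking four different forms accordingly. The hypotheses on geodesics of index $1$, $\{0,2\}$ therefore translate into constraints on Floer generators only after this four-case analysis, and it is precisely the observation that every generator in degree $d-k\pm 1$ must have $f_{\vec{a}}''(r)<0$ (and hence, by the controlled shape of $h$, action bounded below by a $\vec{a}$-independent constant) that makes the argument work. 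Your framework of a base $K$ plus a commuting perturbation $F(\vec{c})$ obscures this: the generators of the Floer complex are not ``$K$-chords with shifted action''---the generating set itself changes with $\vec{c}$, because a point $(r,y)$ is a generator only when $f_{\vec{a}}'(r)$ hits a geodesic length, which is a condition on $\vec{a}$.

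Second, your concluding step---``the action filtration spreads over the full interval, so since the homology vanishes every generator must eventually cancel, forcing $\beta\geq\|\vec{c}\|_\infty-C$''---is not a boundary-depth argument. A large spread of actions in the complex does not by itself force large boundary depth; one must exhibit a specific cycle of very low action all of whose primitives have uniformly bounded action. The paper does this (Propositions~\ref{deep} and~\ref{best}) by producing a generator in degree $d-k$ with action $\leq\min_i a_i$, showing it is a cycle because no generators in degree $d-k-1$ have comparable action, and then showing every candidate primitive in degree $d-k+1$ has action $\geq -C$. You also do not address the case $\|\vec{a}\|_\infty=\max_i a_i$, which in the paper requires passing to the opposite complex via the duality result of \cite{U10}, nor the separate duality needed to get from the boundary depth of $CF(\nu^*Q,T_{x_1}^*N;\cdot)$ to the estimate (\ref{lagest2}) for $\nu^*Q$.
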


In view of (\ref{fv}), (\ref{hmorph}), and the $Ham$-invariance of $\delta$ we also have \[ \delta\left(\phi_{F(\vec{a})}^{1}(T_{x_1}^{*}N),\phi_{F(\vec{b})}^{1}(T_{x_1}^{*}N)\right)\leq\|\phi_{F(\vec{b}-\vec{a})}^{1}\|\leq osc(\vec{a}-\vec{b})\] and likewise with $\nu^*Q$ in place of $T_{x_1}^{*}N$.  Thus Theorem \ref{mainlag} gives quasi-isometric embeddings of $\R^{\infty}$ into the metric\footnote{The fact that $\delta$ defines a metric, and not just a pseudometric, on $\mathcal{L}(T_{x_1}^{*}N)$ and $\mathcal{L}(\nu^*Q)$ follows from \cite[Remark 4.12]{U12}.} spaces $(\mathcal{L}(T_{x_1}^{*}N),\delta)$ and $(\mathcal{L}(\nu^*Q),\delta)$.  


Theorem \ref{mainlag} quickly implies Theorem \ref{mainham}, as we now show:

\begin{proof}[Proof of Theorem \ref{mainham}, assuming Theorem \ref{mainlag}]  Given $F\co \R^{\infty}\to C^{\infty}(T^*N)$ as in Theorem \ref{mainlag} define $\Phi\co \R^{\infty}\to Ham(T^*N,d\hat{\theta})$ by $\Phi(\vec{a})=\phi_{F(\vec{a})}^{1}$.  By (\ref{hmorph}), $\Phi$ is a homomorphism.    Moreover \[ d(\Phi(\vec{a}),\Phi(\vec{b}))=\|\phi_{F(\vec{a}-\vec{b})}^{1}\|\leq osc(\vec{a}-\vec{b}) \] by (\ref{hmorph}) and (\ref{fv}), proving the upper bound in (\ref{maineq}).  

As for the lower bound, since $\Phi$ is a homomorphism we just need to show that, for all $\vec{a}\in \R^{\infty}$, $\|\Phi(\vec{a})\|\geq \|\vec{a}\|_{\infty}$.  Suppose that this is false, and let $\ep>0$ be such that $\|\Phi(\vec{a})\|\leq \|\vec{a}\|_{\infty}-\ep$.  Where $C$ is the constant in Theorem \ref{mainlag}, choose $m\in \Z_+$ such that $m\ep>C$.  Then since $\Phi$ is a homomorphism and since the Hofer norm satisfies the triangle inequality we would have $\|\Phi(m\vec{a})\|\leq m(\|\vec{a}\|_{\infty}-\ep)<\|m\vec{a}\|_{\infty}-C$.  But this contradicts the fact that, by (\ref{lagest}), $\delta(\Phi(m\vec{a})T_{x_1}^{*}N,T_{x_1}^{*}N)\geq \|m\vec{a}\|_{\infty}-C$.
\end{proof}

\subsection{Related work}
  Theorem \ref{mainlag} is the first result known to the author about the Hofer geometry of the orbit space $\mathcal{L}(T_{x_1}^{*}N)$ of a cotangent fiber under the Hamiltonian diffeomorphism group.  However there are several prior results about the Hofer metric on $Ham(T^*N,d\hat{\theta})$ which imply conclusions closely related to those of Theorem \ref{mainham}, especially in the case that $(N,g)$ has nonpositive sectional curvature.
  
    First of all, \cite{Mili} contains results about the orbit $\mathcal{L}(0_N)$ of the zero section of a the cotangent bundle of an arbitrary compact smooth manifold, nicely complementing our results about the orbits of fibers or of certain conormal bundles.  Namely, \cite[Proposition 1(5,6)]{Mili} shows that, for any compact $N$, the map $C^{\infty}(N)\to \mathcal{L}(0_N)$ which assigns to a smooth function $S\co N\to \R$ the Lagrangian submanifold $graph(dS)$ induces an isometric embedding $(\frac{C^{\infty}(N)}{\R},osc)\to \mathcal{L}(0_N)$, where we write $osc([f])=\max f-\min f$.  Using this, one easily obtains isometric embeddings that map arbitrarily large balls in the normed vector space $(C^{\infty}(N)/\R,osc)$ into $Ham(T^*N,d\hat{\theta})$.  However, in view of the fact that $Ham(T^*N,d\hat{\theta})$ contains only compactly supported diffeomorphisms, it does not seem possible to obtain embeddings of a whole vector space using such a construction; moreover, in order to embed large balls one must use diffeomorphisms with large supports, in contrast to Theorem \ref{mainham} where the supports are fixed.

In \cite{Py08}, Py showed that, whenever a symplectic manifold $(P,\omega)$ contains a $\pi_1$-injective compact Lagrangian submanifold $L$ which admits a Riemannian metric of nonpositive sectional curvature, for any integer $k$ there exists a bi-Lipschitz embedding of $\R^k$ into $Ham(M,\omega)$ (with the relevant constants diverging to $\infty$ with $k$), whose image moreover consists of diffeomorphisms which may be taken to have support in an arbitrary deleted neighborhood of $L$.  Thus in the case that $P=T^*N$ and $L$ is the zero section, Theorem \ref{mainham} represents an improvement on Py's result both in that the embedded subgroups are infinite-dimensional and in that the class of manifolds $N$ to which the theorem applies is more general than just those of nonpositive curvature.  In the case that $P$ is instead compact, a similar improvement of Py's result is a special case of \cite[Theorem 1.1]{U11b}.

More recently, in \cite[Theorem 1.13]{MVZ} it was shown that if a compact manifold $N$ admits a nonsingular closed one-form then $Ham(T^*N,d\hat{\theta})$ admits an isometric embedding of $(C^{\infty}_{c}((0,1)),osc)$.  

Finally, L. Polterovich has pointed out to the author a way of getting a conclusion closely related to that of Theorem \ref{mainham} in the special case that $(N,g)$ has nonpositive curvature (for similar arguments in the contact context see \cite[Examples 2.8-10]{FPR}).  Namely, if for $r>0$ we let $S(r)^{*}N$ denote the radius-$r$ cosphere bundle, one can see from  \cite[Theorem 2.7(iii)]{Gi} that $S(r)^{*}N$ is stably nondisplaceable (since due to the curvature hypothesis $N$ has no contractible closed geodesics).  Then arguing as in \cite[Proposition 7.1.A]{Po} one can show that for any compactly supported smooth function $H\co T^*N\to \R$, the Hofer (pseudo-)norm $\|\tilde{\phi}_H\|$ of the corresponding element $\tilde{\phi}_H$ in the \emph{universal cover} $\widetilde{Ham}(T^*N,d\hat{\theta})$ is greater than or equal to $\max_{S(r)^{*}N}|H|$.  One can then let $r$ vary and consider Hamiltonians of the form $H_f=f\circ \rho$ where $\rho$ is the fiberwise norm on $T^*N$ and where $f\co (0,\infty)\to \R$ is a compactly-supported smooth function.  The assignment $f\mapsto \tilde{\phi}_{H_f}$ is then an embedding  $C^{\infty}_{c}((0,\infty))\hookrightarrow\widetilde{Ham}(T^*N,d\hat{\theta})$ which obeys an estimate analogous to (\ref{maineq}).    In general, it does not seem clear from this argument whether the lower bound in this estimate survives after one projects down from $\widetilde{Ham}(T^*N,d\hat{\theta})$ to $Ham(T^*N,d\hat{\theta})$, though in some isolated cases such as where $N$ is the torus this could likely be deduced along the lines of \cite[Example 2.8]{FPR}.  Also, we should mention that the diffeomorphisms that appear in the image of our embedding $\Phi$ in Theorem \ref{mainham} have the form $\phi_{H_f}$ for certain functions $f$; thus the proof of Theorem \ref{mainham} will show that Polterovich's estimate in the universal cover does survive after projection at least for these special choices of $f$.

\subsection{Summary of the proof}

Our proof of Theorem \ref{mainlag} is based on the properties of the filtrations of the Floer complexes associated to certain pairs of noncompact Lagrangian submanifolds in Liouville manifolds; in our case the Liouville manifold is a cotangent bundle $T^*N$ and the Lagrangian submanifolds are cotangent fibers $T^{*}_{x_1}N$ or, a bit more generally, conormal bundles $\nu^*Q$ of compact submanifolds $Q\subset N$.    Thus if $x_1\notin Q$, for any suitably nondegenerate, compactly supported Hamiltonian $H\co [0,1]\times T^*N\to \R$ we have a Floer complex
$CF(\nu^*Q,T_{x_1}^{*}N;H)$ whose generators correspond to points of $\nu^*Q\cap (\phi_{H}^{1})^{-1}(T_{x_{1}}^{*}N)$.   

Since we are considering compactly supported Hamiltonians (unlike the situation with the ``wrapped'' Floer complexes of, \emph{e.g.}, \cite{AS}), the homology of this Floer complex is trivial, as $\nu^*Q\cap T_{x_1}^{*}N=\varnothing$.    Despite this triviality, we can still obtain significant information (depending, unlike the homology, on the Hamiltonian $H$) from the Floer complex, by means of the \emph{boundary depth}, a quantity that was introduced in a similar context in \cite{U11a} and is denoted here by $B(\nu^*Q,(\phi_{H}^{1})^{-1}(T_{x_{1}}^{*}N))$. The boundary depth gives a quantitative measurement of the nontriviality of the Floer boundary operator with respect to the natural filtration on the Floer complex: it is the smallest nonnegative number $\beta$ such that every element $c$ of the image of the boundary operator has a primitive whose filtration level is at most $\beta$ larger than that of $c$.  As we show in Section \ref{gens} (by arguments quite analogous to ones appearing in \cite[Section 6]{U11b} in the compact case), the boundary depth $B(\nu^*Q,\Lambda)$ is independent of the choice of $H$ with $(\phi_{H}^{1})^{-1}(T_{x_1}^{*}N)=\Lambda$ and moreover, considered as a function of $\Lambda$, is $1$-Lipschitz with respect to the Hofer distance $\delta$ on $\mathcal{L}(T_{x_1}^{*}N)$.  Consequently lower bounds for the boundary depth can give lower bounds on $\delta$ of the sort that appear in (\ref{lagest}).

Thus we prove Theorem \ref{mainlag} by constructing a homomorphism \begin{align*} \R^{\infty}&\to Ham(T^*N,d\hat{\theta}) \\ \vec{a}&\mapsto \phi_{F(\vec{a})}^{1} \end{align*} where $osc(F(\vec{a}))=osc(\vec{a})$ for all $\vec{a}$ while $B(\nu^*Q,(\phi_{F(\vec{a})}^{1})^{-1}(T_{x_{1}}^{*}N))\geq \|a\|_{\infty}-C$ for a constant $C$.  For $F(\vec{a})$ we use a function of the form $f_{\vec{a}}\circ \rho$, where $\rho\co T^*N\to \R$ is the fiberwise norm given by our Riemannian metric $g$ on $N$ and $f_{\vec{a}}\co [0,\infty)\to \R$ is a function whose graph is as in Figure \ref{figure}, with the heights of its minima and maxima given by the coordinates $a_i$ of $\vec{a}$. (More detail on the functions $f_{\vec{a}}$ is given at the start of Section \ref{compute}; our argument requires them to take a rather special form, though this constraint is likely only technical.)
Recall that the function $\rho^2$ on $T^*N$ has Hamiltonian flow given by the geodesic flow of $g$; consequently $f_{\vec{a}}\circ \rho$ will restrict to each cosphere bundle $S(r)^{*}N$ as the time-one map of a reparametrization of the geodesic flow.  A point $(x,p)$ will lie in $\nu^*Q\cap (\phi_{F(\vec{a})}^{1})^{-1}(T_{x_{1}}^{*}N)$ if and only if $x\in Q$, $p\in T_{x}^{*}N$ vanishes on $T_x Q$, and $\pm p$ is  the initial momentum of a geodesic of length $\pm f_{\vec{a}}'(|p|)$ which begins at $x$ and ends at $x_1$.

\begin{figure}\label{figure}
\centering
\includegraphics[width=3.25in]{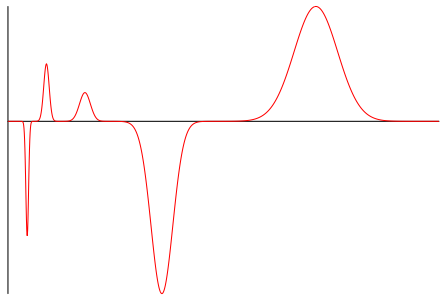}
\caption{The graph of a typical function $f_{\vec{a}}$.  In this case $\vec{a}=(4,-6,1,2,-4,0,0,\ldots)\in \R^{\infty}$, with the coordinates of $\vec{a}$ equal to the heights of the extrema of $f_{\vec{a}}$ as one moves from right to left.}
\end{figure}

Thus a geodesic $\eta$ in $N$ which starts orthogonally to $Q$ and ends at $x_1$ gives rise to several generators for the Floer complex 
$CF(\nu^*Q,T_{x_1}^{*}N;F(\vec{a}))$, one for each value of $r$ such that $|f_{\vec{a}}'(r)|$ is equal to the length of $\eta$.  In order for two generators of the complex to be intertwined by the Floer boundary operator, their corresponding geodesics must be homotopic (through paths from $Q$ to $x_1$), and their Floer-theoretic gradings must differ by $1$.  We compute the Floer-theoretic grading in  Proposition \ref{gradecompQ}; this grading is found to depend in an explicit way on the Morse index of the geodesic $\eta$ and on the signs of both $f_{\vec{a}}'(r)$ and $f_{\vec{a}}''(r)$.
Meanwhile the action of the generator is approximately $f_{\vec{a}}(r)$.

Provided that $(-\min_i a_i)$ is sufficiently large, there will be a generator $c_0$ of $CF(\nu^*Q,T_{x_1}^{*}N;F(\vec{a}))$ whose associated geodesic from $Q$ to $x_1$ has index zero and whose associated value of $r$ obeys $f_{\vec{a}}'(r),f_{\vec{a}}''(r)>0$, such that the action of $c_0$ is approximately $\min_i a_i$.  Under Assumption \ref{assm} (as applies in Theorem \ref{mainlag}), this generator will be a cycle in the Floer complex.  So since the Floer homology vanishes, $c_0$ must also be a boundary.  But Assumption \ref{assm} also implies that the only generators with grading $1$ larger than that of $c_0$ have the property that their associated value of $r$ obeys $f_{\vec{a}}''(r)<0$, with $|f'_{\vec{a}}(r)|$ bounded above independently of $\vec{a}$.  The special form of our functions $f_{\vec{a}}$ then implies that $f_{\vec{a}}(r)$ is bounded below independently of $\vec{a}$.  Thus all possible primitives of $c_0$ have action bounded below by a constant, whereas $c_0$ has action approximately $\min_i(a_i)$.  This leads to an estimate $B(\nu^*Q,(\phi_{F(\vec{a})}^{1})^{-1}(T_{x_{1}}^{*}N))\geq -\min_i(a_i)-C$.  A similar argument, together with a duality property satisfied by the boundary depth, implies that $B(\nu^*Q,(\phi_{F(\vec{a})}^{1})^{-1}(T_{x_{1}}^{*}N))\geq \max_i(a_i)-C$.  Combining these two estimates quickly implies (\ref{lagest}).  Another duality argument yields the similar estimate (\ref{lagest2}) and hence completes the proof of Theorem \ref{mainlag}.

We remark that for many cases of Theorem \ref{mainlag} it would suffice to study Floer complexes of the form $CF(T_{x_0}^{*}N,T_{x_1}^{*}N;H)$ (\emph{i.e.}, to set $Q$ equal to a single point).  However, when $\dim N-\dim Q=2$ one finds from the grading formula in Proposition \ref{gradecompQ} that the argument indicated in the previous paragraph breaks down, and so in particular one cannot set $Q$ equal to a singleton in the case that $N$ is two-dimensional.  So when $N$ is a surface of nonpositive curvature we instead set $Q$ equal to a closed geodesic on $N$ that does not contain $x_1$. Of course, incorporating positive-dimensional $Q$ into our discussion also allows us to obtain the more general results on the Hofer geometry of $\mathcal{L}(\nu^*Q)$ mentioned in (\ref{lagest2}); furthermore, allowing positive-dimensional $Q$ in the hypothesis of Theorem \ref{mainham} leads to the hypothesis being preserved under products with arbitrary compact manifolds, as noted in Remark \ref{product}.

\subsection{Outline of the paper}  In the following Section \ref{gens} we set up the general framework for the Floer complex associated to a Hamiltonian function and a pair of suitable noncompact Lagrangian submanifolds of a Liouville manifold, and we define the boundary depth associated to this complex and prove the basic result Corollary \ref{lip} connecting the boundary depth to Hofer geometry.  Section \ref{geodesic} specializes the discussion to the case in which the Liouville manifold is the cotangent bundle $T^*N$ of a Riemannian manifold, the Hamiltonian is a function of the norm given by the Riemannian metric, and the pair of the Lagrangian submanifolds consists of a conormal bundle $\nu^*Q$ and a cotangent fiber $T_{x_1}^{*}N$.  In this case the Floer complex can be understood in terms of geodesics from $Q$ to $x_1$.  For our main results it is important to work out the grading on the complex, as we do in detail in Section \ref{grading}; this is somewhat easier when $Q$ consists of a single point (which as mentioned earlier suffices for many cases of Theorem \ref{mainlag}), so we treat that case initially, followed by a discussion of the necessary modifications in the more general case.  Next, in Section \ref{assumption}, we introduce Assumption \ref{assm} on the behavior of geodesics connecting $Q$ to $x_1$ and discuss some situations in which it holds. This assumption is a somewhat more general version of the hypotheses of Theorems \ref{mainham} and \ref{mainlag}, and is the most general context in which we are able to prove such results.

Finally, in Section \ref{compute}, we introduce a special class of Hamiltonians for which the grading computation of Section \ref{grading} allows us to get strong lower bounds on the boundary depth under Assumption \ref{assm}, and we prove these lower bounds.  From these lower bounds we deduce Corollary \ref{maincor}, which has Theorem \ref{mainlag} as a special case, thus completing the proofs of our main results.

\subsection*{Acknowledgements} I am grateful to Leonid Polterovich for asking me the question which motivated this work and for various helpful comments.  The work was supported by NSF grant DMS-1105700.


  
  


\section{Generalities on the Floer complex and the boundary depth}\label{gens}
Let $(M,\theta)$ be a Liouville domain; thus $M$ is a compact manifold with boundary, with $\theta\in \Omega^1(M)$ such that $d\theta$ is symplectic, and the vector field $Z$ characterized by the property that $\iota_Zd\theta=\theta$ points outward along $\partial M$.  This implies that $\alpha:=\theta|_{\partial M}$ is a contact form. If for $t\geq 0$ we denote by $\zeta^t\co M\to M$ the time-$t$ flow of the vector field $-Z$, then for some $\ep_0>0$ the map $\Psi\co(1-\ep_0,1]\times \partial M\hookrightarrow M$ defined by $\Psi(r,m)=\zeta^{-\ln r}(m)$  is an embedding with the property that $\Psi^*\theta=r\alpha$.  Then the Liouville completion $(\hat{M},\hat{\theta})$ may be defined as \[ \hat{M}=\frac{M\cup \left((1-\ep_0,\infty)\times \partial M\right)}{m\sim \Psi^{-1}(m)\mbox{ for }m\in Im(\Psi)} \qquad \hat{\theta}|_M=\theta,\,\hat{\theta}|_{(1-\ep_0,\infty)\times\partial M}=r\alpha.\]  In particular $(\hat{M},d\hat{\theta})$ is a symplectic manifold.

Hereinafter we will identify $(1-\ep_0,1]\times \partial M$ with a subset of $M$; it should be understood that we are using $\Psi$ to make this identification.

\begin{dfn} A \emph{filling Lagrangian} in $(M,\theta)$ is a compact $\frac{\dim M}{2}$-dimensional submanifold $L$ of $M$ with boundary $\partial L$, such that: \begin{itemize} \item[(i)] $\partial L\subset \partial M$.
\item[(ii)] For some $\ep_L$ with $0<\ep_L<\ep_0$ we have (with respect to $\Psi$) \[ L\cap \left([1-\ep_L,1]\times\partial M\right)=[1-\ep_L,1]\times \partial L \]
\item[(iii)] For some smooth function $h\co L\to \R$ which vanishes to infinite order along $\partial L$, we have $\theta|_{L}=dh$.
\end{itemize}
\end{dfn}

Of course, (iii) implies that $L$ is an exact Lagrangian submanifold, and (ii) and (iii) together imply that in fact $\theta|_{L\cap \left([1-\ep_L,1]\times\partial M\right)}=0$.  Any filling Lagrangian $L\subset M$ gives rise to a properly embedded, exact Lagrangian submanifold without boundary $\hat{L}\subset \hat{M}$, namely  \[ \hat{L}=L\cup([1,\infty)\times \partial M).\]
We will refer to $\hat{L}$ as the \emph{completion} of $L$.

 The standard example of a filling Lagrangian occurs when $M=D^*N$ is the disk cotangent bundle of some smooth manifold $N$ and $\theta$ is the tautological one-form; then we may take $L$ equal to the disk conormal bundle of any compact submanifold of $N$.  (Indeed in this case $\theta|_L=0$.) The name ``filling Lagrangian'' refers to the fact that $L$ provides what is sometimes called a filling for the Legendrian submanifold $\partial L\subset \partial M$.
 
The results of this paper make use of a Floer complex $\left(CF_{\frak{c}}(\hat{L}_0,\hat{L}_1;H),\partial_{\mathbb{J}}\right)$ that may be associated to data of the following sort:

\begin{itemize} \item[(I)] Completions $\hat{L}_0,\hat{L}_1\subset \hat{M}$ of a pair of filling Lagrangians $L_0,L_1\subset M$ such that $\partial L_0\cap \partial L_1=\varnothing$.
\item[(II)] A smooth, compactly supported function $H\co [0,1]\times \hat{M}\to \R$, such that the time-one map $\phi_{H}^{1}$ of the Hamiltonian flow of $H$ has the property that $\phi_{H}^{1}(\hat{L}_0)$ is transverse to $\hat{L}_1$.
\item[(III)] An element $\frak{c}\in \pi_0(\mathcal{P}(\hat{L}_0,\hat{L}_1))$, where $\mathcal{P}(\hat{L}_0,\hat{L}_1)$ is the space of piecewise-smooth paths $\gamma\co [0,1]\to\hat{M}$ such that $\gamma(0)\in \hat{L}_0,\gamma(1)\in \hat{L}_1$.
\item[(IV)] A suitably generic path $\mathbb{J}=\{J_t\}_{t\in [0,1]}$ of $d\hat{\theta}$-compatible almost complex structures on $\hat{M}$ such that, for some real number $R_{\mathbb{J}}>1-\ep_0$, we have $J_t=J_0$ for all $t$ on $[R_{\mathbb{J}},\infty)\times \partial M$, and  $\alpha\circ J_0=dr$ on $[R_{\mathbb{J}},\infty)\times \partial M$.
\end{itemize}  

Given $\hat{L}_0,\hat{L}_1,$ and $\frak{c}\in \pi_0(\mathcal{P}(\hat{L}_0,\hat{L}_1))$, fix (independently of $H$ and $J$) a path $\gamma_{\frak{c}}\co [0,1]\to \hat{M}$ which represents the class $\frak{c}$,  and fix a symplectic trivialization $\tau_{\frak{c}}\co \gamma_{\frak{c}}^{*}TM\to [0,1]\times \R^{2n}$ which sends $T_{\gamma(0)}\hat{L}_0$ to $\{0\}\times\R^n\times\{\vec{0}\}$ and $T_{\gamma(1)}\hat{L}_1$ to $\{1\}\times\R^n\times\{\vec{0}\}$.  Formally speaking, $\left(CF_{\frak{c}}(L_0,L_1;H),\partial_{\mathbb{J}}\right)$ is the Morse complex of the function $\mathcal{A}_H\co \frak{c}\to \R$ (where again $\frak{c}$ is a path component of $\mathcal{P}(\hat{L}_0,\hat{L}_1)$), defined by \[ \mathcal{A}_H(\gamma)=-\int_{[0,1]^{2}}u^*d\hat{\theta}+\int_{0}^{1}H(t,\gamma(t)) dt \] where $u\co [0,1]^{2}\to \hat{M}$ is a smooth map with $u(0,t)=\gamma_{\frak{c}}(t)$, $u(1,t)=\gamma(t)$, $u(s,0)\in \hat{L}_0$, and $u(s,1)\in \hat{L}_1$ for all $s,t\in [0,1]$ (the fact that the $\hat{L}_i$ are exact makes $\mathcal{A}_H$ independent of the choice of such $u$).   A critical point $\gamma$ of $\mathcal{A}_H$ has $\gamma'(t)=X_H(t,\gamma(t))$ for all $t$; thus critical points of $\mathcal{A}_H$ are in natural bijection with intersection points $\gamma(0)\in \hat{L}_0\cap (\phi_{H}^{1})^{-1}(\hat{L}_1)$.  Since $\phi_{H}^{1}$ is equal to the identity outside of a compact set and since $\hat{L}_0\cap \hat{L}_1\cap ([1,\infty)\times\partial M)=\varnothing$, the assumed transversality of $\phi_{H}^{1}(\hat{L}_0)$ and $\hat{L}_1$ implies that there are only finitely many such points $\gamma(0)$, and so only finitely many elements $\gamma$ in the critical locus $Crit(\mathcal{A}_H)$ of $\mathcal{A}_H$.

  There is a homomorphism $\mu_{\frak{c}}\co \pi_1(\frak{c},\gamma_{\frak{c}})\to \Z$ which assigns to the homotopy class of a smooth map $u\co (S^1\times [0,1],S^1\times\{0\},S^1\times\{1\})\to (\hat{M},\hat{L}_0,\hat{L}_1)$ the difference of the Maslov indices of the loops of Lagrangian subspaces $(u|_{S^1\times\{1\}})^{*}T\hat{L}_{1}$ and $(u|_{S^1\times\{0\}})^{*}T\hat{L}_{0}$ with respect to an arbitrary symplectic trivialization of $u^*T\hat{M}$.  Let $N_{\frak{c}}\in \Z$ be the positive generator of the image of $\mu_{\frak{c}}$ (if $\mu_{\frak{c}}=0$---as will be the case in our main application---we set $N_{\frak{c}}=0$).

Given these data, to each $\gamma\in Crit(\mathcal{A}_H)$ representing the class $\frak{c}$ we may associate a Maslov-type  index $\mu(\gamma)\in \Z/N_{\frak{c}}\Z$.  Namely, choose an arbitrary piecewise $C^1$ map $v\co [0,1]^2\to \hat{M}$ such that $v(0,t)=\gamma_{\frak{c}}(t)$, $v(1,t)=\gamma(t)$, $v(s,0)\in\hat{L}_0$, and $v(s,1)\in \hat{L}_1$ for all $s,t\in[0,1]$, and choose a symplectic trivialization of $v^*TM$ which restricts over $\{0\}\times[0,1]$ to the trivialization $\tau_{\frak{c}}$ and which identifies each $T_{v(s,i)}\hat{L}_i$ with $\R^{n}\times\{\vec{0}\}$.  With respect to this trivialization, the path $\Gamma(t)=(\phi_{H}^{t})_{*}T_{\gamma(0)}\hat{L}_0$ is a path in the Lagrangian Grassmannian such that $\Gamma(0)=\R^n\times\{\vec{0}\}$ and $\Gamma(1)$ is transverse to $\R^n\times\{\vec{0}\}$.  We let \begin{equation}\label{gradeconv} \mu(\gamma)=\frac{n}{2}-\mu_{RS}(\Gamma,\R^n\times\{\vec{0}\}) \mod N_{\frak{c}}\end{equation} where  $\mu_{RS}(\Gamma,\R^n\times\{\vec{0}\})$ is the Robbin--Salamon--Maslov index of the path $\Gamma$ with respect to $\R^n\times\{\vec{0}\}$ as defined in \cite[Section 2]{RS}.  Since $\mu_{RS}(\Gamma,\R^n\times\{\vec{0}\})$ receives an initial contribution of one-half of the signature of the crossing form of $\Gamma$ with $\R^n\times\{\vec{0}\}$ at $t=0$, and this signature is congruent to $n$ modulo $2$, $\frac{n}{2}-\mu_{RS}(\Gamma,\R^n\times\{\vec{0}\})$ is indeed an integer; the subsequent reduction modulo $N_{\frak{c}}$ removes the dependence of $\mu(\gamma)$ on the homotopy $v$ from $\gamma_{\frak{c}}$ to $\gamma$, so that $\mu(\gamma)\in\frac{\Z}{N_{\frak{c}}\Z}$ indeed depends only on $\gamma$ (given the choices of $\gamma_{\frak{c}}$ and $\tau_{\frak{c}}$ that were made at the outset).
Our normalization is designed so that if $L_0=L_1$ is the zero section of the cotangent bundle $T^*N$ and $H$ is the pullback of a $C^2$-small Morse function on $N$, so that elements of $Crit(\mathcal{A}_H)$ are constant paths $\gamma_p$ at critical points $p$ of $H$, then the Maslov index $\mu(\gamma_p)$ coincides with the Morse index of $p$.

Now for $k\in \Z/N_{\frak{c}}\Z$ let $CF_{\frak{c},k}(\hat{L}_0,\hat{L}_1;H)$ be the $\Z/2\Z$-vector space generated by those $\gamma\in Crit(\mathcal{A}_H)$ representing the homotopy class $\frak{c}$ with $\mu(\gamma)=k$.  For $\lambda\in\R$ let $CF^{\lambda}_{\frak{c},k}(\hat{L}_0,\hat{L}_1;H)\leq CF_{\frak{c},k}(\hat{L}_0,\hat{L}_1;H)$ be the subspace generated by those $\gamma$ which additionally have $\mathcal{A}_H(\gamma)\leq \lambda$.

Given a path of almost complex structures $\mathbb{J}=\{J_t\}_{t\in [0,1]}$ as in (IV) above we consider the associated negative gradient flow equation for $\mathcal{A}_H$, for a map $u\co \R\times [0,1]\to \hat{M}$:\begin{equation}\label{floereq} \frac{\partial u}{\partial s}+J_t(u(s,t))\left(\frac{\partial u}{\partial t}-X_{H}(t,u(s,t))\right)=0\quad u(s,0)\in \hat{L}_0,\,u(s,1)\in \hat{L}_1\mbox{ for all }s\in \R.\end{equation}  For any finite-energy solution to (\ref{floereq}) there are $\gamma_{\pm}\in Crit(\mathcal{A}_H)$ such that $u(s,\cdot)\to \gamma_{\pm}$ uniformly in $t$ as $s\to\pm\infty$.  Note that our assumptions on $H$ and on the $L_i$ ensure that $\gamma_{-}$ and $\gamma_+$ both have image contained in $\hat{M}\setminus ([R_H,\infty)\times \partial M)$, where $R_H\geq R_{\mathbb{J}}$ is chosen so large that $H$ vanishes identically on $[0,1]\times [R_H,\infty)\times \partial M$.  In fact, our assumptions on $\mathbb{J}$ together with a maximum principle ensure that any finite-energy solution $u$ to (\ref{floereq}) must have image contained entirely in $\hat{M}\setminus ((R_H,\infty)\times \partial M)$, as can be seen for instance from \cite[Theorem 2.1]{OhNC} or \cite[Sections 7c,7d]{AS}.

In view of this maximum principle and of the fact that bubbling is prevented by the exactness of the symplectic form on $\hat{M}$ and of the Lagrangian submanifolds $\hat{L}_0,\hat{L}_1$, the standard construction of the Floer boundary operator as in \cite{F}, \cite{OhI} gives for suitably generic $\mathbb{J}$ a map $\partial_{\mathbb{J}}\co CF_{\frak{c}}(\hat{L}_0,\hat{L}_1;H)\to CF_{\frak{c}}(\hat{L}_0,\hat{L}_1;H)$ which counts finite-energy index-one solutions to (\ref{floereq}), and satisfies  $\partial_{\mathbb{J}}\circ \partial_{\mathbb{J}}=0$.  Moreover for each $\lambda\in \R$ and $k\in \Z/N_{\frak{c}}\Z$  we have \[ \partial_{\mathbb{J}}(CF_{\frak{c},k}^{\lambda}(\hat{L}_0,\hat{L}_1;H))\subset CF_{\frak{c},k-1}^{\lambda}(\hat{L}_0,\hat{L}_1;H).\] Thus $(CF_{\frak{c}}(\hat{L}_0,\hat{L}_1;H),\partial_{\mathbb{J}})$ is  a $\Z/N_{\frak{c}}\Z$-graded, $\R$-filtered chain complex of $\mathbb{Z}/2\Z$-vector spaces.

For any element $c=\sum_i a_i\gamma_i\in CF_{\frak{c}}(\hat{L}_0,\hat{L}_1;H)$ (where $a_i\in\Z/2\Z$ and $\gamma_i\in Crit(\mathcal{A}_H)$), write \[ \ell(c)=\max\{\mathcal{A}_H(\gamma_i)|a_i\neq 0\},\] where the maximum of the empty set is defined to be $-\infty$.  Thus $\ell(\partial_{\mathbb{J}}c)\leq \ell(c)$ for all $c\in CF_{\frak{c}}(\hat{L}_0,\hat{L}_1;H)$.

Choose a monotone increasing smooth function $\beta\co \R\to [0,1]$ such that $\beta(s)=0$ for $s\leq -1$ and $\beta(s)=1$ for $s\geq 1$.
Given two pairs $(H_-,\mathbb{J}_-=\{J_{-,t}\})$, $(H_+,\mathbb{J}_+=\{J_{+,t}\})$ as in (II), (IV), define $H\co \R\times [0,1]\times M\to\R$ by $H(s,t,m)=\beta(s)H_-(t,m)+(1-\beta(s))H_+(m)$.  For a suitably generic family of almost complex structures $J_{s,t}$ as in (IV) which interpolate between $J_{-,t}$ and $J_{+,t}$, counting isolated solutions $u\co \R\times[0,1]\to\hat{M}$ to \[ \frac{\partial u}{\partial s}+J_{s,t}(u(s,t))\left(\frac{\partial u}{\partial t}-X_{H(s,\cdot)}(t,u(s,t))\right)=0,\quad u(\R\times \{0\})\subset \hat{L}_0,u(\R\times\{1\})\subset \hat{L}_1 \] gives rise to a chain map \[ \Phi_{H_-H_+}\co CF_{\frak{c}}(\hat{L}_0,\hat{L}_1;H_-)\to CF_{\frak{c}}(\hat{L}_0,\hat{L}_1;H_+).\]  A standard estimate (see, \emph{e.g.}, \cite[p. 564]{OhI}, noting the different sign conventions) gives \[   \ell(\Phi_{H_-H_+}c)\leq \ell(c)+\|H_+-H_-\| \]  for all $c\in CF_{\frak{c}}(\hat{L}_0,\hat{L}_1;H_-)$, where $\|\cdot\|$ denotes the Hofer norm \[ \|H\|=\int_{0}^{1}\left(\max_{\hat{M}}H(t,\cdot)-\min_{\hat{M}}H(t,\cdot)\right)dt.\]  Moreover by considering appropriate homotopies of homotopies (see \emph{e.g.}, \cite[Proposition 2.2]{U11a} and the text preceding it for details in the essentially identical context of Hamiltonian Floer theory), one obtains maps $\mathcal{K}_{\pm}\co CF_{\frak{c}}(\hat{L}_0,\hat{L}_1;H_{\pm})\to CF_{\frak{c}}(\hat{L}_0,\hat{L}_1;H_{\pm})$ such that \[ \Phi_{H_+H_-}\circ \Phi_{H_-H_+}-1=\partial_{\mathbb{J}_-}\circ\mathcal{K}_-+\mathcal{K}_-\circ \partial_{\mathbb{J}_-},\quad \Phi_{H_-H_+}\circ \Phi_{H_+H_-}-1=\partial_{\mathbb{J}_+}\circ\mathcal{K}_++\mathcal{K}_+\circ \partial_{\mathbb{J}_+} \] and, for all $c\in CF_{\frak{c}}(\hat{L}_0,\hat{L}_1;H_{\pm})$, \[ \ell(\mathcal{K}_{\pm}c)\leq \ell(c)+\|H_+-H_-\|.\]  In other words, in the language of \cite[Definition 3.7]{U11b}, the $\mathbb{Z}/N_{\frak{c}}\Z$-graded, $\R$-filtered complexes $(CF_{\frak{c}}(\hat{L}_0,\hat{L}_1;H_-),\partial_{\mathbb{J}_-})$ and $(CF_{\frak{c}}(\hat{L}_0,\hat{L}_1;H_+),\partial_{\mathbb{J}_+})$ are ``$\|H_+-H_-\|$-quasiequivalent.''

We define the boundary depth \[ \beta_{\frak{c},k}(\hat{L}_0,\hat{L}_1;H)=\inf\left\{\beta\geq 0\left|(\forall \lambda\in \R)\left((Im\partial_{\mathbb{J}})\cap CF_{\frak{c},k}^{\lambda}(\hat{L}_0,\hat{L}_1;H)\subset \partial_{\mathbb{J}}\left(CF_{\frak{c},k+1}^{\lambda+\beta}(\hat{L}_0,\hat{L}_1;H)\right)\right) \right.\right\}.\] \cite[Proposition 3.8]{U11b} shows that we have \begin{equation}\label{betacont} |\beta_{\frak{c},k}(\hat{L}_0,\hat{L}_1;H_+)-\beta_{\frak{c},k}(\hat{L}_0,\hat{L}_1;H_-)|\leq \|H_+-H_-\|.\end{equation}  (In particular $\beta_{\frak{c},k}(\hat{L}_0,\hat{L}_1;H)$ is independent of the generic family of almost complex structures as in (IV) used to define it.)  \emph{A priori}, $\beta_{\frak{c},k}(\hat{L}_0,\hat{L}_1;H)$ has only been defined when $H$ obeys (II), but (\ref{betacont}) allows us to extend this definition continuously to arbitrary smooth (or even just continuous) compactly supported functions $H\co [0,1]\times \hat{M}\to \R$.

Now generators for the Floer complex $CF_{\frak{c}}(\hat{L}_0,\hat{L}_1;H)$ correspond to certain intersection points of $\hat{L}_0$ with $(\phi_{H}^{1})^{-1}(\hat{L}_1)$, while the definition of the filtration on the complex and hence of the boundary depth $\beta_{\frak{c},k}(\hat{L}_0,\hat{L}_1;H)$ at least appear to depend on the particular Hamiltonian function $H$ generating the time-one map $\phi_{H}^{1}$.  In fact we will presently see that, much like the situation in \cite[Section 6]{U11b}, modulo shifts in the homotopy and grading data $\frak{c}$ and $k$ the boundary depth actually only depends on the Lagrangian submanifolds $\hat{L}_0$ and $(\phi_{H}^{1})^{-1}(\hat{L}_1)$.

To set this up, consider any Hamiltonian isotopy $\psi_t\co \hat{M}\to \hat{M}$ ($0\leq t\leq 1$) with $\psi_0=1_{\hat{M}}$ and such that $\psi_1(\hat{L}_1)=\hat{L}_1$. For a path $\gamma\in \mathcal{P}(\hat{L}_0,\hat{L}_1)$, define a new path $\Psi\gamma\co [0,1]\to \hat{M}$ by \[ (\Psi\gamma)(t)=\psi_t(\gamma(t)).\]  The fact that $\psi_1(\hat{L}_1)=\hat{L}_1$ implies that $\Psi\gamma\in \mathcal{P}(\hat{L}_0,\hat{L}_1)$.  This induces a map \[ \Psi_*\co \pi_0(\mathcal{P}(\hat{L}_0,\hat{L}_1))\to \pi_0(\mathcal{P}(\hat{L}_0,\hat{L}_1)) \] defined by $\Psi_*[\gamma]=[\Psi\gamma]$.
Where $Ham_{\hat{L}_1}$ is the subgroup of $Ham(\hat{M},d\hat{\theta})$ consisting of Hamiltonian diffeomorphisms which preserve $\hat{L}_1$, the map $\Psi_*$ evidently depends only on the relative homotopy class $[\Psi]\in \pi_1(Ham(\hat{M},d\hat{\theta}),Ham_{\hat{L}_1},1_M)$.  Thus we have an action of $\pi_1(Ham(\hat{M},d\hat{\theta}),Ham_{\hat{L}_1},1_M)$ on $\pi_0(\mathcal{P}(\hat{L}_0,\hat{L}_1))$.

\begin{remark} In our main application it will hold that, for any compact set $K\subset \hat{M}$, every class in $\pi_0(\mathcal{P}(\hat{L}_0,\hat{L}_1))$ is represented by a path which is disjoint from $K$.  Since our Hamiltonian isotopies are compactly supported it follows that in this case the action of $\pi_1(Ham(\hat{M},d\hat{\theta}),Ham_{L_1},1_M)$ on $\pi_0(\mathcal{P}(\hat{L}_0,\hat{L}_1))$ is trivial.
\end{remark}

If two Hamiltonians $G,H\co [0,1]\times \hat{M}\to \R$ have the property that $(\phi_{H}^{1})^{-1}(\hat{L}_1)=(\phi_{G}^{1})^{-1}(\hat{L}_1)$, define $\psi_t=\phi_{H}^{t}\circ (\phi_{G}^{t})^{-1}$, so that $\psi_1(\hat{L}_1)=\hat{L}_1$.  If $F\co [0,1]\times \hat{M}\to \R$ is the Hamiltonian generating the isotopy $\{\psi_t\}$, we will have \begin{equation}\label{ghf} G(t,m)=(H-F)(t,\psi_t(m))\end{equation} for $(t,m)\in [0,1]\times\hat{M}$.

\begin{prop}\label{shiftiso} If $G,H,F,\psi_t$ are as above and if $\phi_{H}^{1}(\hat{L}_0)$ is transverse to $\hat{L}_1$, then for each $\frak{c}\in \pi_0(\mathcal{P}(\hat{L}_0,\hat{L}_1))$ there are $\mu_{\frak{c}}\in \Z/N_{\frak{c}}\Z$, $\lambda_{\frak{c}}\in \R$, and paths of almost complex strucures $\mathbb{J}$, $\mathbb{J}'$ as in (IV) such that there is an isomorphism of chain complexes \[ \Phi\co (CF_{\frak{c}}(\hat{L}_0,\hat{L}_1;G),\partial_{\mathbb{J}})\to (CF_{\Psi_*\frak{c}}(\hat{L}_0,\hat{L}_1;H),\partial_{\mathbb{J}'}) \] with the property that, for each $k\in \Z/N_{\frak{c}}\Z$ and $\lambda\in \R$, \[ \Phi\left(CF_{\frak{c},k}^{\lambda}(\hat{L}_0,\hat{L}_1;G)\right)=CF_{\Psi_*\frak{c},k+\mu_{\frak{c}}}^{\lambda+\lambda_{\frak{c}}}(\hat{L}_0,\hat{L}_1;H).\]
\end{prop}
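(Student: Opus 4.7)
The plan is to construct $\Phi$ from the bijection on generators induced by the path-level map $\gamma\mapsto \Psi\gamma$, and then transport the auxiliary Floer data (almost complex structures, bounding half-disks, trivializations) by the family of symplectomorphisms $\psi_t$. First, if $\gamma$ is a $G$-chord then $\gamma(t)=\phi_G^t(\gamma(0))$, so from $\phi_H^t=\psi_t\circ\phi_G^t$ we obtain $(\Psi\gamma)(t)=\psi_t(\phi_G^t(\gamma(0)))=\phi_H^t(\gamma(0))$, an $H$-chord. The boundary conditions are preserved because $\psi_0=1_{\hat M}$ and $\psi_1(\hat L_1)=\hat L_1$, and by definition $\gamma\in\frak{c}$ implies $\Psi\gamma\in \Psi_*\frak{c}$. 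This gives a bijection on generators, which I extend $\Z/2\Z$-linearly to $\Phi$.

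Second, I compute the filtration shift. For $\gamma$ with bounding half-disk $u\co [0,1]^2\to\hat M$ from $\gamma_{\frak{c}}$ (subject to the usual Lagrangian boundary conditions), set $\tilde u(s,t)=\psi_t(u(s,t))$, which is a half-disk from $\Psi\gamma_{\frak{c}}$ to $\Psi\gamma$. Using $\psi_t^*d\hat\theta=d\hat\theta$ together with the identity $d\hat\theta(\cdot,X_F(t,\cdot))=dF(t,\cdot)$, a direct calculation of $d\hat\theta(\partial_s\tilde u,\partial_t\tilde u)$ yields
\[ \int \tilde u^*d\hat\theta=\int u^*d\hat\theta +\int_0^1 \bigl(F(t,\Psi\gamma(t))-F(t,\Psi\gamma_{\frak{c}}(t))\bigr)dt. \]
Combining this with the consequence $H(t,\Psi\gamma(t))-G(t,\gamma(t))=F(t,\Psi\gamma(t))$ of (\ref{ghf}), and then correcting from $\Psi\gamma_{\frak{c}}$ to the chosen reference path $\gamma_{\Psi_*\frak{c}}$ by a once-and-for-all fixed homotopy $w$, I obtain
\[ \mathcal{A}_H(\Psi\gamma)-\mathcal{A}_G(\gamma)=\int_0^1 F(t,\Psi\gamma_{\frak{c}}(t))dt-\int w^*d\hat\theta=:\lambda_{\frak{c}}, \]
independent of $\gamma$. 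So $\Phi$ shifts filtration by the constant $\lambda_{\frak{c}}$.

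Third, I show $\Phi$ intertwines boundary operators. Given a generic $\mathbb{J}=\{J_t\}$ regular for $G$, define $\mathbb{J}'=\{(\psi_t)_*J_t(\psi_t^{-1})_*\}$; since $F$, $H$, and $G$ are compactly supported, $\psi_t$ is the identity outside a compact set, so $\mathbb{J}'$ still satisfies (IV) with the same cylindrical-end data. Using the formula $(\psi_t)_*X_G(t,\cdot)=X_{G\circ\psi_t^{-1}}(t,\cdot)=X_{H-F}(t,\cdot)$ for the pushforward of a Hamiltonian vector field by a symplectomorphism, one verifies by direct substitution that the assignment $u(s,t)\mapsto\psi_t(u(s,t))$ sends solutions of Floer's equation (\ref{floereq}) for $(G,\mathbb{J})$ bijectively onto solutions for $(H,\mathbb{J}')$, preserving Lagrangian boundary conditions and finite energy. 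Regularity transfers, and the resulting bijection between index-one trajectories shows that $\Phi$ is a chain isomorphism.

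Finally, I compute the grading shift. The path $\Gamma_H(t)=(\phi_H^t)_*T_{\Psi\gamma(0)}\hat L_0=(\psi_t)_*(\phi_G^t)_*T_{\gamma(0)}\hat L_0$ differs from $\Gamma_G(t)=(\phi_G^t)_*T_{\gamma(0)}\hat L_0$ by conjugation by $\psi_t$; once one trivializes $\tilde u^*T\hat M$ by pushing forward the trivialization of $u^*T\hat M$ via $(\psi_t)_*$, the Robbin--Salamon index computed along $\Gamma_H$ relative to this pushed-forward trivialization equals the one computed along $\Gamma_G$. The discrepancy between this pushed-forward trivialization and the reference trivialization $\tau_{\Psi_*\frak{c}}$ depends only on the fixed homotopy $w$ and not on $\gamma$, and so contributes a constant $\mu_{\frak{c}}\in\Z$, well-defined modulo $N_{\frak{c}}=N_{\Psi_*\frak{c}}$ (the latter equality holding because $\Psi_*$ acts on $\pi_1(\frak{c},\gamma_{\frak{c}})$ by conjugation with a symplectic path, which preserves the relevant Maslov loop-indices). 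The hardest part of the argument is this final bookkeeping of trivializations, since the bounding half-disks on the two sides are not canonically related; everything else is largely formal once the transport maps on generators, trajectories, and complex structures are set up.
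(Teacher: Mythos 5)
Your proof is correct and follows essentially the same route as the paper's: defining $\Phi$ via $\gamma\mapsto\Psi\gamma$, conjugating the almost complex structures to $\mathbb{J}'=\{(\psi_t)_*J_t(\psi_t^{-1})_*\}$ so the Floer trajectories transport, computing the action shift by the chain rule applied to $F$ along $\tilde u=\Psi u$ and absorbing the remainder into a $\gamma$-independent constant $\lambda_{\frak{c}}=\int_0^1F(t,\Psi\gamma_{\frak{c}}(t))dt-\int w^*d\hat\theta$ (the paper's $\mathcal{A}_F(\gamma_{\Psi_*\frak{c}})$), and identifying the grading shift as a Maslov-loop index depending only on the fixed homotopy $w$. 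The only point you elide is the appearance of the term $\dot\psi_t=X_F$ in $\partial_t\tilde u$ when verifying the intertwining of Floer's equation, but this is exactly what makes the pushforward identity $(\psi_t)_*X_G=X_{H-F}$ close the computation, so no gap.
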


\begin{proof} The proof is essentially the same as that of \cite[Proposition 6.2]{U11b}.  First note that a path $\gamma\co[0,1]\to \hat{M}$ represents the class $\frak{c}\in \pi_0(\mathcal{P}(\hat{L}_0,\hat{L}_1))$ and belongs to $Crit(\mathcal{A}_G)$ if and only if the curve $\Psi\gamma$ represents the class $\Psi_{*}\frak{c}$ and belongs to $Crit(\mathcal{A}_H)$.  So we can define the map $\Phi$ on generators by setting $\Phi(\gamma)=\Psi\gamma$; obviously $\Phi$ is an isomorphism of vector spaces.

For general maps $u\co \R\times [0,1]\to\hat{M}$ and $K\co [0,1]\times \hat{M}\to \R$ and families of almost complex structures $\mathbb{J}=\{J_t\}_{0\leq t\leq 1}$ we denote \[ \delbar_{\mathbb{J},K}u=\frac{\partial u}{\partial s}+J_t\left(\frac{\partial u}{\partial t}-X_K\right);\] this is a section of $u^*T\hat{M}$.  If, given $\mathbb{J}$ as in (IV), we define $\mathbb{J}'=\{J'_t\}_{0\leq t\leq 1}$ by $J'_t=(\psi_{t})_* J_t(\psi_{t}^{-1})_*$, a routine computation shows that, for all $(s,t)\in \R\times S^1$,\[  (\psi_{t})_*\left((\delbar_{\mathbb{J},G}u)_{(s,t)}\right)=\left(\delbar_{\mathbb{J}',H}(\Psi u)\right)_{(s,t)}\] where $(\Psi u)(s,t)=\psi_t(u(s,t))$.  Of course we have $u(\R\times\{i\})\subset \hat{L}_i$ if and only if $(\Psi u)(\R\times \{i\})\subset \hat{L}_i$, since $\psi_0$ is the identity and $\psi_1(\hat{L}_1)=\hat{L}_1$.  So the map $u\mapsto \Psi u$ sends the moduli spaces of flowlines used to define the differential $\partial_{\mathbb{J}}$ on $CF_{\frak{c}}(\hat{L}_0,\hat{L}_1;G)$  bijectively to the corresponding (via $\Phi$) moduli spaces used to define $\partial_{\mathbb{J}'}$ on $CF_{\frak{c}}(\hat{L}_0,\hat{L}_1;H)$ (and moreover preserves Fredholm regularity of these solutions).  This proves that $\Phi$ is an isomorphism of chain complexes, provided of course that $\mathbb{J}$ has been chosen generically so as to ensure that $\partial_{\mathbb{J}}$ is well-defined and satisfies the usual properties.

It remains to prove the statement about the effect of $\Phi$ on gradings and filtrations.  Recall that, to define the action functionals $\mathcal{A}_G,\mathcal{A}_H$, we have chosen representatives $\gamma_{\frak{c}}$ for each class $\frak{c}\in \pi_0(\mathcal{P}(\hat{L}_0,\hat{L}_1))$. For each $\frak{c}\in \pi_0(\mathcal{P}(\hat{L}_0,\hat{L}_1)$, choose once and for all a map $w\co [0,1]\times[0,1]\to \hat{M}$ such that $w([0,1]\times\{i\})\subset \hat{L}_i$ for $i=0,1$, while $w(0,\cdot)=\gamma_{\Psi_*\frak{c}}$ and $w(1,\cdot)=\Psi\gamma_{\frak{c}}$.  

If $\gamma$ is a representative of $\frak{c}$, let $v\co[0,1]\times[0,1] \to \hat{M}$
be such that $v([0,1]\times\{i\})\subset \hat{L}_i$ for $i=0,1$, $v(0,\cdot)=\gamma_{\frak{c}}$, and $v(1,\cdot)=\gamma$.  Also define $(\Psi v)(s,t)=\psi_t(v(s,t))$.  Then concatenating $w$ and $\Psi v$ gives a homotopy from $\gamma_{\Psi_*\frak{c}}$ to $\Psi\gamma$.  We therefore have, using (\ref{ghf}) and the fact that the $\psi_t$ are symplectomorphisms \begin{align*}
\mathcal{A}_H(\Psi\gamma)&=-\int_{[0,1]^2}w^{*}d\hat{\theta}-\int_{[0,1]^2}(\Psi v)^{*}d\hat{\theta}+\int_{0}^{1}H(t,\psi_t(\gamma(t)))dt \\&= -\int_{[0,1]^2}w^{*}d\hat{\theta}-\int_{[0,1]^2}v^*d\hat{\theta}-\int_{0}^{1}\int_{0}^{1}(dF)_{\psi_t(v(s,t))}\left(\frac{\partial (\Psi v)}{\partial s}\right)dsdt +\int_{0}^{1}H(t,\psi_t(\gamma(t)))dt 
\\&=-\int_{[0,1]^2}w^{*}d\hat{\theta}-\int_{[0,1]^2}v^*\hat{\theta}-\int_{0}^{1}(F(t,\psi_t(\gamma(t)))-F(t,\Psi\gamma_{\frak{c}}))dt+\int_{0}^{1}H(t,\psi_t(\gamma(t)))dt
\\&=\mathcal{A}_F(\gamma_{\Psi_*\frak{c}})-\int_{[0,1]^2}v^*d\hat{\theta}+\int_{0}^{1}(H-F)(t,\psi_t(\gamma(t)))dt \\&=
\mathcal{A}_F(\gamma_{\Psi_*\frak{c}})+\mathcal{A}_G(\gamma).\end{align*}
This proves that our chain isomorphism $\Phi$ obeys $\ell(\Phi c)=\ell(c)+\mathcal{A}_F(\gamma_{\Psi_*\frak{c}})$ for all $c\in CF_{\frak{c}}(\hat{L}_0,\hat{L}_1;H)$, so we may set $\lambda_{\frak{c}}=\mathcal{A}_F(\gamma_{\Psi_*\frak{c}})$ in the statement of the proposition.

As for the gradings, by using a trivialization over the concatenation of the above maps $w$ and $\Psi v$ to compute $\mu(\Psi\gamma)$, one may verify that $\mu(\Psi\gamma)-\mu(\gamma)$ is given as follows.  Let $\tau^{w}\co w^{*}T\hat{M}\to [0,1]^2\times \R^{2n}$ be a symplectic trivialization which restricts to our fixed trivialization $\tau_{\Psi_*\frak{c}}$ over $\gamma_{\Psi_*{\frak{c}}}$ and which sends $(w^{*}T\hat{L}_i)_{(s,i)}$ to $\R^{n}\times \{\vec{0}\}$ for all $s\in [0,1]$ and $i=0,1$.  We then have a Lagrangian subbundle $\mathcal{L}_w=(\tau^{w})^{-1}(\{1\}\times [0,1]\times\R^n\times\{\vec{0}\})$ of $(\Psi\gamma_{\frak{c}})^{*}T\hat{M}$, which coincides over $i=0,1$ with $T_{(\Psi\gamma)(i)}\hat{L}_i$.  Meanwhile, we have a trivialization $\Psi_*\tau_{\frak{c}}\co (\Psi\gamma_{\frak{c}})^{*}T\hat{M}\to [0,1]\times \R^{2n}$ obtained by pushing forward the fixed trivialization $\tau_{\frak{c}}$ of $\gamma_{\frak{c}}^{*}T\hat{M}$ in the obvious way, and  $\Psi_*\tau_{\frak{c}}(T_{(\Psi\gamma)(i)}\hat{L}_i)=\R^{n}\times\{\vec{0}\}$.  $\Psi_*\tau_{\frak{c}}(\mathcal{L}_w)$ thus defines a loop of Lagrangian subspaces of $\R^{2n}$, and using the catenation property of the Robbin--Salamon--Maslov index $\mu(\Psi\gamma)-\mu(\gamma)$ can be seen to agree with the Maslov index of this loop.  So we may define $\mu_{\frak{c}}$ as the mod $N_{\frak{c}}$ reduction of the Maslov index of $\Psi_*\tau_{\frak{c}}(\mathcal{L}_w)$, completing the proof.
\end{proof}

Recall from the introduction that 
for any closed subset $S\subset \hat{M}$ we define \[ \mathcal{L}(S)=\{\phi(S)|\phi\in Ham(\hat{M},d\hat{\theta})\},\] and for $S_1,S_2\in \mathcal{L}(S)$ \[ \delta(S_1,S_2)=\inf\left\{\left.\int_{0}^{1}\left(\max_{\hat{M}} H(t,\cdot)-\min_{\hat{M}} H(t,\cdot)\right)dt\right|\phi_{H}^{1}(S_1)=S_2,\,H\in C^{\infty}_{c}([0,1]\times\hat{M})  \right\}.\] where $C^{\infty}_{c}([0,1]\times\hat{M})$ denotes the space of compactly-supported smooth real-valued functions on $[0,1]\times \hat{M}$.

If $L_0,L_1$ are two filling Lagrangians in $M$, and if $\Lambda\in\mathcal{L}(\hat{L}_1)$, define \begin{equation}\label{Bdef} B(\hat{L}_0,\Lambda)=\sup_{\frak{c},k}\beta_{\frak{c},k}(\hat{L}_0,\hat{L}_1;H)\mbox{ for any $H\in C^{\infty}_{c}([0,1]\times\hat{M})$ with }(\phi_{H}^{1})^{-1}(\hat{L}_1)=\Lambda.\end{equation}

\begin{cor}\label{lip} $B(\hat{L}_0,\Lambda)$ is independent of the choice of $H$ used to define it. Moreover, for any $\Lambda,\Lambda'\in \mathcal{L}(\hat{L}_1)$, \[ \delta(\Lambda,\Lambda')\geq |B(\hat{L}_0,\Lambda)-B(\hat{L}_0,\Lambda')|.\]
\end{cor}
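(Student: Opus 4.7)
The plan is to handle the two claims separately, drawing on Proposition \ref{shiftiso} for independence of the Hamiltonian and on the continuity estimate (\ref{betacont}) for the Lipschitz bound.

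For independence, suppose $G$ and $H$ both generate time-one maps with $(\phi_{G}^{1})^{-1}(\hat L_1) = (\phi_{H}^{1})^{-1}(\hat L_1) = \Lambda$. The filtered chain isomorphism $\Phi$ produced by Proposition \ref{shiftiso} sends $CF_{\frak{c}, k}^{\lambda}(\hat L_0, \hat L_1; G)$ bijectively onto $CF_{\Psi_* \frak{c}, k+\mu_{\frak{c}}}^{\lambda+\lambda_{\frak{c}}}(\hat L_0, \hat L_1; H)$ while intertwining differentials. Tracing the definition of the boundary depth through this identification shows that shifting filtration and grading by constants leaves $\beta_{\frak{c}, k}$ unchanged, so $\beta_{\frak{c}, k}(\hat L_0, \hat L_1; G) = \beta_{\Psi_*\frak{c}, k+\mu_{\frak{c}}}(\hat L_0, \hat L_1; H)$. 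Since $(\frak{c}, k) \mapsto (\Psi_* \frak{c}, k+\mu_{\frak{c}})$ is a bijection on the indexing set (using $N_{\Psi_* \frak{c}} = N_{\frak{c}}$, which holds because $\Psi$ is built from symplectomorphisms and hence preserves Maslov indices), the supremum in (\ref{Bdef}) is independent of the choice of $H$.

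For the Lipschitz estimate, given $\epsilon > 0$ I would choose $K \in C_c^{\infty}([0,1] \times \hat M)$ with $\phi_{K}^{1}(\Lambda) = \Lambda'$ and $\|K\| \leq \delta(\Lambda, \Lambda') + \epsilon$, fix any $H$ realizing $\Lambda$, and construct a second Hamiltonian realizing $\Lambda'$ by setting $K'(t, x) = -K(1-t, x)$ (so that $\phi_{K'}^{1} = (\phi_{K}^{1})^{-1}$) and
\[ H'(t, x) = H(t, x) + K'(t, (\phi_{H}^{t})^{-1}(x)). \]
The standard composition identity yields $\phi_{H'}^{t} = \phi_{H}^{t} \circ \phi_{K'}^{t}$, so $(\phi_{H'}^{1})^{-1}(\hat L_1) = \phi_{K}^{1}(\Lambda) = \Lambda'$; moreover $\|H' - H\| = \|K'\| = \|K\|$, because the pointwise max and min over $\hat M$ of $K'(t, (\phi_{H}^{t})^{-1}(\cdot))$ coincide with those of $K'(t, \cdot)$. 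Applying (\ref{betacont}) to each $\beta_{\frak{c}, k}$ and then taking the supremum, while using the first part of the corollary to equate the suprema with $B(\hat L_0, \Lambda)$ and $B(\hat L_0, \Lambda')$, gives
\[ B(\hat L_0, \Lambda') \leq B(\hat L_0, \Lambda) + \|K\| \leq B(\hat L_0, \Lambda) + \delta(\Lambda, \Lambda') + \epsilon. \]
Swapping the roles of $\Lambda$ and $\Lambda'$ (noting that replacing $K$ by $K'$ shows $\delta(\Lambda', \Lambda) \leq \|K\|$ as well) and letting $\epsilon \to 0$ then yields the desired inequality.

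The remaining items to verify are routine: $H'$ has compact support because $\mathrm{supp}(K)$ is compact and $\phi_{H}^{t}$ equals the identity outside a compact set, and (\ref{betacont}) applies to the pair $(H, H')$ via the continuous extension of $\beta_{\frak{c}, k}$ to arbitrary compactly supported Hamiltonians noted just after that estimate, removing any need to arrange transversality at intermediate stages. The only substantive input beyond bookkeeping is the computation $\|H' - H\| = \|K\|$, which is the key reason the composition-style Hamiltonian $H'$ is preferable to a concatenation.
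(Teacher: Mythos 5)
Your proposal is correct and follows essentially the same strategy as the paper: Proposition~\ref{shiftiso} gives independence of $H$ because the filtered chain isomorphism shifts filtrations and gradings by constants, leaving the boundary depths unchanged, and then (\ref{betacont}) together with a composition formula for Hamiltonians gives the $1$-Lipschitz estimate. The only cosmetic difference is in the bookkeeping: the paper writes the second Hamiltonian as $G(t,m)=H(t,m)-F(t,\phi_F^t((\phi_H^t)^{-1}(m)))$ using the naive Hamiltonian for the reversed flow, while you use the time-reversed form $K'(t,x)=-K(1-t,x)$; both give $\|H-G\|=\|F\|$ and $\|H'-H\|=\|K\|$ respectively, so this is an equivalent choice.
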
 

\begin{proof}
Suppose for the moment that $\hat{L}_0$ is transverse to $\Lambda$, so that if we choose any $H\in C^{\infty}_{c}([0,1]\times \hat{M})$ such that $(\phi_{H}^{1})^{-1}(\hat{L}_1)=\Lambda$ then we have a well-defined Floer complex $(CF(\hat{L}_0,\hat{L}_1;H),\partial_{\mathbb{J}})$, with the boundary depth $\beta_{\frak{c},k}(\hat{L}_0,\hat{L}_1;H)$ independent of the choice of $\mathbb{J}$.  If $G\in C^{\infty}_{c}([0,1]\times\hat{M})$ also has $(\phi_{G}^{1})^{-1}(\hat{L}_1)=\Lambda$, it follows directly from Proposition \ref{shiftiso} that for each $\frak{c}\in \pi_0(\mathcal{P}(\hat{L}_0,\hat{L}_1))$ there is $\mu_{\frak{c}}\in \Z/N_{\frak{c}}\Z$ such that, for all $k\in\Z/N_{\frak{c}}\Z$, $\beta_{\frak{c},k}(\hat{L}_0,\hat{L}_1;G)=\beta_{\Psi_*\frak{c},k+\mu_{\frak{c}}}(\hat{L}_0,\hat{L}_1;H)$ (where $\Psi_*\co \pi_0(\mathcal{P}(\hat{L}_0,\hat{L}_1))\to\pi_0(\mathcal{P}(\hat{L}_0,\hat{L}_1))$ is a bijection).  In particular the suprema of $\beta$ over all  $(\frak{c},k)$ are identical for $G$ and $H$, confirming that $B(\hat{L}_0,\Lambda)$ is independent of the choice of $H$ used to define it, at least if $\hat{L}_0$ and $\Lambda$ are transverse.

Now suppose that $\Lambda,\Lambda'\in\mathcal{L}(\hat{L}_1)$ are both transverse to $\hat{L}_0$, with $\Lambda=(\phi_{H}^{1})^{-1}(\hat{L}_1)$ and $\Lambda'=\phi_{F}^{1}(\Lambda)$.  
Then where $G(t,m)=H(t,m)-F(t,\phi_{F}^{t}((\phi_{H}^{t})^{-1}(m)))$ we have $\phi_{G}^{t}=\phi_{H}^{t}\circ (\phi_{F}^{t})^{-1}$ and so $(\phi_{G}^{1})^{-1}(\hat{L}_1)=\Lambda'$.  Note that $F(t,m)=(H-G)(t,\phi_{G}^{t}(m))$.   By (\ref{betacont}) we have, for all $\frak{c},k$, \begin{align*} |\beta_{\frak{c},k}(\hat{L}_0,\hat{L}_1;H)-\beta_{\frak{c},k}(\hat{L}_0,\hat{L}_1;G)|&\leq \int_{0}^{1}\left(\max_{\hat{M}}(H-G)(t,\cdot)-\min_{\hat{M}}(H-G)(t,\cdot)\right)dt\\&=\int_{0}^{1}\left(\max_{\hat{M}} F(t,\cdot)-\min_{\hat{M}} F(t,\cdot)  \right)dt
\end{align*}
Since this holds for all $\frak{c},k$ it follows that $|B(\hat{L}_0,\Lambda)-B(\hat{L}_0,\Lambda')|\leq \int_{0}^{1}\left(\max_{\hat{M}} F(t,\cdot)-\min_{\hat{M}} F(t,\cdot)\right)dt$  But $F\in C^{\infty}_{c}([0,1]\times\hat{M})$ was arbitrary subject to the requirement that $\Lambda'=\phi_{F}^{1}(\Lambda)$, so this proves that $|B(\hat{L}_0,\Lambda)-B(\hat{L}_0,\Lambda')|\leq \delta(\Lambda,\Lambda')$.
 
Finally, the case in which $\Lambda$ and/or $\Lambda'$ is not transverse to $\hat{L}_0$ follows straightforwardly by a limiting argument from the transverse case, by using  $C^0$-small Hamiltonians $F,F'$ such that $\phi_{F}^{1}(\Lambda),\phi_{F'}^{1}(\Lambda')$ are transverse to $\hat{L}_0$ (and bearing in mind that, in the nontransverse case, $\beta_{\frak{c},k}$ was defined as a limit using (\ref{betacont})).
\end{proof}

\section{Reparametrized geodesic flows} \label{geodesic}

To begin the analysis leading to our main results,
we now specialize the discussion of Section \ref{gens} to the following situation.  Let $(N,g)$ be a compact Riemannian manifold, let $x_1\in N$, and let $Q\subset N$ be a compact submanifold not containing $x_1$.  The conormal bundle of $Q$ is \[ \nu^*Q=\left\{(x,p)\in T^*N|_{Q}\left| p|_{T_{x}Q}=0\right.\right\} \]  Under the identification of $T^*N$ with $TN$ given by $g$, we have an exponential map $\exp\co \nu^*Q\to N$; recall that $x_1$ is said to be a \emph{focal point} of $Q$ if $x_1$ is a critical value of this map.  In particular, by Sard's theorem, almost every $x_1\in N$ is not a focal point of $Q$.  In the case that $Q$ is a singleton $\{x_0\}$ (so $\nu^*Q=T_{x_0}^{*}N$), a focal point of $\{x_0\}$ is also known as a conjugate point of $x_0$.

The $g$-identification of $T^*N$ and $TN$ gives an inner product $\langle\cdot,\cdot\rangle$ and norm $|\cdot|$ on the fibers of $T^*N$.  Our Liouville domain will be the disk bundle $M=D^*N=\{(x,p)\in T^*N||p|\leq 1\}$, endowed with the canonical one-form $\theta_{(x,p)}(v)=p(\pi_* v)$, with boundary $S^*N=\{(x,p)||p|=1\}$.    The completion $\hat{M}$ is then just the cotangent bundle $T^*N$ with its canonical one-form $\hat{\theta}$. More specifically, where $0_N$ is the zero section of $T^*N$, the Liouville completion process identifies $T^*N\setminus 0_N$ with $(0,\infty)_r\times S^*N$, with the canonical one-form $\hat{\theta}$ identified with $r\alpha$ where \[ r(x,p)=|p|\qquad \alpha_{(x,p)}(v)=\frac{p}{|p|}(\pi_*v) \] (\emph{i.e.} $\alpha$ is the pullback of the contact form $\theta|_{S^*N}$ by the standard projection $T^*N\setminus 0_N\to S^*N$).

Our filling Lagrangians $L_0,L_1\subset M=D^*N$ will be given by the disk conormal bundle $L_0=\nu^*Q\cap D^*N$ and the disk cotangent fiber $L_1=T^{*}_{x_1}N\cap D^*N$, where $x_1\in N$ and $Q\subset N\setminus\{x_1\}$ is a compact submanifold such that $x_1$ is not a focal point of $Q$.  So in particular $\theta|_{L_0}=\theta|_{L_1}=0$ and, in the notation of Section \ref{gens}, $\hat{L}_0=\nu^*Q$ and $\hat{L}_1=T_{x_1}^{*}N$.  We will consider Hamiltonians $H\co [0,1]\times T^*N\to \R$ of the following special form: \[ H_f(t,(x,p))=f(|p|)\quad\mbox{where }f\in C^{\infty}([0,\infty)),\,supp(f)\subset [0,R),\,supp(f')\subset(0,R) \] for some  $R>0$.  

So where $\psi_t\co S^*N\to S^*N$ is the Reeb flow of $\alpha=\theta|_{S^*N}$ on $S^*N$, and where $\phi_{H_f}^{t}\co T^*N\to T^*N$ is the Hamiltonian flow of $H_f$, we see that $\phi_{H_f}^{t}$ restricts to $0_N$ as the identity (since $f'$ is supported away from $0$), while on $T^*N\setminus 0_N\cong(0,\infty)\times S^*N$, we have \begin{equation}\label{hflow} \phi_{H_f}^{t}(r,z)=(r,\psi_{f'(r)t}(z)) \end{equation}

Now the Reeb flow $\psi_t\co S^*N\to S^*N$ is well-known and easily-verified to be given by the geodesic flow: using the identification of $S^*N$ with the sphere tangent bundle given by the metric, for $z=(x,p)\in S^*N$ (so $|p|=1$) $\psi_t((x,p))$ is given by the position and 
velocity at time $t$ of the geodesic whose initial position and velocity are $x$ and $p$.  

In particular, the restriction of $\phi_{H_f}^{1}$ to $\hat{L}_0=\nu^*Q$ sends $(x,p)\in T^*N$ where $p|_{T_xQ}=0$ to the pair $(\gamma(f'(|p|)),|p|\gamma'(f'(|p|)))$,  where $\gamma$ is the geodesic with initial position and velocity given by $x_0$ and $\frac{p}{|p|}$.  In order to set up the Floer complex $CF(\nu^*Q,T_{x_1}^{*}N;H)$, we require that $\phi_{H_f}^{1}(\nu^*Q)$ be transverse to $T_{x_1}^{*}N$.  This holds if and only if, where $\pi\co T^*N\to N$ is the bundle projection, $\pi\circ\phi_{H_f}^{1}\co \nu^*Q\to N$ has $x_1$ as a regular value.  Now we have 
\[ \pi\circ\phi_{H_f}^{1}(p)= \exp\left(f'(|p|)\frac{p}{|p|}\right) \]  Since $x_1$ is assumed to not be a focal point of $Q$, $x_1$ is a regular value of $\exp\co \nu^*Q\to N$; consequently $x_1$ will be a regular value of $\pi\circ\phi_{H_f}^{1}\co \nu^*Q\to N$ provided that \begin{equation}\label{transcond} \begin{array}{cc} \mbox{If $r\in \R$ and if there is a geodesic $\gamma$ such that $\gamma'(0)\in \nu^*Q$,  $\gamma(1)=x_1$,} \\ \mbox{and $\gamma$ has length $\ell=|f'(r)|$, then $f''(r)\neq 0$}\end{array}\end{equation} (here we use the $g$-identification of $T^*N$ with $TN$ to view $\nu^*Q$ as a subset of $TN$).  

Where we denote the sphere conormal bundle by $S\nu^*Q=S^*N\cap \nu^*Q$ and the sphere cotangent fiber by $S_{x_1}^{*}N=S^*N\cap T_{x_1}^{*}N$, 
let
 \[ \mathcal{B}(Q,x_1)=\{(\tau,y)\in \R\times S\nu^*Q|\psi_{\tau}(y)\in S^{*}_{x_1}(N) \}. \]  Thus $(\tau,(x,p))\in \mathcal{B}(Q,x_1)$ if and only if the geodesic $\gamma\co \R\to N$ with $\gamma(0)=x$ and $\gamma'(0)=p$ has $\gamma(\tau)=x_1$; note that negative values of $\tau$ are permitted. So elements $(\tau,y)$ of $\mathcal{B}(Q,x_1)$ are in one-to-one correspondence with Reeb flowlines (for $\tau>0$) or negative Reeb flowlines (for $\tau<0$) of duration $|\tau|$ which begin on $S\nu^*Q$ and end on $S_{x_1}^{*}N$. The \emph{action} of the positive or negative Reeb flowline $\eta_{(\tau,y)}$ corresponding to $(\tau,y)$, defined as $\int_{\eta_{(\tau,y)}}\alpha$, is in either case just given by $\tau\in \R$. 
 
 Now $\pi_0(\mathcal{P}(\nu^*Q,T_{x_1}^{*}N))$ is in obvious bijection (via the bundle projection) with the set $\mathcal{P}_N(Q,x_1)$ of homotopy classes of paths in $N$ from $Q$ to $x_1$; this identification will be implicit in what follows.  For $\frak{c}\in \pi_0(\mathcal{P}(\nu^*Q,T_{x_1}^{*}N))$ let \[ \mathcal{B}_{\frak{c}}(Q,x_1)=\{(\tau,y)\in\mathcal{B}(Q,x_1)| [\pi\circ\eta_{(\tau,y)}]=\frak{c} \} \] where $\pi\co S^*N\to N$ is the bundle projection and as in the last paragraph $\eta_{(\tau,y)}$ is the positive or negative Reeb flowline from $S\nu^*Q$ to $S_{x_1}^{*}N$ that corresponds to the pair $(\tau,y)$.  Also
 let \[ \mathcal{B}_{\frak{c}}^{+}(Q,x_1)=\{(\tau,y)\in \mathcal{B}_{\frak{c}}(x_0,x_1)|\tau>0\} \]

The cotangent bundle $T^*M$ has an  involution $h\co T^*M\to T^*M$ defined by $h(x,p)=(x,-p)$; this obeys $h^*{\hat{\theta}}=-\hat{\theta}$ and restricts to an involution $h\co S^*M\to S^*M$ obeying $h^{*}\alpha=-\alpha$. Thus the Reeb flow $\{\psi_t\}_{t\in\R}$ obeys $h\circ\psi_t=\psi_{-t}\circ h$.   Moreover obviously $h(\hat{L}_i)=\hat{L}_i$ and $\pi\circ h=\pi$, in view of which $(\tau,y)\in \mathcal{B}_{\frak{c}}(Q,x_1)$ if and only if $(-\tau,h(y))\in \mathcal{B}_{\frak{c}}(Q,x_1)$.  Consequently, defining $h_*\co\mathcal{B}_{\frak{c}}(Q,x_1)\to \mathcal{B}_{\frak{c}}(Q,x_1)$ by $h_*(\tau,y)=(-\tau,h(y))$, we have \[ \mathcal{B}_{\frak{c}}(Q,x_1)=\mathcal{B}_{\frak{c}}^{+}(Q,x_1)\coprod h_*\left( \mathcal{B}_{\frak{c}}^{+}(Q,x_1) \right) \]

We now set up the filtered Floer complex $CF_{\frak{c}}(\hat{L}_0,\hat{L}_1;H)$.  We must first choose a basepoint $\gamma_{\frak{c}}\co [0,1]\to T^*N$ for our homotopy class $\frak{c}\in \pi_0(\mathcal{P}(\hat{L}_0,\hat{L}_1))$; we take $\gamma_{\frak{c}}$ to be an arbitrary path in the class $\frak{c}$ that is contained in the zero section $0_N$ (and so necessarily has $\gamma_{\frak{c}}(0)=(x_0,0),\gamma_{\frak{c}}(1)=(x_1,0)$ for some $x_0\in Q$).  For later use it will be convenient to assume also that $\gamma_{\frak{c}}|_{[0,1/2]}$ is constant. 

With this choice of $\gamma_{\frak{c}}$, since the $1$-form $\hat{\theta}$ vanishes on each of $0_N$, $\hat{L}_0$, $\hat{L}_1$, we see from Stokes' theorem that, for any Hamiltonian $H\co [0,1]\times T^*N\to \R$, the action functional $\mathcal{A}_H\co \frak{c}\to \R$ is given by \[ \mathcal{A}_H(\gamma)=-\int_{[0,1]}\gamma^{*}\hat{\theta}+\int_{0}^{1}H(t,\gamma(t))dt\]

Specializing to the case of our Hamiltonians $H_f(t,p)=f(|p|)$, the critical points of $\mathcal{A}_{H_f}\co \frak{c}\to \R$ may be described as follows.  Let 
\[ \mathcal{C}_{\frak{c}}(f,Q,x_1)=\{(r,y)\in (0,\infty)\times S_{x_0}^{*}N| (f'(r),y)\in \mathcal{B}_{\frak{c}}(Q,x_1)\} \]   
The critical points $\gamma\co [0,1]\to T^*N$ for $\mathcal{A}_{H_f}$ which represent the homotopy class $\frak{c}$ are all contained in $T^*N\setminus 0_N$, and with respect to our identification of $T^*N\setminus 0_N$ with $(0,\infty)\times S^*N$ are precisely the curves of the form \[ \gamma_{(r,y)}(t)=(r,\psi_{f'(r)t}(y))\qquad \left((r,y)\in \mathcal{C}_{\frak{c}}(f,Q,x_1)\right) \]

Since $\hat{\theta}=r\alpha$, we immediately see that \begin{equation}\label{actionrz} 
\mathcal{A}_{H_f}(\gamma_{(r,y)})=f(r)-rf'(r) 
\end{equation}

\subsection{Grading}\label{grading}

We now discuss the grading on the Floer complex.  The discussion is somewhat simpler when the submanifold $Q$ is a singleton $\{x_0\}$, so we consider that case first, discussing the necessary modifications for more general $Q$ later.    Thus from now through Proposition \ref{gradecomp} we assume that $\hat{L}_0=T_{x_0}^{*}N$, and as always $\hat{L}_1=T_{x_1}^{*}N$, where the points $x_0$ and $x_1$ are nonconjugate.

 First we must also choose a symplectic trivialization for $\gamma_{\frak{c}}^{*}T(T^*N)$ where $\gamma_{\frak{c}}\co [0,1]\to 0_N\subset T^*N$ is the basepoint of $\frak{c}$ chosen earlier; we choose any trivialization with the property that each of the cotangent fibers $T_{\gamma_{\frak{c}}(t)}^{*}N$ is mapped to $\R^n\times\{0\}$. Note that if $v\co [0,1]\times S^1\to T^*N$ is any map such that $v(\{i\}\times S^1)\subset \hat{L}_i$ for $i=0,1$, then we can trivialize $v^*TM$ in such a way that the tangent spaces at $v(s,t)$ to the cotangent fibers $T_{\pi(v(s,t))}^{*}N$ are all sent to $\R^n\times\{\vec{0}\}$.  Therefore the Maslov homomorphism $\mu_{\frak{c}}\co \pi_1(\frak{c},\gamma_{\frak{c}})\to\Z$ vanishes and so our grading will be by $\Z$.  

Before addressing the Maslov indices of the $\gamma_{(r,y)}$ we discuss the positive Reeb flowlines $\eta_{(\tau,y)}\co [0,\tau]\to S^*N$ (defined by $\eta_{(\tau,y)}(s)=\psi_{s}(y)$) for $(\tau,y)\in\mathcal{B}_{\frak{c}}^{+}(\{x_0\},x_1)$.  The contact distribution $\xi=\ker \alpha$ on $S^*N$ is a rank-$(2n-2)$ symplectic vector bundle over $S^*N$ (with fiberwise symplectic form given by $d\alpha$), and for all $s$ the tangent space at $\eta_{(\tau,y)}(s)$ to the sphere cotangent fiber $S_{\pi(\eta_{(\tau,y)}(s))}^{*}N$ is a Lagrangian subspace of $\xi$.  So we can symplectically trivialize $\eta_{(\tau,y)}^{*}\xi$ in a manner which sends these vertical Lagrangian subspaces to $\R^{n-1}\times\{\vec{0}\}$.  With respect to this trivialization, the path $s\mapsto (\psi_{s})_{*}T_{z}S_{x_0}^{*}N$ defines a path of Lagrangian subspaces of $\R^{2n-2}$.  This path has a Robbin-Salamon-Maslov index (with respect to $\R^{n-1}\times\{\vec{0}\}$) \cite{RS}; we denote this index by $\nu(\eta_{(\tau,y)})$.

The index $\nu(\eta_{(\tau,y)})$ is a sum of contributions corresponding to the intersections of $(\psi_{s})_{*}T_{y}S_{x_0}^{*}N$ with $T_{\psi_s(y)}S_{\pi(\psi_s(y))}^{*}N$ as $s$ varies from $0$ to $\tau$.  Recalling that $\psi_s$ is the time-$s$ geodesic flow on $S^*N$, the restriction of the linearization $(\psi_{s})_*$ to $T_{y}S_{x_0}^{*}N$ is given by the derivative of the time-$s$ version of the exponential map. So $(\psi_{s})_{*}$ maps $v\in T_{y}S_{x_0}^{*}N$ to the pair $(J_v(s),J_{v}'(s))$ where $J_v$ is the Jacobi field along the geodesic $\pi\circ \eta_{(\tau,y)}$ having $J_v(0)=0$ and $J_{v}'(0)=v$.  Consequently the intersections $(\psi_{s})_{*}T_{y}S_{x_0}^{*}N\cap T_{\psi_s(y)}S_{\pi(\psi_s(y))}^{*}N$ may be identified with the space of normal Jacobi fields along $\pi\circ\eta_{(\tau,y)}|_{[0,s]}$ which vanish at times $0$ and $s$.  The crossing form of \cite[Theorem 1.1]{RS} at time $s$ is then given by $Q((0,J'_v(s)))=d\alpha((0,J'_v(s)),(J'_v(s),J''_v(s)))=|J'_v(s)|^2$; in particular the crossing form is positive definite.

So according to the definition in \cite{RS}, the Maslov index is given by \[ \nu(\eta_{(\tau,y)})=\frac{n-1}{2}+\sum_{0<s<\tau}\dim\left( (\psi_{s})_{*}T_{y}S_{x_0}^{*}N\cap T_{\psi_s(y)}S_{\pi(\psi_s(y))}^{*}N \right)  \]  (the $\frac{n-1}{2}$ is the contribution from $s=0$; there is no contribution from $s=\tau$ since $x_0$ and $x_1$ are nonconjugate).  In other words $\nu(\eta_{(\tau,y)})-\frac{n-1}{2}$ is the number of conjugate points along the geodesic $\pi\circ\eta_{(\tau,y)}$, counted with multiplicity.  According to the Morse Index Theorem \cite[Theorem V.15.2]{Morse}, this latter quantity is precisely the Morse index of the geodesic $\pi\circ\eta_{(\tau,y)}$ (after it is reparametrized to have duration $1$), considered as a critical point of the energy functional $\eta\mapsto \int |\eta'|^2$ on paths from $x_0$ to $x_1$; thus \begin{equation} \label{numorse} \mbox{For $(\tau,y)\in\mathcal{B}_{\frak{c}}^{+}(\{x_0\},x_1)$, }\nu(\eta_{(\tau,y)})=\frac{n-1}{2}+Morse(\pi\circ\eta_{(\tau,y)}) \end{equation} where ``$Morse$'' denotes the aforementioned Morse index.

Now consider a critical point $\gamma_{(r_0,y)}$ of $\mathcal{A}_{H_f}$ with $f'(r_0)>0$.  Thus, identifying $T^*N\setminus 0_N$ with $(0,\infty)\times S^*N$ in our usual way, \[ \gamma_{(r_0,y)}(s)=(r_0,\eta_{(f'(r_0),y)}(f'(r_0)s))=(r_0,\psi_{f'(r_0)s}(y)) \]  To determine the grading of $\gamma_{(r_0,y)}$ we must first determine the Maslov index of the path $s\mapsto (\phi_{H_f}^{s})_{*}T_{(r_0,y)}T^{*}_{x_0}N$ relative to the path of cotangent fibers $s\mapsto T_{\gamma_{(r_0,y)}(s)}T^{*}_{\pi(\gamma_{(r_0,y)})}N$.  

The splitting $T^*N\setminus 0_N\cong (0,\infty)\times S^*N$ gives rise to a splitting \[ T_{(r_0,y)}T^{*}_{x_0}N\cong \R\partial_r \oplus T_y S^{*}_{x_0}N \] 
If $z\in   T_{y}S^{*}_{x_0}N $ then $(\phi_{H_f}^{s})_{*}(0,z)=(0,(\psi_{f'(r_0)s})_*z)$.  Meanwhile the element $\partial_r\in T_{(r_0,y)}T^{*}_{x_0}N$ has \[ (\phi_{H_f}^{s})_{*}\partial_r=\left(\partial_r,sf''(r_0)\eta'_{(f'(r_0),y)}(f'(r_0)s)\right).\]   In particular (under the transversality condition (\ref{transcond})) if $s>0$ then $\pi_*(\phi_{H_f}^{s})_{*}\partial_r\neq 0$, and so the intersections of $(\phi_{H_f}^{s})_{*}T_{(r_0,y)}T^{*}_{x_0}N$ with $T_{\gamma_{(r_0,y)}(s)}T^{*}_{\pi(\gamma_{(r_0,y)})}N$ are in one-to-one correspondence with the intersections of $(\psi_{f'(r_0)s})_*T_{y}S^{*}_{x_0}N$ with $T_{\psi_{f'(r_0)s}(y)}S_{\pi(\psi_{f'(r_0)s}(y))}^{*}N$; moreover the Robbin--Salamon crossing forms are identical under this correspondence.  On the other hand at $s=0$ the vector $\partial_r$ provides a single additional dimension of intersection for $\gamma_{(r_0,y)}$ in comparison to $\eta_{(f'(r_0),y)}$. We have \[ \left.\frac{d}{ds}\right|_{s=0}(\phi_{H_f}^{s})_{*}\partial_r=(0,f''(r_0)\eta'_{(f'(r_0),y)}(0)) \] with respect to the splitting $(0,\infty)\times S^*N$.  Now with the respect to the (horizontal,vertical) splitting of the sphere (co)tangent bundle we have $\eta'_{(f'(r_0),y)}(0)=(y,0)$.  Meanwhile in the (horizontal,vertical) splitting the tangent vector $\partial_r$ at $\gamma_{r_0,y}(0)$ corresponds to $(0,y)$.  Thus the crossing form evaluates on $\partial_r$ at $s=0$ as $Q(\partial_r)=\omega((0,y),(f''(r_0)y,0))=f''(r_0)|y|^2$.

Consequently when $f'(r_0)>0$ the Maslov index of $s\mapsto (\phi_{H_f}^{s})_{*}T_{(r_0,y)}T^{*}_{x_0}N$ is $\nu(\eta_{(f'(r_0),y)})+\frac{1}{2}$ if $f''(r_0)>0$ and is $\nu(\eta_{(f'(r_0),y)})-\frac{1}{2}$ if $f''(r_0)<0$.

There remains the case that $f'(r_0)<0$.  In this case, where $h\co T^*N\to T^*N$ is again given (with respect to the (horizontal,vertical) splitting) by $h(x,p)=(x,-p)$, we have (with respect to the splitting $T^*N\setminus 0_N=(0,\infty)\times S^*N$), \[ \gamma_{(r_0,y)}(s)=(r_0,\psi_{f'(r_0)s}(y))=\left(r_0,h(\psi_{-f'(r_0)s}(h(y)))\right)=(r_0,h(\eta_{(-f'(r_0),h(y))}(-f'(r_0)s))).\]  So with the exception of the term $\pm \frac{1}{2}$ coming from $f''(r_0)$ at $s=0$, the contributions to the Maslov index of $(\phi_{H_f}^{s})_{*}T_{(r_0,y)}T^{*}_{x_0}N$ correspond to the contributions to the Maslov index of $s\mapsto \psi_{-f'(r_0)s}(h(y))$ (\emph{i.e.}, to $\nu(\eta_{(-f'(r_0),h(y))})$); however the conjugation by the antisymplectic involution $h$ negates the crossing forms and so causes the respective contributions to be opposite to each other.  Meanwhile the same argument as in the previous case shows that the contribution of $\partial_r$ at $s=0$ is $\frac{1}{2}$ if $f''(r_0)>0$ and $-\frac{1}{2}$ if $f''(r_0)<0$.  

So when $f'(r_0)<0$ the Maslov index of $s\mapsto (\phi_{H_f}^{s})_*T_{(r_0,y)}T^*_{x_0}N$ is $-\nu(\eta_{-f'(r_0),y})+\frac{1}{2}$ if $f''(r_0)>0$ and is $-\nu(\eta_{-f'(r_0),y})-\frac{1}{2}$ if $f''(r_0)<0$.

In any case, projecting down to $N$ we see that $\pi\circ \gamma_{(r_0,y)}$ and $\pi\circ \eta_{(|f'(r_0)|,y)}$ represent the same geodesic (modulo positive constant time rescaling) from $x_0$ to $x_1$.  So we have, in view of (\ref{numorse}) and our grading conventions (\ref{gradeconv}):

\begin{prop} \label{gradecomp} For each element $(r,y)\in \mathcal{C}_{\frak{c}}(f,\{x_0\},x_1)$, the corresponding element $\gamma_{(r,y)}\in Crit(\mathcal{A}_{H_f})$ has Floer-theoretic grading given by \[ \mu(\gamma_{(r,y)})=\left\{\begin{array}{ll} -Morse(\pi\circ\gamma_{(r,y)}) & f'(r)>0,\,f''(r)>0 \\
1-Morse(\pi\circ\gamma_{(r,y)}) & f'(r)>0,\,f''(r)<0  \\ n-1+Morse(\pi\circ\gamma_{(r,y)}) & f'(r)<0,\,f''(r)>0 \\ n+Morse(\pi\circ\gamma_{(r,y)}) & f'(r)<0,\,f''(r)<0 \end{array}\right. \] 
\end{prop}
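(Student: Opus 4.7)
The four cases in the proposition are going to follow by arithmetic from three ingredients that have (largely) been set up above: the four-case computation of the Robbin--Salamon--Maslov index $\mu_{RS}(\Gamma,\R^n\times\{\vec{0}\})$ of the path $\Gamma(s)=(\phi_{H_f}^s)_* T_{(r_0,y)}T^*_{x_0}N$, the identity (\ref{numorse}) relating $\nu(\eta_{(\tau,y)})$ to a Morse index of a geodesic, and the grading convention (\ref{gradeconv}). The plan is to assemble these. Since $\pi\circ\gamma_{(r,y)}$ parametrizes the same geodesic from $x_0$ to $x_1$ as $\pi\circ\eta_{(|f'(r)|,y)}$ up to a positive constant reparametrization (and, when $f'(r)<0$, up to orientation reversal, neither of which affects the Morse index), the Morse index appearing in (\ref{numorse}) may be freely identified with $Morse(\pi\circ\gamma_{(r,y)})$ in the final formula.

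Concretely, for $f'(r)>0$ and $f''(r)>0$ the preceding analysis yields $\mu_{RS}(\Gamma,\R^n\times\{\vec{0}\})=\nu(\eta_{(f'(r),y)})+\frac{1}{2}$, so (\ref{gradeconv}) and (\ref{numorse}) give
\[ \mu(\gamma_{(r,y)})=\frac{n}{2}-\left(\frac{n-1}{2}+Morse(\pi\circ\gamma_{(r,y)})+\frac{1}{2}\right)=-Morse(\pi\circ\gamma_{(r,y)}).\]
Flipping the sign of $f''(r)$ flips the $\partial_r$ contribution at $s=0$ from $+\frac{1}{2}$ to $-\frac{1}{2}$, which replaces $-Morse$ by $1-Morse$. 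In the case $f'(r)<0$ the path $\Gamma$ is conjugate by the antisymplectic involution $h$ to a path arising from the time-reversed Reeb flow, so the interior crossing forms switch sign and the Reeb contribution to $\mu_{RS}$ becomes $-\nu(\eta_{(-f'(r),h(y))})$; the endpoint $\partial_r$-contribution still has sign $\text{sign}(f''(r))/2$. Substituting (\ref{numorse}) into $\frac{n}{2}-\mu_{RS}$ then yields $n-1+Morse$ and $n+Morse$ in the two $f''$-subcases, completing the list.

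The main subtlety I anticipate is the bookkeeping at the initial crossing $s=0$: the $n$-dimensional intersection $\Gamma(0)\cap(\R^n\times\{\vec{0}\})$ must be split as the $(n-1)$-dimensional contact piece $T_yS^*_{x_0}N$, on which the crossing form is positive definite by the Jacobi-field computation underlying (\ref{numorse}), together with the one-dimensional $\R\partial_r$ summand, whose crossing form has sign $\text{sign}(f''(r_0))$ as recorded by the evaluation $Q(\partial_r)=f''(r_0)|y|^2$. Only these two half-signature contributions, taken together, give the correct endpoint contribution $\frac{n-1}{2}\pm\frac{1}{2}$ (respectively $-\frac{n-1}{2}\pm\frac{1}{2}$ in the $f'<0$ case) to $\mu_{RS}$, and they account for the parity shifts between neighboring cases in the stated formula. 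Finally, I would check compatibility of trivializations: the trivialization of the contact bundle implicit in (\ref{numorse}) sends vertical contact Lagrangians to $\R^{n-1}\times\{\vec{0}\}$, and extending by the $\R\partial_r$ factor in the splitting $(0,\infty)_r\times S^*N$ yields a trivialization of $\Gamma^{*}T(T^*N)$ that agrees with $\tau_{\frak{c}}$ along $\gamma_{\frak{c}}$ because $\gamma_{\frak{c}}\subset 0_N$ and $\tau_{\frak{c}}$ was chosen to map cotangent fibers to $\R^n\times\{\vec{0}\}$; since any interpolating homotopy from $\gamma_{\frak{c}}$ to $\gamma_{(r,y)}$ can be taken to preserve this product splitting, no additional Maslov monodromy is introduced.
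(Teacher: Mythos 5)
Your assembly is correct and matches the paper's argument in both structure and detail: you combine the computation of $\mu_{RS}(\Gamma,\R^n\times\{\vec{0}\})$ in four cases (positive or negative Reeb contribution $\pm\nu(\eta)$, plus the endpoint contribution $\tfrac{1}{2}\operatorname{sign}(f''(r_0))$ from the $\partial_r$ crossing form) with the identity (\ref{numorse}) and the normalization (\ref{gradeconv}), exactly as the paper does immediately before stating the Proposition. The one place where your account is slightly looser than the paper's setup is the trivialization compatibility: the splitting $(0,\infty)_r\times S^*N$ does not extend over the zero section where $\gamma_{\frak{c}}$ lives, so one cannot literally take an interpolating homotopy ``preserving the product splitting''; instead, the well-definedness of the computation rests on the fact that $\tau_{\frak{c}}$ and any trivialization over the homotopy $v$ can be chosen to send all cotangent fibers to $\R^n\times\{\vec{0}\}$, together with the vanishing of the Maslov homomorphism $\mu_{\frak{c}}$ established just before. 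Your final conclusion and case-by-case arithmetic are nonetheless correct.
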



We now turn to the more general situation in which the submanifold $\hat{L}_0=T^{*}_{x_0}N$ is replaced by the conormal bundle $\nu^*Q$ of a smooth, compact $d$-dimensional submanifold $Q$ of $N$.  We continue to set $\hat{L}_1=T_{x_1}^{*}N$, and now require that $x_1$ is not a focal point of $Q$ (in  other words, is not a critical value of the restriction of the exponential map to $\nu^*Q$).  

A few subtleties arise in this extension because the conormal bundle $\nu^*Q$ typically has more complicated geometry than a cotangent fiber.  In particular the second fundamental form of $Q$ gives rise, for every $(x,p)\in T^*N|_Q$, to a symmetric ``shape operator'' $\Sigma_{(x,p)}\co T_x Q\to T_x Q$ defined by the property that, for $v,w\in T_{x}Q$,  \[ g(\Sigma_{(x,p)}v,w)=p\left((\nabla_v W)^{\perp}\right) \] where $W$ is a vector field on a neighborhood of $x$ in $N$ which is tangent to $Q$ and has $W(x)=w$,  $\nabla$ is the Levi-Civita connection associated to $g$ on $N$, and $\perp$ denotes projection of a vector in $TN|_Q$ to the $g$-orthogonal complement $TQ^{\perp}$ of $TQ$.  We extend $\Sigma_{(x,p)}$ to a symmetric operator $T_x N\to T_xN$ by setting it equal to zero on $T_xQ^{\perp}$.  

The Levi-Civita connection $\nabla$ (together with the $g$-identification of $T^*N$ with $TN$) gives us a splitting $T(T^*N)=T^{vt}\oplus T^{hor}$ where $T^{vt}$ is the vertical tangent space of the projection $T^*N\to N$ and where, for a section $s\co N\to T^*N$ with $s(x)=(x,p)$ and for $v\in T_x N$, we have $s_*v\in T_{(x,p)}^{hor}$ if and only if, viewing $s$ as a vector field, $(\nabla_v s)(x)=0$.    For $(x,p)\in T^*N$, we may identify both   $T^{hor}_{(x,p)}$ and $T^{vt}_{(x,p)}$ with $T_{x}N$. Indeed, the projection-induced map $\pi_*\co T_{(x,p)}T^*N\to T_x N$ restricts as an isomorphism $T^{hor}_{(x,p)}\cong T_x N$; we will denote the inverse of this isomorphism by $v\mapsto v^{\#}$ (so $v^{\#}$ is the horizontal lift of $v$ in the standard sense).  Meanwhile $T^{vt}_{(x,p)}$ is identified with $T_{x}^{*}N$ by the identification of the tangent space to a vector space with the vector space, and $T_{x}^{*}N$ is identified with $T_{x} N$ by the metric $g$.  For $v\in T_x N$ we will denote the corresponding element of $T^{vt}_{(x,p)}$ by $v^{\flat}$.

 Now for $(x,p)\in \nu^*Q$, define \[ \tilde{T}^{hor}_{(x,p)}=\{v^{\#}-(\Sigma_{(x,p)}v)^{\flat}|v\in T_x N\}.\]  We then have a decomposition of bundles \[ T(T^*N)|_{\nu^*Q}=T^{vt}\oplus \tilde{T}^{hor},\] and the fact that the $\Sigma_{(x,p)}$ are symmetric translates to the statement that $\tilde{T}^{hor}$ is (like both $T^{hor}$ and $T^{vt}$) a \emph{Lagrangian} subbundle of $T(T^*N)|_{\nu^*Q}$ with respect to the standard symplectic form $d\hat{\theta}$. For $v\in T_x N$ and $(x,p)\in \nu^*Q$ define $v^{\widetilde{\#}}=v^{\#}-(\Sigma_{(x,p)}v)^{\flat}$; in other words, $v^{\widetilde{\#}}$ is the unique element of $\tilde{T}_{(x,p)}^{hor}$ that projects to $v$.

A straightforward calculation shows that the tangent space to the conormal bundle $\nu^*Q$ is given as follows: \begin{equation}\label{nusplit} T\nu^*Q=\{v^{\flat}|v\in TQ^{\perp}\}\oplus \{v^{\widetilde{\#}}|v\in TQ\}=:(TQ^{\perp})^{\flat}\oplus (TQ)^{\widetilde{\#}}. \end{equation}

\begin{prop} \label{At} There is a smooth family of Lagrangian subbundles $A_t\leq T(T^*N)|_{\nu^*Q}$ ($0\leq t\leq 1$) such that: \begin{itemize}
\item $A_0=T\nu^*Q$
\item $A_1=T^{vt}$
\item For each $(x,p)\in \nu^*Q$, the Robbin--Salamon--Maslov index of the path $\{(A_t)_{(x,p)}\}_{0\leq t\leq 1}$ with respect to the constant path $T_{(x,p)}\nu^*Q$ is given by \[ \mu_{RS}\left((A_t)_{(x,p)},T_{(x,p)}\nu^*Q\right)=\frac{d}{2}\] where $d=\dim Q$.
\end{itemize}
\end{prop}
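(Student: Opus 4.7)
The plan is to build $A_t$ by a straightforward linear interpolation. In view of (\ref{nusplit}), the natural approach is to hold the $(TQ^\perp)^\flat$ summand of $T\nu^*Q$ fixed and to interpolate $(TQ)^{\widetilde{\#}}$ linearly to $(TQ)^\flat$; concretely, for $(x,p)\in \nu^*Q$ I would set
\[ (A_t)_{(x,p)} = (T_xQ^\perp)^\flat \oplus \left\{(1-t)v^{\widetilde{\#}} + tv^\flat \;\middle|\; v \in T_xQ\right\}. \]
The endpoint conditions $A_0 = T\nu^*Q$ and $A_1 = T^{vt}$ are immediate from (\ref{nusplit}); smoothness is clear; and $A_t$ has constant rank $n$ since the parametrization $v\mapsto (1-t)v^{\widetilde{\#}} + tv^\flat$ is injective for each $t\in[0,1]$.

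The first step is to verify that each $A_t$ is Lagrangian. Using the formula $\omega(a^\# + b^\flat, c^\# + d^\flat) = g(d,a) - g(b,c)$, the pairings among generators of $A_t$ split into three cases: pairings within $(TQ^\perp)^\flat$ are trivially zero; cross-pairings between $u^\flat$ (with $u\in TQ^\perp$) and the $TQ$-component generators vanish by the $g$-orthogonality $u\perp v$; and pairings of two $TQ$-component generators reduce, after expansion, to a bilinear expression in $\Sigma_{(x,p)}$ that cancels by its symmetry.

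For the Maslov index, the key observation is that the $g$-orthogonal decomposition $T_xN = T_xQ\oplus T_xQ^\perp$ induces a symplectic direct-sum decomposition
\[ T_{(x,p)}T^*N \cong (T_xQ\oplus T_xQ)\oplus_\omega (T_xQ^\perp\oplus T_xQ^\perp) \]
respected by both $A_t$ and $V:=T\nu^*Q$. Writing $A_t = A_t^{(1)}\oplus A_t^{(2)}$ and $V = V^{(1)}\oplus V^{(2)}$ accordingly, one has $A_t^{(2)} = V^{(2)} = (T_xQ^\perp)^\flat$ independent of $t$, and by direct-sum additivity of the Robbin--Salamon--Maslov index, $\mu_{RS}(A_\bullet,V) = \mu_{RS}(A_\bullet^{(1)}, V^{(1)})$, reducing the calculation to the $2d$-dimensional space $T_xQ\oplus T_xQ$.

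Finally, a short linear-algebra check shows $A_t^{(1)}\cap V^{(1)} = \{0\}$ for $t>0$ (equating $(1-t)v^\# + (tv - (1-t)\Sigma v)^\flat$ with $w^\# - (\Sigma w)^\flat$ forces $tv = 0$), so the only crossing is the endpoint $t = 0$. Parametrizing $A_t^{(1)}$ by $\phi_t(v) = (1-t)v^{\widetilde{\#}} + tv^\flat$ with $v\in T_xQ$, the Robbin--Salamon crossing form at $t=0$ on $A_0^{(1)} = V^{(1)}$ is $Q(v_1,v_2) = \omega(\phi_0(v_1),\dot\phi_0(v_2))$. Substituting $\dot\phi_0(v_2) = -v_2^\# + (v_2+\Sigma_{(x,p)}v_2)^\flat$ and using the symmetry of $\Sigma_{(x,p)}$ yields $Q(v_1,v_2) = g(v_1,v_2)$, positive definite of rank $d$. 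As an endpoint contribution this gives $\tfrac{1}{2}\mathrm{sign}(Q) = d/2$, completing the calculation. The main (mild) subtlety is the constant intersection $(TQ^\perp)^\flat$, which would give a degenerate crossing if treated naively, but is disposed of cleanly by the symplectic splitting above.
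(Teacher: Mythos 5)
Your proof is correct and follows essentially the same strategy as the paper's: hold the $(TQ^{\perp})^{\flat}$ summand fixed, use the Robbin--Salamon product (direct-sum) axiom to reduce to the $2d$-dimensional symplectic piece, and evaluate the positive-definite crossing form at $t=0$. The only difference is the choice of path from $TQ^{\widetilde{\#}}$ to $TQ^{\flat}$: the paper rotates via $e^{\pi t J/2}$, where $J$ is the compatible almost complex structure interchanging $T^{vt}$ and $\tilde{T}^{hor}$ (so the Lagrangian condition is automatic), whereas you interpolate linearly, which necessitates the extra symmetry-of-$\Sigma$ verification you supplied.
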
  

\begin{proof} Define an endomorphism $J\co T(T^*N)|_{\nu^*Q}\to T(T^*N)|_{\nu^*Q}$ by $Jv^{\flat}=v^{\tilde{\#}}$ and $Jv^{\tilde{\#}}=-v^{\flat}$.  Thus $J$ is a $d\hat{\theta}$-compatible almost complex structure on $T(T^*N)|_{\nu^*Q}$, mapping $T^{vt}$ to $\tilde{T}^{hor}$ and vice versa. 

For any $s\in \R$, define an endomorphism $e^{sJ}\co T(T^*N)|_{\nu^*Q}\to T(T^*N)|_{\nu^*Q}$ by $e^{sJ}w=(\cos s)w+(\sin s)Jw$.  Then let \[ A_t=(TQ^{\perp})^{\flat}\oplus e^{\frac{\pi t}{2}J}\left(TQ^{\widetilde{\#}}\right) \]  The $A_t$ are easily seen to be Lagrangian, and we have $A_0=T\nu^*Q$ by (\ref{nusplit}), while $A_1=(TQ^{\perp})^{\flat}\oplus TQ^{\flat}=T^{vt}$.  Meanwhile the product axiom for the Robbin--Salamon--Maslov index shows that $\mu_{RS}((A_t)_{(x,p)},T_{(x,p)}\nu^*Q)$ is equal to the Robbin--Salamon--Maslov index of the path $t\mapsto e^{\frac{\pi t}{2}J}\left(TQ^{\widetilde{\#}}_{(x,p)}\right)$ with respect to $TQ^{\widetilde{\#}}_{(x,p)}$ in the $2d$-dimensional symplectic vector space
$TQ^{\flat}_{(x,p)} \oplus TQ^{\widetilde{\#}}_{(x,p)}$.  The only intersections relevant to this index are those at $t=0$, and at $t=0$ the crossing form is easily seen to be positive definite, so that the index is indeed $\frac{d}{2}$. 
\end{proof}

With this preparation we can discuss the gradings of the Floer complexes $CF_{\frak{c}}(\nu^*Q,T_{x_1}^{*}N;H_f)$.  Recall that just before the start of Section \ref{grading} we chose a representative $\gamma_{\frak{c}}$ of the homotopy class $\frak{c}\in \pi_0(\mathcal{P}(\nu^*Q,T_{x_1}^{*}N))$, such that $\gamma_{\frak{c}}(t)\in 0_N$ for all $t$ and $\gamma_{\frak{c}}|_{[0,1/2]}$ is constant.  For the grading we must also fix a suitable trivialization $\tau_{\frak{c}}$ of $\gamma_{\frak{c}}^{*}T(T^*N)$.    We choose this symplectic trivialization $\tau_{\frak{c}}\co \gamma_{\frak{c}}^{*}T(T^*N)\to [0,1]\times\R^{2n}$ in such a way that \[ \tau_{\frak{c}}^{-1}(\{t\}\times \R^n\times\{\vec{0}\})=\left\{\begin{array}{ll}(A_{2t})_{\gamma_{\frak{c}}(0)} & 0\leq t\leq\frac{1}{2} \\ T^{vt}_{\gamma_{\frak{c}(t)}} & \frac{1}{2}\leq t\leq 1\end{array}\right. \]

Any element of $\pi_1(\frak{c},\gamma_{\frak{c}})$ can be represented by a map $v\co S^1\times [0,1]\to T^*N$ such that $v(\theta,t)\in \nu^*Q$ for $\theta\in S^1$ and $0\leq t\leq 1/2$, while $v(\theta,1)\in T_{x_1}^{*}N$.  Then $v^*T(T^*N)$ has a Lagrangian subbundle $\Lambda$ whose fiber over $(\theta,t)$ is given by $(A_{2t})_{v(\theta,t)}$ for $0\leq t\leq 1/2$ and by $T^{vt}_{v(\theta,t)}$ for $1/2\leq t\leq 1$.  Since $\Lambda$ restricts to $S^1\times\{0\}$ as the pullback of $T\nu^*Q$ and to $S^1\times\{1\}$ as the pullback of $T(T_{x_1}^{*}N)$, it follows that the Maslov homomorphism $\mu_{\frak{c}}\co \pi_1(\frak{c},\gamma_{\frak{c}})\to\Z$ vanishes on $[v]$.  Thus the grading of the Floer complexes $CF_{\frak{c}}(\nu^*Q,T_{x_1}^{*}N;H)$ is by $\Z$.

Where  $S\nu^*Q=S^*N\cap \nu^*Q$ is the sphere conormal bundle of $Q$ and where $\psi_t\co S^*N\to S^*N$ is the Reeb flow, recall the sets \[ \mathcal{B}_{\frak{c}}(Q,x_1)=\{(\tau,y)\in \R\times S\nu^*Q|\psi_{\tau}(y)\in S^*_{x_1}N,\,[s\mapsto \psi_{s\tau}(y)]\in\frak{c}\}\] and \[ \mathcal{C}_{\frak{c}}(f,Q,x_1)=\{(r,y)\in  (0,\infty)\times S\nu^*Q|(f'(r),y)\in \mathcal{B}_{\frak{c}}(Q,x_1)\}.\]  The critical points of $\mathcal{A}_{H_f}\co\frak{c}\to \R$ are the paths \[ \gamma_{(r,y)}(t)=r\psi_{f'(r)t}(y) \qquad (r,y)\in \mathcal{C}_{\frak{c}}(f,Q,x_1)\] 

It follows from our choice of $\tau_{\frak{c}}$ together with the catenation and homotopy properties of the Robbin--Salamon--Maslov index $\mu_{RS}$ for pairs of Lagrangian arcs (\cite[Section 3]{RS}) that \begin{equation}\label{combine} \mu(\gamma_{(r,y)})=\frac{n}{2}-\left(\mu_{RS}(T_{ry}\nu^*Q,t\mapsto A_t)+\mu_{RS}(t\mapsto (\phi_{H_f}^{t})_*T_{ry}\nu^*Q,T^{vt}_{\phi_{H_f}^{t}(ry)})\right). \end{equation}  By Proposition \ref{At} (and the fact that  $\mu_{RS}$ is antisymmetric in its two arguments) we have  $\mu_{RS}(T_{ry}\nu^*Q,t\mapsto A_t)=-\frac{d}{2}$.  Meanwhile the computation of $\mu_{RS}(t\mapsto (\phi_{H_f}^{t})_*T_{ry}\nu^*Q,T^{vt}_{\phi_{H_f}^{t}(ry)})$ essentially repeats what was done in the special case that $Q=\{x_0\}$.  Intersections of  $(\phi_{H_f}^{t})_*T_{ry}\nu^*Q$ with  $T^{vt}_{\phi_{H_f}^{t}(ry)}$ where $t>0$ correspond to focal points of $Q$ along the geodesic $\pi\circ\gamma_{(r,y)}$; these contribute positively to $\mu_{RS}$ when $f'(r)>0$ and negatively to $\mu_{RS}$ when $f'(r)<0$, in each case according to their multiplicity.  Now \cite[Theorem V.15.2]{Morse} shows that the sum of the multiplicities of the focal points along $\pi\circ\gamma_{(r,y)}$ is equal to the Morse index $Morse(\pi\circ\gamma_{(r,y)})$  of the geodesic $\pi\circ\gamma_{(r,y)}$, considered as a critical point of the energy functional on paths from $Q$ to $x_1$.  Thus $\mu_{RS}(t\mapsto (\phi_{H_f}^{t})_*T_{ry}\nu^*Q,T^{vt}_{\phi_{H_f}^{t}(ry)})$ is equal to $sign(f'(r))\cdot Morse(\pi\circ\gamma_{(r,y)})$ plus one-half of the signature of the crossing form at $t=0$.  This latter signature is computed just as in the previously-considered case that $Q=\{x_0\}$; the only difference is that the dimension of $T_{ry}\nu^*Q\cap T^{vt}_{ry}$ is $n-d$ rather than $n$.  The crossing form evaluates with the same sign as $f''(r)$ on the radial tangent vector, and with the same sign as $f'(r)$ on each nonzero element of the orthogonal complement of the radial tangent vector.  So we obtain \[ \mu_{RS}(t\mapsto (\phi_{H_f}^{t})_*T_{ry}\nu^*Q,T^{vt}_{\phi_{H_f}^{t}(ry)})=\left\{\begin{array}{ll} \frac{n-d}{2}+Morse(\pi\circ\gamma_{(r,y)}) & f'(r)>0,f''(r)>0 \\ 
\frac{n-d}{2}-1+Morse(\pi\circ\gamma_{(r,y)}) & f'(r)>0,f''(r)<0 \\ 
\frac{d-n}{2}+1-Morse(\pi\circ\gamma_{(r,y)}) & f'(r)<0,f''(r)>0 \\
\frac{d-n}{2}-Morse(\pi\circ\gamma_{(r,y)}) & f'(r)<0,f''(r)<0
\end{array}\right.\]
So by (\ref{combine}) we may generalize Proposition \ref{gradecomp} as follows: 

\begin{prop}\label{gradecompQ} For each element $(r,y)\in \mathcal{C}_{\frak{c}}(f,Q,x_1)$, the corresponding element $\gamma_{(r,y)}\in Crit(\mathcal{A}_{H_f})$ has Floer-theoretic grading given by \[ \mu(\gamma_{(r,y)})=\left\{\begin{array}{ll}
d-Morse(\pi\circ\gamma_{(r,y)}) & f'(r)>0,f''(r)>0  \\
d+1-Morse(\pi\circ\gamma_{(r,y)}) & f'(r)>0,f''(r)<0 \\
n-1+Morse(\pi\circ\gamma_{(r,y)}) & f'(r)<0,f''(r)>0 \\
n+Morse(\pi\circ\gamma_{(r,y)}) & f'(r)<0,f''(r)<0
\end{array}\right.\]
\end{prop}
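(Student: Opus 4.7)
The plan is to reduce the computation to the two ingredients already identified in the discussion preceding the statement: the Maslov contribution from interpolating $T\nu^*Q$ to the vertical subbundle $T^{vt}$ via the path $A_t$ of Proposition \ref{At}, and the Maslov contribution from the linearized Hamiltonian flow along $\gamma_{(r,y)}$ measured against vertical fibers. Concretely, using the catenation property of the Robbin--Salamon index together with our choice of trivialization $\tau_{\frak{c}}$ (which on $[0,1/2]$ sends $\R^n\times\{\vec 0\}$ to $A_{2t}$ and on $[1/2,1]$ sends it to $T^{vt}$), one obtains the decomposition (\ref{combine}). By Proposition \ref{At} (and antisymmetry of $\mu_{RS}$) the first term equals $-d/2$, so everything reduces to computing $\mu_{RS}\bigl(t\mapsto (\phi_{H_f}^t)_*T_{ry}\nu^*Q,\,T^{vt}_{\phi_{H_f}^t(ry)}\bigr)$.

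For this second index I would separate the crossings at $t>0$ from the crossing at $t=0$. At each interior time $t$, the linearization $(\phi_{H_f}^t)_*$ on $T_{ry}\nu^*Q$ acts by the geodesic-flow Jacobi map on the $S\nu^*Q$-directions together with an additional spreading in the radial direction; intersections of $(\phi_{H_f}^t)_*T_{ry}\nu^*Q$ with $T^{vt}$ then correspond to $Q$-Jacobi fields along the geodesic $\pi\circ\gamma_{(r,y)}$ vanishing at a focal time of $Q$. By the Morse Index Theorem \cite[Theorem V.15.2]{Morse}, summed with multiplicity these give exactly $Morse(\pi\circ\gamma_{(r,y)})$, and the crossing forms are positive definite (as computed in the singleton case), so the net interior contribution is $\mathrm{sign}(f'(r))\cdot Morse(\pi\circ\gamma_{(r,y)})$.

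The crossing at $t=0$ is a now purely linear computation in the $2n$-dimensional space $T_{ry}(T^*N)$ with reference Lagrangian $T^{vt}_{ry}$. Here $T_{ry}\nu^*Q\cap T^{vt}_{ry}=(T_xQ^\perp)^\flat$ has dimension $n-d$, and I can split the crossing-form evaluation into the radial tangent vector and the $(n-d-1)$-dimensional orthogonal complement within $(T_xQ^\perp)^\flat$, exactly as in the computation preceding Proposition \ref{gradecomp}: the radial direction yields signature $\mathrm{sign}(f''(r))$, while each of the remaining $n-d-1$ orthogonal directions yields signature $\mathrm{sign}(f'(r))$. One half of the signature therefore equals
\[
\tfrac{1}{2}\bigl((n-d-1)\,\mathrm{sign}(f'(r))+\mathrm{sign}(f''(r))\bigr),
\]
and case analysis on the signs of $f'(r),f''(r)$ gives the four values $\frac{n-d}{2},\frac{n-d}{2}-1,\frac{d-n}{2}+1,\frac{d-n}{2}$ displayed in the paragraph preceding the proposition.

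Adding the interior and $t=0$ contributions, substituting into (\ref{combine}) together with $\mu_{RS}(T_{ry}\nu^*Q,t\mapsto A_t)=-d/2$, and simplifying $\frac{n}{2}-\bigl(-\frac{d}{2}\bigr)$ with each of the four boundary contributions and the corresponding signed Morse term, yields the four stated values of $\mu(\gamma_{(r,y)})$. The only delicate step is keeping the signs in the crossing-form computation consistent with our sign convention (\ref{gradeconv}) for the Floer grading; I expect this bookkeeping, rather than any novel geometric input, to be the main source of potential error, and I would verify it by checking that setting $d=n-1$ (so $Q$ is a hypersurface, where $\nu^*Q$ is essentially one-dimensional in the fiber) and $d=0$ (recovering Proposition \ref{gradecomp}) both reproduce the expected answers.
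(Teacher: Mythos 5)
Your proposal is correct and follows essentially the same route as the paper: reduce via the catenation property and the choice of $\tau_{\frak{c}}$ to formula (\ref{combine}), use Proposition \ref{At} for the $-d/2$ term, and compute $\mu_{RS}\bigl(t\mapsto (\phi_{H_f}^t)_*T_{ry}\nu^*Q,\,T^{vt}\bigr)$ by separating the interior crossings (which give $\mathrm{sign}(f'(r))\cdot Morse$ by the Morse index theorem) from the $t=0$ crossing on the $(n-d)$-dimensional space $(T_xQ^\perp)^\flat$, whose signature splits as $\mathrm{sign}(f''(r))$ on the radial direction plus $(n-d-1)\,\mathrm{sign}(f'(r))$ on the orthogonal complement. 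This is precisely the decomposition the paper uses in the paragraph preceding the proposition, and your arithmetic reproduces the four cases correctly.
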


\subsection{An assumption on the geodesics of $N$}\label{assumption}

Fix now a compact  $d$-dimensional submanifold $Q\subset N$ and a point $x_1\in N$ which neither lies in $Q$ nor is a focal point of $Q$.  Let \[ \mathcal{G}(Q,x_1)=\left\{\gamma\co [0,1]\to N\left|\begin{array}{cc}\gamma(0)\in Q,\,\gamma'(0)\in T_{\gamma(0)}Q^{\perp},\,\gamma(1)=x_1\\ \gamma\mbox{ is a geodesic }   \end{array}\right.\right\} \] Regarding any $\gamma\in \mathcal{G}(Q,x_1)$ as a path in the zero section $0_N\subset N$, $\gamma$ represents a class $[\gamma]\in \pi_0(\mathcal{P}(\nu^*Q,T_{x_1}^{*}N))$.  Moreover $\gamma$ is a critical point of the energy functional $\eta\mapsto\int|\eta'|^2$ on the space $\mathcal{P}_N(Q,x_1)$ of paths from $P$ to $x_1$, and this energy functional is Morse since we assume that $x_1$ is not a focal point of $Q$.  Accordingly any $\gamma\in \mathcal{G}(Q,x_1)$ has a Morse index, which as before we denote by $Morse(\gamma)$.  By \cite[Theorem V.15.2]{Morse} this Morse index is equal to the number of focal points along $\gamma$, counted with multiplicity.  For $\frak{c}\in \pi_0(\mathcal{P}(\nu^*Q,T_{x_1}^{*}N))$ and $l\in \Z$ we define \[ \mathcal{G}_{\frak{c},l}(Q,x_1)=\{\gamma\in\mathcal{G}(Q,x_1)|[\gamma]=\frak{c},\,Morse(\gamma)=l\}\]

For our main results we now make the following assumption on the behavior of geodesics between $Q$ and $x_1$:

\begin{assm}\label{assm}  There is a homotopy class $\frak{c}\in \pi_0(\mathcal{P}(\nu^*Q,T_{x_1}^{*}N))$ and an integer $k$ such that:
\begin{itemize} \item[(i)] $\mathcal{G}_{\frak{c},k}(Q,x_1)\neq\varnothing$.
\item[(ii)] $\mathcal{G}_{\frak{c},k}(Q,x_1)\cup \mathcal{G}_{\frak{c},k+2}(Q,x_1)$ is a finite set.
\item[(iii)] $\mathcal{G}_{\frak{c},k-1}(Q,x_1)\cup\mathcal{G}_{\frak{c},k+1}(Q,x_1)=\varnothing$
\item[(iv)] Either $n-d\neq 2$ or $k\neq 0$.
\end{itemize}
\end{assm}

\begin{remark}\label{kn1}  Since there are minimizing geodesics in every homotopy class of paths from $Q$ to $x_1$, and since such geodesics are always perpendicular to $Q$, it always holds that $\mathcal{G}_{\frak{c},0}(Q,x_1)\neq\varnothing$.  In particular we can never have $k=1$ in Assumption \ref{assm}, as this would violate item (iii).
\end{remark}

\begin{remark}  Assumption \ref{assm} is most often satisfied with $k=0$; in fact, I do not know of an example  in which the condition holds for some nonzero $k$ but does not hold for $k=0$.  When $k=0$,  (i) always holds (as mentioned in Remark \ref{kn1}), while (iii) requires only that $\mathcal{G}_{\frak{c},1}(Q,x_1)=\varnothing$.  
\end{remark}

\begin{remark}\label{pi2}  Since the Morse complex generated by geodesics from $Q$ to $x_1$ has homology equal to that of  space $\mathcal{P}_N(Q,x_1)$ of paths in $N$ from $Q$ to $x_1$ (cf. the proof of Proposition \ref{nonpos} below), in order for Assumption \ref{assm} to hold it is necessary for $\mathcal{P}(Q,x_1)$ to have a component with nontrivial homology in degree $k$ but trivial homology in degrees $k-1$ and $k+1$.  In the case that $Q$ is a single point, $\mathcal{P}_N(Q,x_1)$ is homeomorphic to the based loop space $\Omega N$. In particular, if $Q$ is a single point, then in order for Assumption \ref{assm} to hold with $k=0$ it is necessary that $\pi_2(N)=0$ (since there are isomorphisms $\pi_2(N)\cong\pi_1(\Omega N)\cong H_1(\Omega N;\Z)$).
\end{remark}

\begin{remark} \label{ricci}
If the compact Riemannian manifold $(N,g)$ has positive Ricci tensor everywhere, then Assumption \ref{assm} (ii) automatically holds.  Indeed in this case \cite[Theorem 19.6]{Mil} gives an upper bound on the length of any geodesic with index $k$ or $k+2$, and since $x_1$ is not a focal point of $Q$ there can be only finitely many elements of $\mathcal{G}(Q,x_1)$ whose lengths obey this upper bound.
\end{remark}

Assumption \ref{assm} is frequently satisfied in nonpositive curvature:

\begin{prop}\label{nonpos} Let $(N,g)$ be a compact connected Riemannian $n$-manifold and suppose that there is $c\geq 0$ such that all sectional curvatures of $(N,g)$ are bounded above by $-c$.  Let $d<n$ with $d\neq n-2$ and suppose that $Q\subset N$ is a compact $d$-dimensional submanifold such that for every $x\in Q$ and every $p\in T^{*}_{x}N$ with $p|_{TQ}=0$ and $|p|=1$ the shape operator $\Sigma_{(x,p)}\co T_xQ\to T_x Q$ has all of its eigenvalues bounded above by $\sqrt{c}$.  Then for any $x_1\in N\setminus Q$  and any $\frak{c}\in \pi_0(\mathcal{P}(\nu^*Q,T_{x_1}^{*}N))$, Assumption \ref{assm} holds with respect to the data $N,g,Q,x_1,\frak{c}$, and $k=0$.
\end{prop}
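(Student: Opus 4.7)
\medskip
\noindent\textbf{Proof plan.}
The goal is to verify conditions (i)--(iv) of Assumption \ref{assm} with $k=0$. Condition (iv) is immediate from $d\neq n-2$, and (i) holds for every $\frak{c}$ by the existence of a length-minimizing representative, which is automatically a geodesic starting perpendicular to $Q$ and has Morse index zero (cf.\ Remark \ref{kn1}). The substance is to prove that every $\gamma\in\mathcal{G}(Q,x_1)$ has Morse index zero---which immediately gives (iii) and also $\mathcal{G}_{\frak{c},2}(Q,x_1)=\varnothing$---and that $\mathcal{G}_{\frak{c},0}(Q,x_1)$ is finite, completing (ii).

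For the first part I would show that $Q$ has no focal points in $N$; by the Morse Index Theorem \cite[Theorem V.15.2]{Morse} this forces $Morse(\gamma)=0$ for every $\gamma\in\mathcal{G}(Q,x_1)$. Let $\gamma$ be a unit-speed geodesic with $\gamma'(0)\perp T_{\gamma(0)}Q$ and let $J$ be a $Q$-Jacobi field along $\gamma$---so $J(0)\in T_{\gamma(0)}Q$ and $J'(0)+\Sigma_{(\gamma(0),\gamma'(0))}J(0)\in T_{\gamma(0)}Q^\perp$. Set $\rho(t)=|J(t)|$. The sectional-curvature bound gives $\rho''\geq c\rho$ wherever $\rho>0$, while the focal boundary condition combined with the shape-operator bound yields
\[ \rho(0)\rho'(0)=\langle J(0),J'(0)\rangle=-g(\Sigma J(0),J(0))\geq -\sqrt{c}\rho(0)^2. \]
If $\rho(0)>0$, I would Sturm-compare $\rho$ with the solution $u(t)=\rho(0)e^{-\sqrt{c}t}$ of $u''=cu$: the Wronskian $W=\rho'u-\rho u'$ satisfies $W(0)\geq 0$ and $W'=\rho''u-\rho u''\geq 0$, so $(\rho/u)'\geq 0$ and hence $\rho\geq u>0$ for all $t\geq 0$. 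The case $\rho(0)=0$ reduces to the standard Rauch estimate. Thus $J$ has no zeros in $(0,\infty)$, so $Q$ has no focal points.

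For the finiteness of $\mathcal{G}_{\frak{c},0}(Q,x_1)$, I would pass to the universal cover $\pi\co\tilde N\to N$, fix a lift $\tilde x_1$ of $x_1$, and let $\tilde Q_\alpha$ be the connected component of $\pi^{-1}(Q)$ singled out by $\frak{c}$; then $\mathcal{G}_{\frak{c},0}(Q,x_1)$ is in bijection with the critical set of $F=\tfrac12 d(\cdot,\tilde x_1)^2$ on $\tilde Q_\alpha$. At any such critical point $\tilde q$ the minimizing $\tilde N$-geodesic from $\tilde q$ to $\tilde x_1$ meets $\tilde Q_\alpha$ orthogonally, with unit direction $\hat u$ and $\nabla F(\tilde q)=-r\hat u$ for $r=d(\tilde q,\tilde x_1)>0$. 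For $v\in T_{\tilde q}\tilde Q_\alpha$ (so $v\perp\hat u$) the Hadamard Hessian comparison in curvature $\leq -c$ gives $\mathrm{Hess}^{\tilde N}F(v,v)\geq r\sqrt{c}\coth(r\sqrt{c})|v|^2$ (with limiting value $|v|^2$ when $c=0$), while the shape-operator bound gives $\langle\nabla F,II(v,v)\rangle=-r\,g(\Sigma_{(\tilde q,\hat u^\flat)}v,v)\geq -r\sqrt{c}|v|^2$. Adding, the intrinsic Hessian satisfies
\[ \mathrm{Hess}(F|_{\tilde Q_\alpha})(v,v)\geq r\sqrt{c}\bigl(\coth(r\sqrt{c})-1\bigr)|v|^2>0. \]
Since $x_1\notin Q$ and $\tilde Q_\alpha$ is closed in $\tilde N$, $F|_{\tilde Q_\alpha}$ is proper; it is then a proper Morse function on the connected manifold $\tilde Q_\alpha$ whose every critical point has index zero, and any such function has exactly one critical point, yielding $|\mathcal{G}_{\frak{c},0}(Q,x_1)|=1$.

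The principal difficulty is the numerical balance in the Hessian inequality: the Hadamard term $r\sqrt{c}\coth(r\sqrt{c})$ must strictly dominate the worst-case shape-operator contribution $r\sqrt{c}$, which it does precisely because $\coth-1>0$. The analogous balance in the no-focal-points step is captured by the comparison function $e^{-\sqrt{c}t}$, and is the reason that the shape-operator hypothesis is formulated with the precise constant $\sqrt{c}$.
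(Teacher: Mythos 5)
Your argument is correct, and it reaches the same conclusion the paper does (namely, that $\mathcal{G}_{\frak{c},0}(Q,x_1)$ is in fact a singleton), but by a genuinely different route at both of its key steps. For the absence of focal points, the paper simply invokes Warner's extension of the Rauch comparison theorem to submanifolds (\cite[Theorem 4.1 and Corollary 4.2]{W}), which is exactly the statement you reprove from scratch via the Sturm comparison of $\rho(t)=|J(t)|$ against $\rho(0)e^{-\sqrt{c}t}$; your Wronskian argument is a clean self-contained substitute, and it does make visible why $\sqrt{c}$ is the sharp constant in the shape-operator hypothesis. For the uniqueness of the index-zero geodesic, the paper stays downstairs in $N$: it invokes the Morse-theoretic cell structure on the path component $\pi_*\frak{c}$ of $\mathcal{P}_N(Q,x_1)$ (Milnor's theorem in the form extended to endmanifolds by Kalish), observes that once all indices are zero this cell complex has only $0$-cells, and concludes from path-connectedness that there is a single $0$-cell. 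You instead pass to the universal cover and run the Hadamard Hessian comparison for $\frac12 d(\cdot,\tilde x_1)^2$ restricted to the appropriate lift $\tilde Q_\alpha$ of $Q$, showing it is a proper Morse function whose critical points all have index zero and then applying a mountain-pass/Morse-handle argument to get a unique critical point. Your route trades two citations for a somewhat longer computation, but it keeps the whole proof inside elementary comparison geometry and makes the numerical balance ($\coth(r\sqrt{c})>1$ versus the shape-operator bound) explicit; the paper's route is shorter and directly reuses machinery it needs elsewhere. Both are valid proofs of the proposition.
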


\begin{proof} By \cite[Theorem 4.1 and Corollary 4.2]{W}\footnote{Note that \cite{W} uses an opposite sign convention to ours for the second fundamental form.  Also, in the remark above \cite[Corollary  4.2]{W}, various references to $c^{1/2}$ should be replaced by $|c|^{1/2}$ throughout, as can be seen by considering the appropriate constant-curvature examples when $c<0$.}, the assumption on the curvature and on the eigenvalues of $\Sigma_{(x,p)}$ imply that there are no focal points along any geodesic in $\mathcal{G}(Q,x_1)$ (in particular, $x_1$ is not a focal point of $Q$).  Consequently for any $\frak{c}$ we have $\mathcal{G}_{\frak{c},l}(Q,x_1)=\varnothing$ for $l\geq 1$.  Since we have assumed that $d\neq n-2$ and since $\mathcal{G}_{\frak{c},0}(Q,x_1)$ is nonempty, this proves all parts of Assumption \ref{assm} except for the statement that $\mathcal{G}_{\frak{c},0}(Q,x_1)$ is finite.  In fact we will show that $\mathcal{G}_{\frak{c},0}(Q,x_1)$ has just one element.\footnote{In the case that $Q$ is totally geodesic, \emph{i.e.} that each $\Sigma_{(x,p)}=0$, one can show very directly that $\mathcal{G}_{\frak{c}}(Q,x_1):=\cup_l\mathcal{G}_{\frak{c},l}(Q,x_1)$ has just one element (without appealing to \cite{W} or to the Morse index theorem) simply by observing that if $\gamma_1,\gamma_2$ were distinct elements of $\mathcal{G}_{\frak{c}}(Q,x_1)$ then combining $\gamma_1$ and $\gamma_2$ with a suitable geodesic in $Q$ from $\gamma_1(0)$ to $\gamma_2(0)$ would yield a geodesic triangle in $N$ having two right angles, which is impossible since $N$ has nonpositive curvature.}

The projection $\pi\co T^*N\to N$ induces a bijection $\pi_*$ between $\pi_0(\mathcal{P}(\nu^*Q,T_{x_1}^{*}N))$ and the space $\mathcal{P}_N(Q,x_1)$ of paths from $Q$ to $x_1$, and just as in \cite[Theorem 17.3]{Mil} the path component $\pi_*\frak{c}$ of $\mathcal{P}_N(Q,x_1)$ has the homotopy type of a cell complex with one $l$-cell for every element of $\mathcal{G}_{\frak{c},l}(Q,x_1)$ (see \cite[Section 3]{K} for details on the extension of \cite[Theorem 17.3]{Mil} to the case where the left endpoints of the geodesics are replaced by a submanifold).  
But then since we have established that $\mathcal{G}_{\frak{c},l}(Q,x_1)=\varnothing$ for $l\geq 1$, there are no cells of dimension greater than one in this cell complex, and so since $\pi_*\frak{c}$ is path-connected there can only be one $0$-cell.  Thus indeed $\mathcal{G}_{\frak{c},0}(Q,x_1)$ has just one element.
\end{proof}

Meanwhile, here are some positive curvature cases in which Assumption \ref{assm} holds:

\begin{prop} \label{groupsphere} Let $(N,g)$ be either a compact semisimple Lie group with a bi-invariant metric, or a sphere $S^n$ where $n\geq 3$ with its standard metric.  Then Assumption \ref{assm} holds with $k=0$ if we choose $Q$ to consist of a single point $x_0$ which is not conjugate to $x_1$.
\end{prop}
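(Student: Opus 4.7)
The plan is to verify each of the four clauses of Assumption \ref{assm} with $k = 0$, $Q = \{x_0\}$ (so $d = 0$), and $\frak{c}$ any homotopy class containing a minimizing geodesic from $x_0$ to $x_1$ (there is only one such class in the sphere case, since $n \geq 3$ forces $\pi_1(S^n) = 0$). Clause (iv) reduces to $n \neq 2$, which is immediate in both cases (compact semisimple Lie algebras have dimension at least three, the minimum realised by $\mathfrak{su}(2)$); clause (i) is Remark \ref{kn1}; and clause (ii) follows from Remark \ref{ricci}, since both geometries have positive Ricci curvature---the round sphere is Einstein with positive scalar, while for a bi-invariant metric on a compact semisimple Lie group the standard computation yields $\mathrm{Ric} = -\frac{1}{4} B$ with $B$ the Killing form, which is negative-definite by Cartan's criterion.

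What remains is clause (iii), which (using that Morse indices are non-negative) amounts to showing $\mathcal{G}_{\frak{c},1}(x_0,x_1) = \varnothing$; I would dispatch the two cases separately. For $S^n$ with $n \geq 3$, the hypothesis that $x_1$ is not conjugate to $x_0$ forces $x_1 \neq \pm x_0$, so the geodesics from $x_0$ to $x_1$ are precisely the arcs of the unique great circle through them, with lengths $\ell,\, 2\pi-\ell,\, 2\pi+\ell,\, 4\pi-\ell, \ldots$ for some $\ell \in (0,\pi)$. Along any such geodesic the conjugate points of $x_0$ arise at arc-length distances equal to positive integer multiples of $\pi$, each with multiplicity $n-1$, so by the Morse Index Theorem the attainable Morse indices are $0,\,n-1,\,2(n-1),\,3(n-1), \ldots$. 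With $n \geq 3$ every nonzero value is at least $2$, so the value $1$ is avoided.

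For a compact semisimple Lie group $G$ with bi-invariant metric, left-translation (an isometry) reduces us to $x_0 = e$, so every geodesic from $x_0$ has the form $\gamma(t) = \exp(tX)$ for some $X \in \mathfrak{g}$. Using the standard formula $(d\exp)_{tX} = (L_{\exp(tX)})_{*} \circ \frac{I - e^{-\mathrm{ad}(tX)}}{\mathrm{ad}(tX)}$, the conjugate points of $e$ along $\gamma$ occur exactly where $\mathrm{ad}(X)$ has an eigenvalue $i\lambda$ with $\lambda \neq 0$ and $t\lambda \in 2\pi \mathbb{Z}_{>0}$, with multiplicity equal to the real dimension of the $(-\lambda^2)$-eigenspace of $\mathrm{ad}(X)^2$. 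The crucial observation is that $\mathrm{Ad}$-invariance of the metric makes $\mathrm{ad}(X)$ real skew-adjoint, so its nonzero eigenvalues on $\mathfrak{g}_{\mathbb{C}}$ are purely imaginary and occur in complex-conjugate pairs $\pm i\lambda$; consequently every negative eigenspace of $\mathrm{ad}(X)^2$ on $\mathfrak{g}$ is even-dimensional, every conjugate point of $e$ along $\gamma$ has even multiplicity, and the Morse index of $\gamma$ is therefore even. Hence the value $1$ is never attained, completing clause (iii). The main potential obstacle is in this Lie-group step: one needs to correctly tie the multiplicity of a conjugate point to the dimension of the relevant $\mathrm{ad}(X)^2$-eigenspace, but this is immediate from the displayed formula for $(d\exp)_{tX}$; everything else is bookkeeping.
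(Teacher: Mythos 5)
Your proof is correct and follows the same clause-by-clause verification as the paper: both cite Remark \ref{kn1} for (i), Remark \ref{ricci} plus positive Ricci curvature for (ii), the dimension exclusion for (iv), and an even-Morse-index argument for (iii). The only difference is that you supply the details behind clause (iii) yourself (the constant-curvature Jacobi field analysis for $S^n$, and the $(d\exp)_{tX}$ formula together with skew-adjointness of $\mathrm{ad}(X)$ for the Lie group case), where the paper simply cites \cite[Theorem 21.7]{Mil} and the well-known sphere index computation.
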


\begin{proof} $(N,g)$ has positive Ricci curvature (as is obvious in the case of $S^n$, and follows from \cite[p. 115]{Mil} in the Lie group case since a semisimple Lie algebra has trivial center), so Assumption \ref{assm}(ii) holds by Remark \ref{ricci}.  Of course Assumption \ref{assm}(i) holds since we are taking $k=0$, and Assumption \ref{assm}(iv) holds since we have  excluded $S^2$ from the hypotheses and since there are no  two-dimensional compact semisimple Lie groups.  As for Assumption \ref{assm}(iii),  \cite[Theorem 21.7]{Mil} is proven by showing that the geodesics connecting two nonconjugate points on a compact Lie group always have even Morse index, so evidently $\mathcal{G}_{\frak{c},1}(\{x_0\},x_1)=\varnothing$ in the Lie group case.  Similarly, in the case of $S^n$ all geodesics have index divisible by $n-1$, so that $\mathcal{G}_{\frak{c},1}(\{x_0\},x_1)=\varnothing$ (again using that $n\geq 3$).
\end{proof}

\begin{remark}\label{curvop}  There is a somewhat broader class of positively-curved symmetric spaces, which includes those from Proposition \ref{groupsphere}, to which Assumption \ref{assm} applies.  Consider a compact-type symmetric space $(N,g)$, given as a Riemannian quotient $N=G/H$ where the compact Lie group $G$ is the identity component of the isometry group of $N$ and the isotropy group $H$ is a union of components of the fixed locus of an involution $\sigma$ of $G$.  
Decompose the Lie algebra $\frak{g}$ of $G$ as $\frak{g}=\frak{h}\oplus\frak{p}$ where $\frak{h}$ is the Lie algebra of $H$ and $\frak{p}$ is the $(-1)$-eigenspace of the linearization of $\sigma$.  Thus the tangent space $T_{x_0}N$ at a suitable basepoint $x_0\in N$ is naturally identified with $\frak{p}$, and we have \[ [\frak{h},\frak{h}]\subset \frak{h},\quad [\frak{p},\frak{h}]\subset \frak{p},\quad [\frak{p},\frak{p}]\subset\frak{h}. \]  Under the identification $T_{x_0}N\cong \frak{p}$ the curvature tensor of $(N,g)$ at $x_0$ is given by \[ R(X,Y)Z=-[[X,Y],Z]\quad (X,Y,Z\in\frak{p}),\]  and moreover the curvature tensor is parallel (see, \emph{e.g.}, \cite[Chapter IV]{He}).  So if $\gamma$ is a geodesic with $\gamma'(0)=X\in T_{x_0}N$, then $t$ is a conjugate time for $\gamma$ if and only if $t=\frac{2\pi j}{\sqrt{\lambda}}$ where $j\in\Z_+$ and $\lambda$ is a positive eigenvalue of the operator $Y\mapsto [[X,Y],X]$ on $\frak{p}$, with the multiplicity of the conjugate time equal to the multiplicity of the corresponding eigenvalue $\lambda$. 

In particular, it follows that $N=G/H$ obeys Assumption \ref{assm} with $k=0$,  with $x_0$ equal to any point not conjugate to $x_1$, and with any homotopy class $\frak{c}$, provided that the following condition holds: \begin{equation}\label{symcond}\mbox{For all $X\in\frak{p}$, the largest eigenvalue of $(Y\in\frak{p})\mapsto [[X,Y],X]$ has multiplicity greater than one} \end{equation}  (Indeed, the argument just given shows that, $\mathcal{G}_{\frak{c},1}(\{x_0\},x_1)=\varnothing$, and since $N$ is assumed to be of compact type it has positive Ricci tensor, so Assumption \ref{assm} (ii) holds by Remark \ref{ricci}.  Also, since $[[X,X],X]=0$ the condition (\ref{symcond}) forces $\dim N\geq 3$, so 
Assumption \ref{assm} (iv) holds.)

To verify (\ref{symcond}) in a given example, it is convenient to note that it only needs to be checked for all $X$ belonging to a fixed maximal abelian subspace $\frak{a}\subset \frak{p}$, since \cite[Lemma V.6.3]{He} shows that there is an isometry of $N$ whose derivative maps any given element of $\frak{p}$ into $\frak{a}$.    With this said, we leave it to the reader to check that (\ref{symcond}) and hence Assumption \ref{assm} holds for the following classes of symmetric spaces (for all but the first and last of these, the subspace $\frak{p}\subset \frak{g}$ and a convenient choice of the maximal abelian subspace $\frak{a}\subset\frak{p}$ are indicated in \cite[Section X.2.3]{He}; for the case of rank-one symmetric spaces, including $\mathbb{O}P^2$, see also \cite[p. 171]{He65}---the relevant eigenvalue multiplicity is denoted there by $q$):

\begin{itemize} \item $G_0\cong \frac{G_0\times G_0}{\Delta}$ where $G_0$ is a compact semisimple Lie group and $\Delta\leq G_0\times G_0$ is the diagonal.
\item The spheres $S^n=\frac{SO(n+1)}{SO(n)}$ and real projective spaces $\R P^n=\frac{SO(n+1)}{S(O(1)\times O(n))}$, where $n\geq 3$.
\item The quaternionic Grassmannians $Gr_m(\mathbb{H}^n)=\frac{Sp(n)}{Sp(m)\times Sp(n-m)}$ for $1\leq m\leq n-1$.
\item The space $\frac{U(2n)}{Sp(n)}$ of unitary quaternionic structures on $\mathbb{C}^{2n}$.
\item The Cayley projective plane $\mathbb{O}P^2=\frac{F_4}{Spin(9)}$.
\end{itemize}  

On the other hand, the members of those infinite families in Cartan's classification of irreducible compact-type symmetric spaces that are not listed above all have nonzero $\pi_2$ and so, by Remark \ref{pi2}, cannot satisfy Assumption \ref{assm} with $k=0$ and $Q$ equal to a singleton.

Note also that if $(N,g)$ is instead just \emph{locally} isometric to a compact-type symmetric space $G/H$ obeying (\ref{symcond}) then the same argument applies to show that Assumption \ref{assm} holds for $(N,g)$.  (For instance, $N$ could be a quotient of $G/H$ by a discrete group of isometries.)
\end{remark}

\begin{remark}\label{product} If the  manifold $(N,g)$ (together with auxiliary data $Q,x_1,\frak{c}$)   satisfies  Assumption \ref{assm}, and if $(N',g')$ is an arbitrary compact Riemannian manifold, then the product $(N\times N',g\times g')$ also satisfies Assumption \ref{assm}  (using auxiliary data $Q\times N', (x_1,x_2),\frak{c}\times \frak{c}_0$ where $x_2\in N'$ is chosen arbitrarily and $\frak{c}_0$ is the homotopy class of the constant path at $x_2$).  Indeed, it is easy to see that the geodesics in the product $N\times N'$ from $Q\times N'$ to $(x_1,x_2)$ which represent the class $\frak{c}\times \frak{c}_0$ and are initially orthogonal to $Q\times N'$ are precisely maps of the form $\tilde{\eta}\co t\mapsto (\eta(t),x_2)$ where $\eta$ is a geodesic in $N$ from $Q$ to $x_1$ which is initially orthogonal to $Q$.  Moreover, the homotopy classes $\frak{c}\times \frak{c}_0$ in $\pi_0(\mathcal{P}(Q\times N',(x_1,x_2)))$ remain distinct as $\frak{c}$ varies through $\pi_0(\mathcal{P}(Q,x_1))$, and the conjugate times and multiplicities for a geodesic $\eta$ from $Q$ to $x_1$ are the same as those for its corresponding geodesic $\tilde{\eta}$, so that the Morse indices coincide under the correspondence $\eta\leftrightarrow \tilde{\eta}$. 

   In particular we obtain in this way Riemannian manifolds obeying Assumption \ref{assm} that have indefinite Ricci tensor, to go along with our previous positively- and negatively-curved examples.
\end{remark}

\section{Estimating the boundary depth} \label{compute}

Let $(N,g)$ be a compact connected Riemannian $n$-manifold satisfying Assumption \ref{assm} with respect to a compact $d$-submanifold $Q$ and a point $x_1$, a nonnegative integer $k$, and a homotopy class $\frak{c}$.   We consider the Floer complexes $CF_{\frak{c}}(\nu^*Q,T_{x_1}^{*}N;H_{f_{\frak{a}}})$ for certain functions $f_{\vec{a}}\co [0,\infty)\to \R$ associated to vectors $\vec{a}\in\R^{\infty}$ that will be described presently.

First let us fix an arbitrary $R>0$ and a smooth function $h\co \R\to[0,1]$ with the following properties:\begin{itemize}
\item $supp(h)=[\delta,R-\delta]$ for some real number $\delta$ with $0<\delta<\frac{R}{4}$.
\item The only local extremum of $h|_{(\delta,R-\delta)}$ is a maximum, at $h(R/2)=1$.
\item $h''(s)<0$ if and only if $s\in (R/4,3 R/4)$.
\item $h''(s)>0$ if and only if $s\in (\delta,R/4)\cup (3R/4,R-\delta)$.\end{itemize}

Now for $\vec{a}\in \R^{\infty}$ define \[ f_{\vec{a}}(s)=\sum_{i=0}^{\infty}a_if(2^{i+1}s-R) \]  So the restriction of $f_{\vec{a}}$ to each interval $[2^{-(i+1)}R,2^{-i}R]$ is a copy of $h$ which has been rescaled horizontally to have support within the interval and which has been rescaled vertically by the factor $a_i$.

\begin{prop}\label{k-C}  Under Assumption \ref{assm},  there is a constant $C_0>0$ depending on the Riemannian metric $g$ and the function $h$ but not on $\vec{a}$ with the properties that, whenever $\nu^*Q\pitchfork (\phi_{H_{f_{\vec{a}}}}^{1})^{-1}(T^{*}_{x_1}N)$: \begin{itemize}\item[(i)] For every $(r,y)\in \mathcal{C}_{\frak{c}}(f_{\vec{a}},Q,x_1)$ such that $\mu(\gamma_{(r,y)})\in \{d-k-1,d-k+1\}$ we have $\mathcal{A}_{H_{f_{\vec{a}}}}(\gamma_{(r,y)})\geq -C_0$. \item[(ii)]  For every $(r,y)\in \mathcal{C}_{\frak{c}}(f_{\vec{a}},Q,x_1)$ such that $\mu(\gamma_{(r,y)})\in \{n+k-1,n+k+1\}$ we have $\mathcal{A}_{H_{f_{\vec{a}}}}(\gamma_{(r,y)})\leq C_0$.
\end{itemize}
\end{prop}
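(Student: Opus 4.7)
The strategy is to combine Proposition~\ref{gradecompQ} with Assumption~\ref{assm} to restrict the possible critical points $\gamma_{(r,y)}$ of the specified gradings to those whose projections to $N$ lie in the finite set $\mathcal{G}_{\frak{c},k}(Q,x_1)\cup \mathcal{G}_{\frak{c},k+2}(Q,x_1)$, and then to exploit the self-similar structure of $f_{\vec{a}}$ to bound the action uniformly in $\vec{a}$.

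First I would enumerate, for each grading $\mu\in\{d-k-1,d-k+1,n+k-1,n+k+1\}$, the four strata of Proposition~\ref{gradecompQ} (indexed by the signs of $f_{\vec{a}}'(r)$ and $f_{\vec{a}}''(r)$), each giving $\mu=(\text{const})\pm\mathrm{Morse}(\pi\circ\gamma_{(r,y)})$.  Using Assumption~\ref{assm}(iii) to forbid Morse index $k\pm 1$, the inequalities $0\le\mathrm{Morse}$ and $d<n$ to rule out certain strata, Assumption~\ref{assm}(iv) to eliminate the edge case $n-d=2,\,k=0$, and Assumption~\ref{assm}(ii) to ensure finiteness for the remaining Morse indices $k$ and $k+2$, one concludes that every relevant $\gamma_{(r,y)}$ projects to a geodesic in $\mathcal{G}_{\frak{c},k}(Q,x_1)\cup \mathcal{G}_{\frak{c},k+2}(Q,x_1)$; in particular $|f_{\vec{a}}'(r)|\in\{\ell_1,\ldots,\ell_m\}$, the finite set of lengths of those geodesics.

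Next, since the rescaled copies of $h$ in the definition of $f_{\vec{a}}$ have pairwise disjoint supports, any such $r$ lies in a unique bump $i\ge 0$; writing $u=2^{i+1}r-R\in(\delta,R-\delta)$ gives
\[ f_{\vec{a}}(r)=a_ih(u),\quad f_{\vec{a}}'(r)=2^{i+1}a_ih'(u),\quad f_{\vec{a}}''(r)=4^{i+1}a_ih''(u). \]
The grading-imposed sign conditions on $f_{\vec{a}}'$ and $f_{\vec{a}}''$ together with $|f_{\vec{a}}'(r)|=\ell_j$ confine $u$ to one of finitely many explicit sub-intervals of $(\delta,R-\delta)$ and fix $|a_i|=\ell_j/(2^{i+1}|h'(u)|)$.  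Substituting into $\mathcal{A}_{H_{f_{\vec{a}}}}(\gamma_{(r,y)})=a_i\bigl[h(u)-(u+R)h'(u)\bigr]$ (which follows from \eqref{actionrz} and $r=(u+R)/2^{i+1}$) and eliminating $a_i$ yields
\[
\mathcal{A}_{H_{f_{\vec{a}}}}(\gamma_{(r,y)})=\operatorname{sgn}(f_{\vec{a}}'(r))\,\frac{\ell_j}{2^{i+1}}\left(\frac{h(u)}{h'(u)}-(u+R)\right).
\]
A direct case-by-case inspection on each admissible sub-interval shows that the bracket is bounded in the appropriate direction, uniformly in $i\ge 0$; $C_0$ may then be chosen as $\max_j\ell_j$ times a constant depending only on $R,\delta,h$.

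The delicate point is the behavior of $h(u)/h'(u)$ at the boundary of its natural domain.  Near $u=R/2$ the ratio blows up to $\pm\infty$, but the sign conditions imposed by the grading force the blowup to be in the favorable direction (toward $+\infty$ for (i) and $-\infty$ for (ii)), so it only helps.  Near $u=\delta$ and $u=R-\delta$ both $h$ and $h'$ vanish to infinite order, and one needs to pick $h$ with sufficiently flat tails---for example of the standard form $e^{-1/(u-\delta)}e^{-1/(R-\delta-u)}$ on its support, suitably normalized---so that $h/h'\to 0$ at each endpoint.  With such a choice, $h/h'$ is bounded on every admissible sub-interval, and the asserted estimate follows.
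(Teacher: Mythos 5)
Your reduction of the problem via Proposition~\ref{gradecompQ} and Assumption~\ref{assm} to geodesics of Morse index $k$ or $k+2$ matches the paper's argument, and the algebraic identity $\mathcal{A}_{H_{f_{\vec{a}}}}(\gamma_{(r,y)})=a_i\bigl[h(u)-(u+R)h'(u)\bigr]$ is a legitimate repackaging of $\mathcal{A}=f_{\vec{a}}(r)-rf_{\vec{a}}'(r)$. The problem is in the final paragraph. You claim that to control $h/h'$ near $u=\delta$ and $u=R-\delta$ one must choose $h$ ``with sufficiently flat tails,'' e.g.\ proportional to $e^{-1/(u-\delta)}e^{-1/(R-\delta-u)}$. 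That is both unnecessary and not permitted: the proposition asserts a bound for every $h$ satisfying the conditions laid out at the start of Section~\ref{compute}, so you may not further restrict $h$. In fact the boundedness of $h/h'$ on the two convex subintervals is automatic from the stated sign conditions on $h''$. On $(\delta,R/4)$ one has $h'$ positive and increasing (since $h''>0$ there), so $h(u)=\int_{\delta}^{u}h'(s)\,ds\le (u-\delta)h'(u)$, giving $0\le h(u)/h'(u)\le u-\delta<R$; similarly on $(3R/4,R-\delta)$ one has $h'$ negative and increasing, so $h(u)=-\int_{u}^{R-\delta}h'(s)\,ds\le -(R-\delta-u)h'(u)$, giving $|h(u)/h'(u)|\le R-\delta-u<R$. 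This is precisely the integral comparison the paper runs (in the equivalent form $f_{\vec{a}}(r)\ge -RL$), and it is the missing step in your proof; without it your argument does not establish the proposition for the allowed class of $h$. Once you insert this observation, the rest of your sign-by-sign analysis (including the correct remark that the blowup of $h/h'$ near $u=R/2$ is in the favorable direction) yields the desired uniform constant $C_0$.
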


\begin{proof} We first prove (i). Consulting Proposition \ref{gradecompQ}, we see that any such $(r,y)$ would correspond to a geodesic $\pi\circ\gamma_{(r,y)}$ whose Morse index is one of the following: \begin{itemize} \item[(A)] $k\pm 1$, if $f_{\vec{a}}'(r)>0,f_{\vec{a}}''(r)>0$
\item[(B)] $k$ or $k+2$, if $f_{\vec{a}}'(r)>0,f_{\vec{a}}''(r)<0$
\item[(C)] $-k-(n-d)$ or $2-k-(n-d)$, if $f_{\vec{a}}'(r)<0,f_{\vec{a}}''(r)>0$
\item[(D)] $-(n-d)-k\pm 1$, if $f_{\vec{a}}'(r)<0,f_{\vec{a}}''(r)<0$ \end{itemize}
Now (A) above is forbidden by  Assumption \ref{assm}(iii).   (C) is also forbidden: for the first subcase of (C) just note that there are no geodesics of negative Morse index and we have $k\geq 0$ and $n-d\geq 1$ (as $Q$ is a compact $d$-submanifold of the connected $n$-manifold $N$ which does not contain $x_1$ and so has positive codimension), so the Morse index cannot be $-k-(n-d)$.  If the Morse index were $2-k-(n-d)$,  then in view of Assumption \ref{assm}(iv) and the facts that $n-d\geq 1$ and (by Remark \ref{kn1}) $k\neq 1$, it would have to hold  that $k=0$ and $n-d=1$.  But this is also impossible, since then $Morse(\pi\circ\gamma_{(r,y)})$ would be $1$ whereas by Assumption \ref{assm}(iii) $\mathcal{G}_{\frak{c},1}(Q,x_1)=\varnothing$.  So one of (B) or (D) applies, and in particular our element 
$(r,y)\in \mathcal{C}_{\frak{c}}(f_{\vec{a}},Q,x_1)$ has $f_{\vec{a}}''(r)<0$.  Also, the only way that (D) could hold is if $k=0$, $n-d=1$, and $\pi\circ\gamma_{(r,y)}$ has index zero.  So in any case $\pi\circ\gamma_{(r,y)}$ has index either $k$ or $k+2$.

In view of Assumption \ref{assm}(ii) there is an upper bound, say $L$, on the lengths of index-$k$ or $k+2$ geodesics in class $\frak{c}$ from $Q$ to $x_1$.  Our element $(r,y)\in  \mathcal{C}_{\frak{c}}(f_{\vec{a}},Q,x_1)$ has the property that $\pm f_{\vec{a}}'(r)$ is equal to the length of such a geodesic.  Now $\mathcal{A}_{H_{f_{\vec{a}}}}(\gamma_{(r,y)})=f_{\vec{a}}(r)-rf_{\vec{a}}'(r)$, and we have \[ |rf_{\vec{a}}'(r)|\leq RL \]  So if $f_{\vec{a}}(r)\geq 0$ obviously $\mathcal{A}_{H_{f_{\vec{a}}}}(\gamma_{(r,y)})\geq -RL$.  So assume $f_{\vec{a}}(r)<0$.  Choosing $j$ such that $r\in [2^{-(j+1)}R,2^{-j}r]$, we have $f_{\vec{a}}(r)=a_jh(2^{j+1}r-R)$ (so $a_j<0$). Meanwhile $f_{\vec{a}}'(r)=2^{j+1}a_jh'(2^{j+1}r-R)$ has absolute value equal to the length of an index-$k$ or $k+2$ geodesic from $Q$ to $x_1$, and hence is bounded above by $L$.  

By construction, $h''$ is positive precisely on the intervals $(\delta,R/4)$ and $(3R/4,R-\delta)$.  So since $a_j<0$ (so $f_{\vec{a}}''(r)$, which was earlier shown to be negative, has opposite sign to $h''(2^{j+1}r-R)$), we have $2^{j+1}r-R\in  (\delta,R/4)\cup (3R/4,R-\delta)$.   If $2^{j+1}r-R\in (\delta,R/4)$, then since  $a_j<0$, $h(\delta)=0$ and $h'$ is nonnegative and increasing on $[\delta,2^{j+1}r-R]$ we see that 
\[ f_{\vec{a}}(r)=a_jh(2^{j+1}r-R)=a_j\int_{\delta}^{2^{j+1}r-R}h'(s)ds \geq Ra_jh'(2^{j+1}r-R)\geq -RL \]  Similarly if $2^{j+1}r-R\in (3R/4,R-\delta)$ then since $h(R-\delta)=0$ and $h'$ is nonpositive and increasing on $[2^{j+1}r-R,R-\delta]$, \[ f_{\vec{a}}(r)=a_jh(2^{j+1}r-R)=-a_j\int_{2^{j+1}r-R}^{R-\delta}h'(s)ds \geq -Ra_jh'(2^{j+1}r-R)\geq -RL \]

Summing up, the relevant $\gamma_{(r,y)}$ have $\mathcal{A}_{H_{f_{\vec{a}}}}(\gamma_{(r,y)})=f_{\vec{a}}(r)-rf_{\vec{a}}'(r)$ where both $f_{\vec{a}}(r)\geq -RL$ and $rf_{\vec{a}}'(r)\geq -RL$, so part (i) of the Proposition holds with $C_0=2RL$.

The proof of part (ii) is a mirror image to that of part (i): one uses Proposition \ref{gradecompQ} to see that any such $(r,y)$ must have $f_{\vec{a}}''(r)>0$ and must correspond to a geodesic of Morse index $k$ or $k+2$.  This fact leads to an estimate $f_{\vec{a}}(r)\leq RL$, and hence to \[ \mathcal{A}_{H_{f_{\vec{a}}}}(\gamma_{(r,y)})=f_{\vec{a}}(r)-rf_{\vec{a}}'(r)\leq 2RL \]  Details are left to the reader.
\end{proof}




\begin{prop}\label{deep}  Under Assumption \ref{assm} there is a constant $A>0$ depending on $(N,g)$ and $h$  such that for all $\vec{a}\in \R^{\infty}$ obeying $\nu^*Q\pitchfork (\phi_{H_{f_{\vec{a}}}}^{1})^{-1}(T^{*}_{x_1}N)$:
\begin{itemize} \item[(i)] If $\min_i a_i< -A$ then there exists $(r,y)\in \mathcal{C}_{\frak{c}}(f_{\vec{a}},Q,x_1)$ such that $\mu(\gamma_{(r,y)})=d-k$ and $\mathcal{A}_{H_{f_{\vec{a}}}}(\gamma_{(r,y)})\leq \min_i a_i$.
\item[(ii)] If $\max_i a_i>A$ then there exists $(r,y)\in \mathcal{C}_{\frak{c}}(f_{\vec{a}},Q,x_1)$ such that $\mu(\gamma_{(r,y)})=n+k$ and $\mathcal{A}_{H_{f_{\vec{a}}}}(\gamma_{(r,y)})\geq \max_i a_i$.\end{itemize}
\end{prop}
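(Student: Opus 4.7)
The plan is to use Assumption~\ref{assm}(i) to fix an index-$k$ geodesic $\eta$ and then, for $\vec{a}$ with a suitably negative or positive coordinate, to produce by the intermediate value theorem a critical point of $\mathcal{A}_{H_{f_{\vec{a}}}}$ whose projection to $N$ reparametrizes $\eta$. I will spell out part (i); part (ii) is parallel after applying the antisymplectic involution $(x,p)\mapsto(x,-p)$ on $T^*N$.

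Fix once and for all $\eta\in \mathcal{G}_{\frak{c},k}(Q,x_1)$ of length $\ell_0$, and let $y_\eta=(\eta(0),\eta'(0)^\flat/|\eta'(0)|)\in S\nu^*Q$, so that $(\ell_0,y_\eta)\in \mathcal{B}_{\frak{c}}^+(Q,x_1)$. Assume $\min_i a_i<-A$ (with $A>0$ to be determined) and let $j$ achieve this minimum. On $[2^{-(j+1)}R,2^{-j}R]$ only the $i=j$ summand of $f_{\vec{a}}$ is nonzero, so writing $s'=2^{j+1}s-R$ we have $f_{\vec{a}}(s)=a_jh(s')$. Since $a_j<0$, both $f_{\vec{a}}'$ and $f_{\vec{a}}''$ are positive precisely on the sub-interval $s'\in(R/2,3R/4)$, where $f_{\vec{a}}'$ is strictly monotone and ranges from $0$ to $2^{j+1}|a_j|\,|h'(3R/4)|$. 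As soon as this maximum exceeds $\ell_0$, there is a unique such $r$ with $f_{\vec{a}}'(r)=\ell_0$, and then $(r,y_\eta)\in\mathcal{C}_{\frak{c}}(f_{\vec{a}},Q,x_1)$; since $\pi\circ\gamma_{(r,y_\eta)}$ reparametrizes $\eta$, Proposition~\ref{gradecompQ} gives $\mu(\gamma_{(r,y_\eta)})=d-k$.

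The action is $\mathcal{A}(\gamma_{(r,y_\eta)})=a_jh(s'_*)-r\ell_0$ with $s'_*=2^{j+1}r-R$. Substituting $r=(s'_*+R)/2^{j+1}$ and $|a_j|=\ell_0/(2^{j+1}|h'(s'_*)|)$, the desired bound $\mathcal{A}\leq a_j$ is equivalent, after clearing positive common factors, to the $\vec{a}$- and $j$-independent inequality
\[
\frac{1-h(s'_*)}{|h'(s'_*)|}\leq s'_*+R,
\]
for which it suffices to have $R|h'(s'_*)|\geq 1-h(s'_*)$. Since $h'(R/2)=0$ with $h''(R/2)<0$, Taylor expansion gives $|h'(s')|\sim |h''(R/2)|\,|s'-R/2|$ while $1-h(s')\sim \tfrac{1}{2}|h''(R/2)|(s'-R/2)^2$, so the desired inequality holds on some one-sided neighborhood $(R/2,R/2+\epsilon_0]$ depending only on $h$. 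By monotonicity of $|h'|$ on $(R/2,3R/4)$, choosing $A$ large enough forces $s'_*$ into this neighborhood, completing (i).

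For part (ii), using the involution we have $(-\ell_0,h(y_\eta))\in \mathcal{B}_{\frak{c}}(Q,x_1)$. When $a_j=\max_i a_i>A$, the analogous monotonicity argument on $(R/2,3R/4)$ produces $r$ with $f_{\vec{a}}'(r)=-\ell_0$ and $f_{\vec{a}}''(r)<0$; Proposition~\ref{gradecompQ} then yields grading $n+k$, and the inequality $\mathcal{A}(\gamma_{(r,h(y_\eta))})=a_jh(s'_*)+r\ell_0\geq a_j$ reduces to the identical inequality on $h$. The main technical point in both cases is this estimate $R|h'(s'_*)|\geq 1-h(s'_*)$ near $s'_*=R/2$, which exploits the linear vanishing of $h'$ against the quadratic vanishing of $1-h$ at the maximum.
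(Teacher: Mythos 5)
Your proof is correct, and the overall setup (fix a geodesic in $\mathcal{G}_{\frak{c},k}(Q,x_1)$, locate the relevant $r$ by monotonicity of $f'_{\vec{a}}$ on the $a_j$-bump, read off the grading from Proposition~\ref{gradecompQ}) matches the paper. The one place where your route differs is the action estimate. The paper notes that $\frac{d}{dr}\bigl(f_{\vec{a}}(r)-rf'_{\vec{a}}(r)\bigr)=-rf''_{\vec{a}}(r)$ and integrates from the point $r_1$ where $s'=R/2$ (so the integrand has a fixed sign throughout $[r_1,r_0]$) to conclude $\mathcal{A}\leq a_j$ directly; this works for every $s'_*\in(R/2,3R/4)$, so the existence threshold alone determines $A$. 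You instead substitute, reduce to the pointwise inequality $\frac{1-h(s'_*)}{|h'(s'_*)|}\leq s'_*+R$, weaken it to $R|h'(s'_*)|\geq 1-h(s'_*)$, and then verify that near $s'_*=R/2$ via a Taylor expansion — which obliges you to enlarge $A$ so that $s'_*$ is trapped in that one-sided neighborhood. Both are valid (the proposition only needs \emph{some} $A$), but it is worth noting that your reduction actually proves the full inequality everywhere on $(R/2,3R/4)$: writing $1-h(s'_*)=\int_{R/2}^{s'_*}|h'(s)|\,ds\leq (s'_*-R/2)|h'(s'_*)|$ (since $|h'|$ is increasing there) immediately gives $\frac{1-h(s'_*)}{|h'(s'_*)|}\leq s'_*-R/2<s'_*+R$, so the Taylor step and the enlargement of $A$ are avoidable. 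With that observation your argument becomes essentially a rephrasing of the paper's FTC calculation.
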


\begin{proof}  Let $l_k$ denote the minimal length of an index-$k$ geodesic from $Q$ to $x_1$ in class $\frak{c}$ (such exists by Assumption \ref{assm}(i)).  Let $A=-\frac{l_k}{2h'(3R/4)}$ (note that it follows from the construction of $h$ that $h'(s)$ attains its minimal value, which is negative, at $s=3R/4$).  Thus if $b>2A$, then there is $s\in (R/2,3R/4)$ such that $bh'(s)=-l_k$.  

If $\min_i a_i<-A$, choose $j$ so that $a_j=\min_i a_i$.  For each $r\in [2^{-(j+1)}R,2^{-j}R]$ we have $f'_{\vec{a}}(r)=2^{j+1}a_j h'(2^{j+1}r-R)$.  So since $-2^{j+1}a_j>2A$ there is $r_0$ such that $2^{j+1}r_0-R\in (R/2,3R/4)$ and $f_{\vec{a}}'(r_0)=l_k$.  Since $a_j<0$ and, by the construction of $h$, $h''$ is negative on $(R/2,3R/4)$,  we have $f_{\vec{a}}''(r_0)>0$.  

Since $f_{\vec{a}}'(r_0)=l_k$ is the length of a Morse index-$k$ geodesic from $x_0$ to $x_1$ in the class $\frak{c}$, there is a corresponding element $(r_0,y)\in \mathcal{C}_{\frak{c}}(f_{\vec{a}},Q,x_1)$.  Since $f_{\vec{a}}'(r_0)$ and $f_{\vec{a}}''(r_0)$ are both 
positive we have $\mu(\gamma_{(r_0,y)})=d-k$ by Proposition \ref{gradecompQ}.  As for the action, let $r_1=\frac{3}{4}2^{-j}R$, so that $2^{j+1}r_1-R=\frac{R}{2}$ and so $f_{\vec{a}}(r_1)=a_j$, $f_{\vec{a}}'(r_1)=0$ and $f_{\vec{a}}''$ is positive on $[r_1,r_0]$.  Hence \begin{align*}
\mathcal{A}_{H_{f_{\vec{a}}}}(\gamma_{(r_0,y)})&=f_{\vec{a}}(r_0)-r_0f_{\vec{a}}'(r_0)
\\&=f_{\vec{a}}(r_1)+\int_{r_1}^{r_0}\frac{d}{dr}(f_{\vec{a}}(r)-rf'_{\vec{a}}(r))dr 
\\&=a_j-\int_{r_1}^{r_0}rf_{\vec{a}}''(r)dr\leq a_j=\min_i a_i \end{align*}
This proves (i)

The proof of (ii) is essentially the same: choose $j$ so that $a_j=\max_ia_i>A$, and then there is $r_0$ with $2^{j+1}r_0-R\in (R/2,3R/4)$ such that $f_{\vec{a}}'(r_0)=2^{j+1}a_jg'(2^{j+1}r_0-R)=-l_k$.  Moreover there is a corresponding $(r_0,y)\in \mathcal{C}_{\frak{c}}(f_{\vec{a}},Q,x_1)$, which will have $\mu(\gamma_{(r_0,y)})=n+k$ since $f_{\vec{a}}'(r_0),f_{\vec{a}}''(r_0)<0$, and will have $\mathcal{A}_{H_{f_{\vec{a}}}}(\gamma_{(r_0,y)})\geq a_j$ by a similar calculation to that above.
\end{proof}

This quickly leads to the key estimate of the boundary depth $B$ as defined in (\ref{Bdef}).

\begin{prop}\label{best} Under Assumption \ref{assm} there is a constant $C$ such that, for all $\vec{a}\in\R^{\infty}$, \begin{equation}\label{keyest} B\left(\nu^*Q,(\phi_{H_{f_{\vec{a}}}}^{1})^{-1}(T_{x_1}^{*}N)\right)\geq \|\vec{a}\|_{\infty}-C \end{equation}
\end{prop}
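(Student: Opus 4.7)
The plan is to exploit the triviality of the Floer homology $HF_{\frak{c}}(\nu^*Q,T_{x_1}^{*}N;H_{f_{\vec{a}}})$, which holds because $\nu^*Q\cap T_{x_1}^{*}N=\varnothing$ and $H_{f_{\vec{a}}}$ is compactly supported, in order to convert the generator-existence statements in Proposition \ref{deep} into boundary-depth estimates via the action bounds of Proposition \ref{k-C}.  Specifically, I will prove separate lower bounds $\beta_{\frak{c},d-k}(H_{f_{\vec{a}}})\geq -\min_i a_i-C_0$ and $\beta_{\frak{c},n+k-1}(H_{f_{\vec{a}}})\geq \max_i a_i-C_0$, under appropriate largeness assumptions on $-\min_ia_i$ and $\max_ia_i$ respectively.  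Since $B$ is the supremum of the $\beta_{\frak{c}',k'}$, taking the larger of these will yield (\ref{keyest}) with $C=\max(A,C_0)$, with the corner case $\|\vec{a}\|_{\infty}\leq \max(A,C_0)$ being trivial from $B\geq 0$.  Throughout I may assume transversality $\nu^*Q\pitchfork (\phi_{H_{f_{\vec{a}}}}^{1})^{-1}(T_{x_1}^{*}N)$; the general case follows from (\ref{betacont}) by approximating $f_{\vec{a}}$ with nearby functions satisfying (\ref{transcond}).

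For the first bound, assume $\min_ia_i<-\max(A,C_0)$ and let $c_0\in CF_{\frak{c},d-k}$ be the single-generator chain from Proposition \ref{deep}(i), which has $\ell(c_0)\leq\min_ia_i$.  By Proposition \ref{k-C}(i), every generator of $CF_{\frak{c},d-k-1}$ has action $\geq -C_0$, so any nonzero $w\in CF_{\frak{c},d-k-1}$ satisfies $\ell(w)\geq -C_0>\min_ia_i\geq \ell(c_0)\geq \ell(\partial c_0)$; consequently $\partial c_0=0$.  Vanishing of Floer homology then provides $e\in CF_{\frak{c},d-k+1}$ with $\partial e=c_0$, and a second application of Proposition \ref{k-C}(i) (now in grading $d-k+1$) gives $\ell(e)\geq -C_0$.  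By the definition of $\beta_{\frak{c},d-k}$, such a primitive may be chosen with $\ell(e)\leq \ell(c_0)+\beta_{\frak{c},d-k}$, so $-C_0\leq \min_ia_i+\beta_{\frak{c},d-k}$, yielding the first bound.

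The second bound requires a dualized argument, since the analogous generator in grading $n+k$ is typically \emph{not} a cycle.  Assume $\max_ia_i>\max(A,C_0)$ and let $c_1\in CF_{\frak{c},n+k}$ be the generator from Proposition \ref{deep}(ii), so $\ell(c_1)\geq \max_ia_i$.  Proposition \ref{k-C}(ii) bounds every generator of $CF_{\frak{c},n+k+1}$ by action $\leq C_0<\max_ia_i\leq \ell(c_1)$, so if $c_1=\partial e$ for some $e\in CF_{\frak{c},n+k+1}$ we would have $\ell(c_1)\leq \ell(e)\leq C_0$, a contradiction; hence $c_1\notin \mathrm{Im}\,\partial$, and vanishing of Floer homology forces $\partial c_1\neq 0$.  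Thus $\partial c_1\in CF_{\frak{c},n+k-1}\cap \mathrm{Im}\,\partial$ is a nontrivial boundary with $\ell(\partial c_1)\leq C_0$ (again by Proposition \ref{k-C}(ii)).  Setting $\beta:=\beta_{\frak{c},n+k-1}$, the definition of boundary depth produces some $c_1'\in CF_{\frak{c},n+k}$ with $\partial c_1'=\partial c_1$ and $\ell(c_1')\leq C_0+\beta$; since $c_1-c_1'$ is then a cycle, it equals $\partial e$ for some $e\in CF_{\frak{c},n+k+1}$, and Proposition \ref{k-C}(ii) gives $\ell(\partial e)\leq \ell(e)\leq C_0$.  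Therefore $\max_ia_i\leq \ell(c_1)\leq \max(\ell(c_1'),\ell(\partial e))\leq C_0+\beta$, completing the second bound.  The main obstacle is precisely this dualization: the parity constraints of Assumption \ref{assm} force $c_1$ not to be a cycle, and the argument succeeds only by using $c_1$ as a high-action witness that the genuine boundary $\partial c_1$ admits no low-action primitive.
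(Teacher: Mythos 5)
Your proof is correct.  The first half (the case $\|\vec{a}\|_\infty=-\min_ia_i$) is essentially the same as the paper's: produce the low-action generator in grading $d-k$ via Proposition~\ref{deep}(i), show it is a cycle because Proposition~\ref{k-C}(i) leaves nothing in grading $d-k-1$ at that filtration level, use vanishing of Floer homology to exhibit a primitive, and lower-bound its filtration via Proposition~\ref{k-C}(i) again.  The second half (the case $\|\vec{a}\|_\infty=\max_ia_i$) is where you diverge.  The paper invokes \cite[Corollary 1.4]{U10}, which identifies the boundary depth of the complex with that of its opposite (dual) complex, and then runs the same argument as in the first case in the dual, where the high-action generator of Proposition~\ref{deep}(ii) \emph{does} become a cycle.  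You instead argue directly in the original complex: you observe that the high-action generator $c_1$ cannot lie in $Im\,\partial$ (no primitive in grading $n+k+1$ has action as large as $\ell(c_1)$), hence by vanishing of homology $\partial c_1\neq 0$; then $\partial c_1$ is a low-action boundary in grading $n+k-1$, and any $c_1'$ with $\partial c_1'=\partial c_1$ and $\ell(c_1')\leq C_0+\beta_{\frak{c},n+k-1}$ can be corrected to $c_1$ by adding an exact term $\partial e$ whose filtration is also $\leq C_0$, forcing $\ell(c_1)\leq C_0+\beta_{\frak{c},n+k-1}$.  This chain-level argument is a bit more intricate, but it is self-contained and avoids the external duality input from \cite{U10}; conversely, the paper's route is conceptually tidier in that it reduces the second case to the first by a formal symmetry, and it sets up the opposite-complex machinery which is reused shortly afterward in the proof of Corollary~\ref{maincor}.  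One minor point of hygiene: the boundary depth is an infimum, so where you write $\ell(e)\leq\ell(c_0)+\beta_{\frak{c},d-k}$ and $\ell(c_1')\leq C_0+\beta$ you should interpolate an $\epsilon>0$ and pass to the limit at the end; this does not affect the conclusion.
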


\begin{proof} By construction, each of the functions $f_{\vec{a}}'$ have only finitely many critical values.  From this (and the fact that $x_1$ is not a focal point of $Q$, so that there are only countably many geodesics from $Q$ to $x_1$) it is easy to see that, for any $\vec{b}\in \R^{\infty}$, all but countably many $\lambda\in \R$ have the property that the transversality condition (\ref{transcond}) holds for the function $H_{f_{\lambda\vec{b}}}=H_{\lambda f_{\vec{b}}}$.  So using the continuity property (\ref{betacont}) of the boundary depth it suffices to prove the proposition for those $\vec{a}\in \R^{\infty}$ such that $\nu^*Q\pitchfork (\phi_{H_{f_{\vec{a}}}}^{1})^{-1}(T^{*}_{x_1}N)$ (so that in particular the Floer complex $CF(\nu^*Q,T_{x_1}^{*}N;H_{f_{\vec{a}}})$ is well-defined).

 Let $C$ be the maximum of the constant $C_0$ from Proposition \ref{k-C} and the constant $A$ from Proposition \ref{deep}.  
Then (\ref{keyest}) is trivial if $\|\vec{a}\|_{\infty}\leq C$ (since $B$ is by definition always nonnegative), so we may assume that $\|\vec{a}\|_{\infty}> C$.

Now either $\|\vec{a}\|_{\infty}=-\min_i a_i$ or $\|\vec{a}\|_{\infty}=\max_i a_i$.  In the first case, we have $\min_i a_i<-C\leq -A$, So by Proposition \ref{deep}(i) there is a generator $\gamma_{(r,y)}$ for $CF_{\frak{c},d-k}(\nu^*Q,T_{x_1}^{*}N;H_{f_{\vec{a}}})$ with filtration level at most $-\|\vec{a}\|_{\infty}$.  But by Proposition \ref{k-C}(i), there are no generators of $CF_{\frak{c},d-k-1}(\nu^*Q,T_{x_1}^{*}N;H_{f_{\vec{a}}})$  having action less than or equal to $-C$.  This implies first that $\partial_{\mathbb{J}}\gamma_{(r,y)}=0$ (where $\partial_{\mathbb{J}}$ is the Floer differential), since $\partial_{\mathbb{J}}$ maps $CF_{\frak{c},d-k}$ to $CF_{\frak{c},d-k-1}$ and lowers filtration level, and the filtration level of $\gamma_{(r,y)}$ is at most $-\|\vec{a}\|_{\infty}< -C$.  But the homology of the chain complex $(CF_{\frak{c},d-k}(\nu^*Q,T_{x_1}^{*}N;H_{f_{\vec{a}}}),\partial_{\mathbb{J}})$ is zero (it is independent of the choice of compactly supported Hamiltonian $H$, and clearly vanishes when $H=0$ since $\nu^*Q\cap T_{x_1}^{*}N=\varnothing$).  So the fact that $\partial_{\mathbb{J}}\gamma_{(r,y)}=0$ implies that \[ \gamma_{(r,y)}\in Im\left(\partial_{\mathbb{J}}\co CF_{\frak{c},d-k+1}(\nu^*Q,T_{x_1}^{*}N;H_{f_{\vec{a}}})\to CF_{\frak{c},d-k}(\nu^*Q,T_{x_1}^{*}N;H_{f_{\vec{a}}})\right) \]  

By another application of Proposition \ref{k-C}(i), all nonzero elements of $CF_{\frak{c},d-k+1}(\nu^*Q,T_{x_1}^{*}N;H_{f_{\vec{a}}})$ have filtration level at least $-C$.  Thus $\gamma_{(r,y)}$ is an element of the image of the boundary operator having filtration level at most $-\|\vec{a}\|_{\infty}$, all of whose primitives have filtration level at least $-C$.  This proves (\ref{keyest}) in the case that $\|\vec{a}\|_{\infty}=-\min_i a_i$.

There remains the case that $\|\vec{a}\|_{\infty}=\max_i a_i$.  This case becomes essentially identical to the other one after we appeal to \cite[Corollary 1.4]{U10}, which shows that the boundary depth of the Floer complex $CF_{\frak{c}}(\nu^*Q,T_{x_1}^{*}N;H_{f_{\vec{a}}})$
is the same as that of its ``opposite complex,'' \emph{i.e.} of the filtered chain complex obtained from   $CF_{\frak{c}}(\nu^*Q,T_{x_1}^{*}N;H_{f_{\vec{a}}})$ by negating the grading, using the opposite filtration \[ \ell\left(\sum b_i\gamma_i\right)=\max\{-\mathcal{A}_{H_{f_{\vec{a}}}}(\gamma_i)|b_i\neq 0\}, \] and using as boundary operator the transpose of the original boundary operator $\partial_{\mathbb{J}}$.  In this opposite complex, the generator $\gamma_{(r,y)}$ provided by Proposition \ref{deep}(ii) will have grading $-n-k$ and filtration level at most $-\max_i a_i=-\|\vec{a}\|_{\infty}$, while Proposition \ref{k-C}(ii) shows that $\gamma_{(r,y)}$ is a cycle (and hence a boundary) in the opposite complex (since all index-$(-n-k-1)$ generators have higher filtration level), and that all primitives of $\gamma_{(r,y)}$ have filtration level at least $-C$.  Thus in the case that
$\|\vec{a}\|_{\infty}=\max_i a_i$ we again obtain (\ref{keyest}).
\end{proof}

The $k=0$ case of the following corollary  immediately implies Theorem \ref{mainlag}:

\begin{cor}\label{maincor}
Let the Riemannian manifold $(N,g)$, the submanifold $Q$, and the point $x_1\in N\setminus Q$ which is not a focal point of $Q$ obey Assumption \ref{assm}.  Let $U$ be a neighborhood of $0_N$, choose $R>0$ sufficiently small that the radius-$R$ disk bundle in $T^*N$ is contained in $U$, and construct the functions $f_{\vec{a}}$ as at the start of Section \ref{compute}.  Then the homomorphism $F\co \R^{\infty}\to C^{\infty}(T^*N)$ defined by $F(\vec{a})=H_{f_{\vec{a}}}$ satisfies the following properties, for all $\vec{a},\vec{b}\in \R^{\infty}$:\begin{itemize}
\item[(i)] $\phi_{F(\vec{a}+\vec{b})}^{1}=\phi_{F(\vec{a})}^{1}\circ \phi_{F(\vec{b})}^{1}$
\item[(ii)] For some constant $C$ independent of $\vec{a}$ and $\vec{b}$, \[ \delta\left(\phi_{F(\vec{a})}^{1}(T_{x_1}^{*}N),\phi_{F(\vec{b})}^{1}(T_{x_1}^{*}N)\right)\geq \|\vec{a}-\vec{b}\|_{\infty}-C \] and \[ 
\delta\left(\phi_{F(\vec{a})}^{1}(\nu^*Q),\phi_{F(\vec{b})}^{1}(\nu^*Q)\right)\geq \|\vec{a}-\vec{b}\|_{\infty}-C \] where $\delta$ denotes the Hofer distance on $\mathcal{L}(T_{x_1}^{*}N)$ or on $\mathcal{L}(\nu^*Q)$, respectively.\end{itemize}
\end{cor}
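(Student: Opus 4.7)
The plan for \textbf{(i)} is to use the explicit description of the flow of $H_{f_{\vec{a}}}$ given in \eqref{hflow}. Because each $F(\vec{a}) = H_{f_{\vec{a}}}$ is autonomous and fiberwise-radial, its time-one map is the identity on $0_N$ and acts on $T^*N \setminus 0_N \cong (0,\infty) \times S^*N$ as $(r,z) \mapsto (r, \psi_{f'_{\vec{a}}(r)}(z))$, where $\{\psi_s\}$ is the geodesic flow viewed as a one-parameter group. Composing two such maps adds the parameters of the geodesic flow, so $\phi_{F(\vec{a})}^1 \circ \phi_{F(\vec{b})}^1$ acts radius-preservingly as $(r,z)\mapsto (r,\psi_{f'_{\vec{a}}(r) + f'_{\vec{b}}(r)}(z))$, and by the evident $\R$-linearity of the assignment $\vec{a} \mapsto f_{\vec{a}}$ this agrees with $\phi_{F(\vec{a}+\vec{b})}^1$.

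For the first estimate of \textbf{(ii)}, I will use (i) together with the $Ham$-invariance of $\delta$ to reduce to estimating
\[ \delta(T_{x_1}^*N, \phi_{F(\vec{c})}^1 T_{x_1}^*N), \qquad \vec{c} = \vec{b}-\vec{a}, \]
and then apply Corollary \ref{lip} with $\hat L_0 = \nu^*Q$, $\hat L_1 = T_{x_1}^*N$ to obtain
\[ \delta(T_{x_1}^*N, \phi_{F(\vec{c})}^1 T_{x_1}^*N) \geq |B(\nu^*Q, T_{x_1}^*N) - B(\nu^*Q, \phi_{F(\vec{c})}^1 T_{x_1}^*N)|. \]
The first term on the right vanishes because $\nu^*Q \cap T_{x_1}^*N = \varnothing$, so the Floer complex for the zero Hamiltonian is trivial and the boundary depth is $0$. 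For the second term I will exploit the autonomy of $H_{f_{\vec{c}}}$, which gives $(\phi_{F(\vec{c})}^1)^{-1} = \phi_{F(-\vec{c})}^1$, and invoke Proposition \ref{best} (applied to $-\vec{c}$) to deduce $B(\nu^*Q, \phi_{F(\vec{c})}^1 T_{x_1}^*N) \geq \|\vec{c}\|_\infty - C = \|\vec{a}-\vec{b}\|_\infty - C$.

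For the second estimate of \textbf{(ii)}, the same reduction yields $\delta(\nu^*Q, \phi_{F(\vec{c})}^1 \nu^*Q)$, but applying Corollary \ref{lip} now requires analyzing $B(T_{x_1}^*N, \phi_{F(\vec{c})}^1 \nu^*Q)$, which is not directly covered by Proposition \ref{best}. The key step will be a swap duality: the reflection $(s,t) \mapsto (-s, 1-t)$ on the Floer cylinder $\R\times [0,1]$ intertwines Floer trajectories for the pair $(\nu^*Q, T_{x_1}^*N)$ with autonomous Hamiltonian $H_{f_{-\vec{c}}}$ and Floer trajectories for $(T_{x_1}^*N, \nu^*Q)$ with Hamiltonian $H_{f_{\vec{c}}}$, yielding an isomorphism of filtered chain complexes in which the filtration is reversed in sign. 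Since the boundary depth is invariant under passage to the opposite complex (see \cite[Corollary 1.4]{U10}), this will give
\[ B(T_{x_1}^*N, \phi_{F(\vec{c})}^1 \nu^*Q) = B(\nu^*Q, \phi_{F(-\vec{c})}^1 T_{x_1}^*N) \geq \|\vec{c}\|_\infty - C \]
by Proposition \ref{best}, and combining with Corollary \ref{lip} and the observation that $B(T_{x_1}^*N, \nu^*Q) = 0$ finishes the argument.

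The main obstacle will be establishing this swap duality rigorously: although the reflection argument is standard in Lagrangian Floer theory, it must be verified that it preserves Fredholm regularity of the trajectories, correctly matches generators and gradings, and converts the filtration on one side into the opposite filtration on the other, so that \cite[Corollary 1.4]{U10} truly applies. Once this is in place, the remainder of the proof is direct bookkeeping from Corollary \ref{lip} and Proposition \ref{best}.
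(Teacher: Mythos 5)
Your proposal is correct and follows essentially the same route as the paper.  Part (i) is exactly the observation made in the paper (linearity of $\vec a\mapsto f_{\vec a}$ plus formula (\ref{hflow})).  For (ii), the first estimate is identical: reduce by $Ham$-invariance to $\delta(T_{x_1}^*N,\phi_{F(\vec b-\vec a)}^1 T_{x_1}^*N)$, use Corollary~\ref{lip} with $\hat L_0=\nu^*Q$, note $B(\nu^*Q,T_{x_1}^*N)=0$ because the Floer complex for $H=0$ is the zero complex, and then invoke Proposition~\ref{best}.  For the second estimate you correctly identify that one must compare $B(T_{x_1}^*N,\phi_{F(\vec b-\vec a)}^1\nu^*Q)$ with $B(\nu^*Q,\phi_{F(\vec a-\vec b)}^1 T_{x_1}^*N)$ via a swap duality and then apply \cite[Corollary 1.4]{U10}; the paper does exactly this, asserting that $CF(T_{x_1}^*N,\nu^*Q;F(\vec a))$ is naturally filtered-isomorphic (up to grading shift) to the opposite complex of $CF(\nu^*Q,T_{x_1}^*N;F(-\vec a))$ and then applying the invariance of boundary depth under passage to the opposite complex.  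Your sketch of the reflection $(s,t)\mapsto(-s,1-t)$ is the right mechanism; the one small thing to be careful about, which you correctly flag, is that the reflected curve satisfies the Floer equation with $-J_{1-t}$ rather than a compatible almost complex structure, which is precisely why one lands in the \emph{opposite} complex (with transposed differential and sign-reversed filtration) rather than an ordinary Floer complex -- this is exactly the form in which \cite[Corollary 1.4]{U10} applies, so the concern is resolved by that citation.
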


\begin{proof}
It is immediate from the definition of $f_{\vec{a}}$ that $f_{\vec{a}+\vec{b}}=f_{\vec{a}}+f_{\vec{b}}$; thus $F$ is indeed a homomorphism and the formula (\ref{hflow}) for the Hamiltonian flows of the functions $H_f$ directly implies (i).  As for (ii), note that for any Lagrangian submanifold $\Lambda$, the $Ham$-invariance of the Hofer distance $\delta$, together with (i), shows that we have \[ 
\delta(\phi_{F(\vec{a})}^{1}(\Lambda),\phi_{F(\vec{b})}^{1}(\Lambda))=\delta(\Lambda,(\phi_{F(\vec{a}-\vec{b})}^{1})^{-1}(\Lambda)).\]  

Now $B(\nu^*Q,T_{x_1}^{*}N)=0$ since the Floer complex $CF(\nu^*Q,T_{x_1}^{*}N;0)$ vanishes.  Hence by Corollaries \ref{lip} and \ref{best},
\[ 
\delta(T_{x_1}^{*}N,(\phi_{F(\vec{a}-\vec{b})}^{1})^{-1}(T_{x_1}^{*}N))\geq B(\nu^*Q,(\phi_{F(\vec{a}-\vec{b})}^{1})^{-1}(T_{x_1}^{*}N))\geq \|\vec{a}-\vec{b}\|_{\infty}-C,\] proving the first line of (ii).  As for the second line of (ii), note that for all $\vec{a}$ the Floer complex $CF(T_{x_1}^{*}Q,\nu^*Q;F(\vec{a}))$ is naturally isomorphic (modulo a shift of grading that results from our normalization convention) as a filtered chain complex to the opposite complex of   $CF(\nu^*Q,T_{x_1}^{*}N;F(-\vec{a}))$ (in the sense of \cite{U10} and of the last paragraph of Proposition \ref{best}).  So since by \cite[Corollary 1.4]{U10} the boundary depth is unchanged when we pass to the opposite complex, we obtain \begin{align*} \delta(\nu^*Q,(\phi_{F(\vec{a}-\vec{b})}^{1})^{-1}(\nu^*Q))&\geq \left|B(T_{x_1}^{*}N,(\phi_{F(\vec{a}-\vec{b})}^{1})^{-1}(\nu^*Q))-B(T_{x_1}^{*}N,\nu^*Q)\right|
\\& =B\left(T_{x_1}^{*}N,(\phi_{F(\vec{a}-\vec{b})}^{1})^{-1}(\nu^*Q)\right)\geq \|\vec{a}-\vec{b}\|_{\infty}-C.\end{align*}
\end{proof}

\end{document}